\newtheorem{thm}{Theorem}[section]
\newtheorem{cor}[thm]{Corollary}
\newtheorem{lem}[thm]{Lemma}
\theoremstyle{definition}
\newtheorem{defn}[thm]{Definition}
\newtheorem{exam}[thm]{Example}
\newtheorem{rem}[thm]{Remark}
\numberwithin{equation}{section}
\newcommand{\norm}[1]{\left\Vert#1\right\Vert}
\newcommand{\abs}[1]{\left\vert#1\right\vert}
\newcommand{\B}{\mathcal{B}}
\newcommand*\HH{\mathcal{H}} %analytic maps
\newcommand{\field}[1]{\mathbb{#1}}
\newcommand{\C}{\field{C}}
\newcommand{\D}{\field{D}}
\newcommand{\HOLO}{\mathcal H}
\newcommand*\closed[1]{\overline{#1}}
\newcommand*\LNOP[1]{L_{#1}}
\newcommand{\BOAFW}{u} %weight
\newcommand{\BOAF}{H^\infty_\BOAFW}
\newcommand{\BOAFS}{H^0_\BOAFW}
\begin{document}
\title[Exact essential norm ...]{Exact essential norm of generalized  Hilbert matrix operators on classical analytic function spaces}

\author[Lindstr{\"o}m] {M. Lindstr\"om}\address{Mikael Lindstr{\"o}m. Department of Mathematics, \AA bo Akademi University. FI-20500 \AA bo, Finland. \emph{e}.mail: mikael.lindstrom@abo.fi} 
\author[Miihkinen] {S.Miikinen}\address{Santeri Miihkinen. Department of Mathematics and Computer Science, Karlstad University. SE-651 88 Karlstad, Sweden. \emph{e}.mail: miihkine@gmail.com}
\author[Norrbo] {D. Norrbo}\address{David Norrbo. Department of Mathematics, \AA bo Akademi University. FI-20500 \AA bo, Finland.
\emph{e}.mail: david.norrbo@abo.fi}
\subjclass[2010]{Primary 47B38, Secondary 30H20.}
\keywords{Hilbert matrix operator, essential norm, weighted composition operator, weighted Bergman spaces,  weighted Banach spaces of analytic functions}

\begin{abstract}
We compute the exact value of the essential norm of a generalized Hilbert matrix operator acting on weighted Bergman spaces $A^p_v$ and weighted Banach spaces $H^\infty_v$ of analytic functions, where $v$ is a general radial weight.  
In particular, we obtain the exact value of the essential norm of the classical Hilbert matrix operator on standard weighted Bergman spaces $A^p_\alpha$ for $p>2+\alpha, \, \alpha \ge 0,$ and on Korenblum spaces $H^\infty_\alpha$ for $0 < \alpha < 1.$ We also cover the Hardy space $H^p, \,  1 < p < \infty,$ case. In the weighted Bergman space case, the essential norm of the Hilbert matrix is equal to the conjectured value of its operator norm and similarly in the Hardy space case the essential norm and the operator norm coincide. We also compute the exact value of the norm of the Hilbert matrix on $H^\infty_{w_\alpha}$ with weights $w_\alpha(z)=(1-|z|)^\alpha$ for all $0 < \alpha < 1$. Also in this case, the values of the norm and essential norm coincide.

\end{abstract}

\maketitle
\section{\bf{Introduction}}

In recent years, a significant interest has arisen to compute the exact norm of the Hilbert matrix operator $\mathcal{H}$ on classical spaces of analytic functions on the open unit  disk, such as Hardy spaces, weighted Bergman spaces and the Korenblum spaces, see \cite{DJV}, \cite{BMBK}, \cite{ BK2}, \cite{BK3}, \cite{LMW} and \cite{LMW2}. A central tool in determining the norm of $\mathcal{H}$ on these spaces is an integral representation of $\mathcal{H}$ in terms of certain weighted composition operators established by Diamantopoulos and Siskakis  in \cite{DS}. We generalize this approach by considering an integral operator $I_K$ with a kernel $K$ satisfying some natural conditions and establishing its exact essential norm on weighted Bergman spaces and weighted Banach spaces of analytic functions. The Hilbert matrix $\mathcal{H}$ is a prime example of this generalized Hilbert matrix operator $I_K$. In consequence of this, our results for the exact essential norm of the operator $I_K$ produce the exact essential norm of the Hilbert matrix on the mentioned spaces.

A concise history of the previous work on computation of the norm of $\mathcal{H}$ acting on weighted Bergman spaces $A^p_\alpha$ is as follows. Diamantopoulos  \cite{D1} was the first one to study the boundedness  of $\mathcal  H$ on Bergman spaces and, in \cite{BMBK},  Bo\v{z}in and Karapetrovi\'{c}  proved the conjecture on the exact norm  of $\mathcal{H}$  on $A^p$. The study of boundedness of $\mathcal{H}$ on $A^p_\alpha$ was initiated and some partial results were obtained in \cite{GGPS}.  In \cite{BK2}, Karapetrovi\'{c} determined the  exact norm of $\mathcal{H}$ on $A^p_\alpha$ in the case $4\leq 2(2+\alpha) \leq p < \infty$. However, he obtained that the same lower bound holds for all $p> 2+\alpha> 1.$ Karapetrovi\'{c}  conjectured in \cite{BK2} that the upper bound for the norm of $\mathcal{H}$ is the same as above also in the case 
$1 < 2+\alpha < p < 2(2+\alpha)$. In \cite{LMW2} Lindstr\"om, Miihkinen and Wikman confirmed the conjecture in the positive for $2+\alpha +\sqrt{\alpha^2+\frac{7}{2}\alpha +3} \le p < 2(2+\alpha).$  Very recently Karapetrovic  \cite{BK3}  generalized the work of   \cite{LMW2} by showing  that the conjecture holds  for 
$2+\alpha +\sqrt{(2 + \alpha)^2-(\sqrt {2} - \frac{1}{2})(2 +\alpha)} \le p < 2(2+\alpha)$, which is an improvement. Regarding the Korenblum spaces $H^\infty_\alpha$, $0 < \alpha < 1$, the exact norm of $\mathcal{H}$ was computed for small values of $\alpha$ and an upper estimate was established for large values of $\alpha$ in \cite{LMW}. See also \cite{AMS}.

The aim of this paper is to find the exact value of the essential norm  of the Hilbert matrix operator on weighted  Bergman spaces $A^p_\alpha$ for all values of $p>2+\alpha, \, \alpha \ge 0,$ and on the Korenblum spaces $H^\infty_\alpha$ and $H^0_\alpha$ for  $ 0 < \alpha < 1$. We also cover the Hardy space $H^p, \, 1 < p < \infty,$ case. Our approach to this task is quite general. Namely, we consider an integral operator defined on weighted Bergman spaces $A^p_v$ and weighted Banach spaces $\BOAF$ of analytic functions. The kernel of this operator satisfies some general and natural conditions and the operator can be considered a generalized version of the classical Hilbert matrix operator through an integral representation via certain weighted composition operators. We establish the exact value of the essential norm of this integral operator acting on the mentioned spaces.

The organization of the article  is the following. After recalling the relevant analytic function spaces and notions, we  introduce the generalized Hilbert matrix integral operator $I_K$ in Section \ref{sec:Prelim}. Then we move on to present some important  lemmas in Section \ref{sec:SomeUsefulResults}. In Section \ref{sec:UpperBound} upper bounds for the essential norms of $I_K$ on weighted Bergman spaces $A^p_v$ and weighted Banach spaces of analytic functions $\BOAF$ are derived. Sections \ref{sec:LowerBoundOfIKOnAp} and \ref{sec:LowerBoundOfIKOnHInfty} contain derivations of lower bounds of essential norms of $I_K$ on weighted Bergman spaces $A^p_v$ and $\BOAF$ respectively. Section \ref{sec:Examples} presents central examples of our results. Namely, exact values of essential norms of the classical Hilbert matrix operator are derived in cases of the standard weighted Bergman spaces $A^p_\alpha$, the Korenblum spaces $H^\infty_\alpha$ (and $H^0_\alpha$) and the Hardy spaces $H^p$. Section \ref{sec:EssNormOfWCompOp}
 consists of results related to essential norm for a class of weighted composition operators. In Section \ref{sec:InterestingID} we derive a general formula describing the essential norm of the operator $I_K$ on spaces $A^p_\alpha$ and $H^\infty_\alpha$ as an integral average of essential norms of certain weighted composition operators. We determine the exact value of the norm of $\HH$ acting on $H^\infty_{w_\alpha}$ spaces with weights $w_\alpha(z) = (1-|z|)^\alpha$ and it is shown to coincide with the essential norm. Finally, we state that the pair $(A^p_v,A^q_v),\ q\geq p>1,$ has the weak maximizing property (see also \cite{AGPET} and \cite{PT}) and point out why this might be useful in determining the norm of $\HH$ on $A^p_\alpha, \, p-2>\alpha > -1$.

\section{Preliminaries}\label{sec:Prelim}

The space of analytic functions on the open unit disk  $\mathbb{D}$ in the complex plane $\mathbb{C}$ is denoted by $\mathcal{H}(\D)$. Every analytic self-map $\varphi \colon \D \to \D$ induces a \emph{composition operator} $C_{\varphi}f = f \circ \varphi$ on $\mathcal{H}(\mathbb{D})$. If furthermore $\psi \in \mathcal{H}(\mathbb{D})$, then one can define a \emph{weighted composition operator} $\psi C_{\varphi}(f) = \psi\cdot(f \circ \varphi)$. For general information on composition operators on classical spaces of analytic functions, the reader is referred to the excellent monographs by Cowen and MacCluer \cite{CM} and Shapiro \cite{S}.

In this paper, we consider three spaces, the \emph{weighted Bergman spaces}  $ A^p_v,\ 1<p<\infty$, the \emph{weighted Banach spaces of analytic functions} $\BOAF$ and the \emph{Hardy spaces} $H^p,\ 1<p<\infty$. 

Let $v:\D\to\mathbb R_{> 0}$ be a continuous function satisfying $v(z)=v(\abs{z}), \ z\in\D$. We define the weighted Bergman spaces $A^p_v$ as the set
\[
\left\{f\in \HOLO (\D) :  \int_{\D}  \abs{f(z)}^p v(z)   \frac{dA(z)}{\pi} < \infty \right\}
\]
equipped with its natural norm,
\[
\norm{f}_{A^p_v}: =  \left( \int_{\D}  \abs{f(z)}^p v(z)   \frac{dA(z)}{\pi}\right)^\frac{1}{p},
\]
where $dA=dxdy$ is the Lebesgue area measure. When $f$ is measurable, not necessarily analytic, we will use $\norm{f}_{L^p_v}$ for the norm expression above. We will, furthermore, assume that the weight function $v$ for a weighted Bergman space is normalized, that is,
\[
 \int_{\D} v(z) \frac{dA(z)}{\pi}=1.
\]

The notation $A^p_\alpha, \ \alpha>-1,$ is used to denote the space $A^p_v$ with $v(z)=(1+\alpha)(1-\abs{z}^2)^\alpha$ and $dA_\alpha(z)=(1+\alpha)(1-\abs{z}^2)^\alpha \frac{dA(z)}{\pi}$.

We also define the weighted Banach spaces of analytic functions $H^\infty_v$ as the set
\[
\{f\in \HOLO (\D) : \norm{f}_{H^\infty_v}: = \sup_{z\in\D} \abs{f(z)} v(z) < \infty \},
\]
where we have assumed that $v$ is normalized, that is, $\sup_{z\in\D} {v(z)} = 1$. For the same norm expression we use the notation $\norm{f}_{L^\infty_v}$, when $f$ is measurable but not necessarily analytic. If $v\equiv 1$ we use the shorter notations $H^\infty := H^\infty_v$ and $ L^\infty := L^\infty_v $. We will also use the notation $\norm{f}_\infty := \norm{f}_{H^\infty}$ in Section \ref{sec:EssNormOfWCompOp}.

We now introduce a subclass of weight functions, namely, those $v$ that tend to zero on the boundary of $\D$. An arbitrary weight of this class will be denoted $\BOAFW$. The notation $H^\infty_\alpha, \ \alpha> 0,$ is used to denote the space $\BOAF$ with $\BOAFW(z)=(1-\abs{z}^2)^\alpha$. We also define a subspace $\BOAFS$ consisting of functions $f\in \BOAF$ such that
\[
\lim_{\abs{z}\to 1} \abs{f(z)} \BOAFW(z)=0.
\]
%Concerning the main results regarding the essential norm for integral operators (Section \ref{sec:SomeUsefulResults} - \ref{sec:Examples} ) we will assume that $\BOAFW(1)=0$.

It will be assumed that for $X=A^p_v$ or $X=\BOAF$ the evaluation maps, $\delta_z$, $f\mapsto f(z), \ z\in\D,$ belong to $X^*$ and that $\int_0^1\norm{\delta_s}_{X^*} ds<\infty$. The last demand could be compared to the weaker demand $X\subset A^1_{[0,1)}$, where $A^1_{[0,1)}$ can be found in \cite{GGPS}. Moreover, the space
$A^p_v$ is assumed to be reflexive if $p>1$. Notice that $A^p_v$ is separable when the weight function is radial. 

The Hardy space $H^p$ for $1 \leq  p < \infty$ consists of all functions $f$ analytic in the unit disk such that
\begin{equation*}
\norm{f}_{H^p}^{p} := \sup_{0 \leq r < 1}\frac{1}{2\pi}\!\!\hspace*{0.1mm}\int_{0}^{2\pi}{|f(re^{i\theta})|^p d\theta} < \infty.
\end{equation*}

By ${\mathcal{ L}}(X,Y)$ we denote the space of all bounded linear operators  between   Banach spaces $X$ and $Y$. The \emph{essential norm} of $T\in {\mathcal{ L}}(X,Y)$  is defined to be the distance to the compact operators, that is
\begin{equation*}
\norm{T}_{e,X\to Y} = \inf\big\{\norm{T-L}_{X\to Y}:  \, L\in {\mathcal{ L}}(X,Y) \ \text{is compact}\big\}.
\end{equation*}
Notice that  $T\in {\mathcal{ L}}(X,Y)$ is compact if and only if $\norm{T}_{e,X\to Y}=0$. When $X=Y$, we write ${\mathcal{ L}}(X,X)={\mathcal{ L}}(X).$

\begin{defn}
Let $K:\D \times \D \to \C$ be analytic and $K(z,\cdot)\in H^\infty(\D)$ for every $z\in\D$. %, where $B(w,r)= \{z\in\mathbb C: \abs{z-w}<r\}, z\in\mathbb C , \ r> 0 $.
When we consider the operator
\[
I_K (f)(z) := \int_0^1 f(x) K(z,x)  dx,
\]
it will be assumed that it is a bounded operator, $X \to X$, where $X= A^p_v$ or $X= \BOAF$.
\end{defn}
Let
\[
T_t(z) :=  K(z,x_z(t))  x_z'(t), 
\]
where $x_z(t) = \frac{t}{(t-1)z +1}$. We also introduce $\phi_t (z) = x_z(t)$ and $w_t(z) = \frac{\phi_t (z) }{t}$. We may now write the operator $I_K$ as
\[
I_K (f)(z) = \int_0^1 T_t(z) f(\phi_t(z)) dt,
\]
using similar methods as in \cite{D1}, namely   changing the path of integration by  $x_z(t)$, $0\le t \le 1$. Since there are some differences in the dominating function, we provide the reader with some details. Let $X=A^p_v$ or $X=\BOAF$, $z\in\D$, and let $f\in X$. For $r\in(0,1)$ we define
\[
t(s) = t_{z,r}(s) = \frac{rs}{r(s-1)z+1}= \phi_{r,s}(z).
\]
We will use the fact that $t_{z,r}(s) = r\phi_s(rz)$. It follows that $t_{z,r}([0,1]) \subset\closed{B(0,r)}$, which shows that $f(w)K(z,w)$ as a function of $w$ is analytic in an open subset of $\D$ containing the closed curve $t_{z,r}([0,1])\cup [0,r]$. Therefore, it follows from the residue theorem that
\[
\int_{[0,r]} f(t) K(z,t) dt = \int_{t_{z,r}[0,1]} f(t) K(z,t) dt  = \int_0^1 f(t(s))K(z,t(s)) t'(s) ds.    
\]
Given a dominating integrable function to $ f(t(s))K(z,t(s)) t'(s)$ independent of $r$, the statement follows using the dominated convergence theorem $(r\to 1)$ because $f$ and $K(z,\cdot)$ are continuous at $t_{z,1}(s)$ for a fixed $0<s<1$. Since $t'(s) = rx'_{rz}(s) =\frac{r(1-rz)}{1-(1-s)rz} $ we have 
$\abs{K(z,t(s)) t'(s)}\leq   \sup_{w\in\D}\abs{K(z,w) }   \frac{ 2      }{  (1-\abs{z})^2   }<\infty $. All that is left to prove is that $ \int_0^1\sup_{0<r<1} \abs{f(t_{z,r}(s))}ds $ is finite. Let $g:=\frac{f}{\norm{f}_X}$. Since the weight is radial the evaluation maps, $\norm{\delta_z}_{X^*}$, are rotational symmetric, that is, $\norm{ \delta_z}_{X^*} = \norm{ \delta_{\abs{z}}  }_{X^*}$. From the maximum modulus principle we have $\norm{  \delta_{r_1}  }_{X^*}\leq \norm{  \delta_{r_2} }_{X^*}, \ 0\leq r_1\leq r_2$. Applying these properties we obtain
\[
\abs{g(t_{z,r}(s))} \leq \norm{\delta_ {\abs{t_{z,r}(s)}}  }_{X^*}\leq \norm{\delta_ {t_{\abs{z},r}(s)}  }_{X^*} \leq\norm{\delta_ {t_{\abs{z},1}(s)}  }_{X^*}.
\]
Moreover, $t_{\abs{z},1}'(s) = \frac{1-\abs{z}}{(1-(1-s)\abs{z})^2}\geq 1-\abs{z}$. Hence the line $h(s):=\abs{z}+(1-\abs{z})s$ dominates $t_{\abs{z},1}(s) $ and by the same arguments applied above we obtain $\norm{\delta_{t_{\abs{z},1}(s)}  }_{X^*} \leq \norm{\delta_{h(s)} }_{X^*}$. Since $h$ is invertible with the inverse $h^{-1}(s) =\frac{s}{1-\abs{z}} - \frac{\abs{z}}{1-\abs{z}}
$ we can conclude that
\[
\int_0^1 \norm{\delta_{h(s)} }_{X^*} ds = \int_{\abs{z}}^1 \norm{\delta_{s} }_{X^*} dh^{-1}(s) = \frac{1}{1-\abs{z}} \int_{\abs{z}}^1 \norm{\delta_{s} }_{X^*}  ds
\]
holds, where the right-hand side is finite according to assumption. Therefore, the function
\[
 \sup_{w\in\D}\abs{K(z,w) }   \frac{ 2      }{  (1-\abs{z})^2   }  \norm{\delta_{h(s)} }_{X^*}\norm{f}_X
\]
is integrable and dominates $ \abs{f(t(s))K(z,t(s)) t'(s)}$, which concludes the proof of
\[
I_K (f)(z) = \int_0^1 T_t(z) f(\phi_t(z)) dt.
\]

Notice that if $K$ is bounded, then $I_K$ is compact, therefore we will assume $K$ is not bounded, or more precisely, that $\lim_{(z,w)\to (1,1)}\abs{K(z,w)} = \infty$.\\

Our main example is  the Hilbert matrix operator
\[
\HH(f)(z) =  \int_0^1 f(x) \frac{1}{1-zx}dx = \int_0^1 w_t(z) f(\phi_t(z)) dt,
\]
which is known to belong to $\mathcal L (A^p_\alpha)$ for $p>2+\alpha>1,$ and to $\mathcal L(H^\infty_\alpha)$ for $\alpha\in(0,1)$.

\bigskip

We finish this section with some more notations. The number $z$ will always be complex in contrast to $r$ which will always be real and non-negative. It usually holds that $\abs{z}<1$ and $0\leq r<1$. If $a\in\closed{\D}$ and $z\to a$, it means that $z\to a$ from inside the open unit disk $\D$. %If $a\in\partial \D$ we denote the nontangential limit with $z\stackrel{*}{\to}a$.
We write $f(x)\lesssim g(x)$ if there exists a constant $0<C<\infty$ such that $f(x)\leq Cg(x)$ for all $x$ in some implicit or explicit given set. The notation $\gtrsim $ is used similarly but with $\geq$ instead of $\leq$  and if both hold, we write $f(x)\asymp g(x)$. The Beta function will be denoted by $\beta:\mathbb C\setminus \mathbb Z_{\leq 0} \times \mathbb C\setminus \mathbb Z_{\leq 0} \to \mathbb C$, which should not be mixed up with an open complex ball, $B(w,r)= \{z\in\mathbb C: \abs{z-w}<r\}, \ w\in\mathbb C,$ and $r> 0$. Moreover, the closed unit ball of a Banach space $X$ is denoted by $B_X=\left\{X: \norm{f}_X \leq 1\right\}$.

\section{Some useful results}\label{sec:SomeUsefulResults}

The following lemma is a special case of \cite[Theorem 1]{BL}.
\begin{lem}\label{lem:AnLpIdentity}
Suppose $(f_n)_n\subset A^p_v, \ p\geq 1$, $\sup_n\norm{f_n}_{A^p_v}<\infty$, and $f_n\to f$ pointwise a.e. in $\D$. Then
\[
\lim_{n\to \infty}   (\norm{f_n}^p_{A^p_v} - \norm{f_n-f}^p_{A^p_v}) = \norm{f}^p_{A^p_v}.
\]
\end{lem}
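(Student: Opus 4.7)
The statement is a version of the celebrated Brezis--Lieb lemma adapted to the measure $v\,dA/\pi$ on $\D$, which is the special case of \cite[Theorem 1]{BL} being referenced. Before entering the argument proper, I would first check that $f\in A^p_v$: since $f_n\to f$ pointwise a.e.\ and $\sup_n\norm{f_n}_{A^p_v}<\infty$, applying Fatou's lemma to the nonnegative sequence $|f_n|^p v$ gives $\norm{f}_{A^p_v}\leq \liminf_n \norm{f_n}_{A^p_v}<\infty$. Writing $g_n := f_n-f$, the task reduces to showing
\[
\int_\D \absb{|f_n(z)|^p - |g_n(z)|^p - |f(z)|^p}\, v(z)\,\frac{dA(z)}{\pi} \longrightarrow 0.
\]

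The key ingredient is the elementary pointwise inequality: for every $\eps>0$ there exists a constant $C_\eps>0$ such that
\[
\absb{|a+b|^p - |a|^p - |b|^p}\leq \eps\,|a|^p + C_\eps\,|b|^p
\]
for all $a,b\in\C$. This is established by treating the regimes $|b|\leq|a|$ and $|b|>|a|$ separately, using the mean value estimate $\bigl||a+b|^p-|a|^p\bigr| \leq p(|a|+|b|)^{p-1}|b|$ together with Young's inequality. Applying this inequality with $a=g_n(z)$ and $b=f(z)$, so that $a+b=f_n(z)$, yields the pointwise bound
\[
\absb{|f_n(z)|^p - |g_n(z)|^p - |f(z)|^p}\leq \eps\,|g_n(z)|^p + C_\eps\,|f(z)|^p.
\]

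The final step is the standard truncation trick. Define
\[
\phi_n(z) := \bigl(\absb{|f_n(z)|^p - |g_n(z)|^p - |f(z)|^p} - \eps\,|g_n(z)|^p\bigr)_+,
\]
so that $0\leq \phi_n(z) \leq C_\eps\,|f(z)|^p$, a function lying in $L^1(v\,dA/\pi)$ since $f\in A^p_v$. Because $g_n\to 0$ a.e., we have $\phi_n\to 0$ a.e.\ as well, and dominated convergence gives $\int_\D \phi_n\, v\,dA/\pi \to 0$. Combining this with $\absb{|f_n|^p - |g_n|^p - |f|^p}\leq \phi_n + \eps\,|g_n|^p$ and the uniform estimate $\sup_n \norm{g_n}_{A^p_v}^p \leq \sup_n(\norm{f_n}_{A^p_v}+\norm{f}_{A^p_v})^p =: M < \infty$, we obtain
\[
\limsup_{n\to\infty}\int_\D \absb{|f_n|^p - |g_n|^p - |f|^p}\,v\,\frac{dA}{\pi} \leq \eps M,
\]
and letting $\eps\to 0$ finishes the argument. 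The only real obstacle is the elementary pointwise inequality itself; the measure-theoretic half is a routine invocation of Fatou's and the dominated convergence theorems, with the radial weight $v$ playing no role beyond providing the reference measure.
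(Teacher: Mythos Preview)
Your argument is correct and is precisely the standard Brezis--Lieb proof. The paper does not supply its own argument for this lemma: it simply records the statement as a special case of \cite[Theorem 1]{BL} and moves on. So there is nothing to compare against beyond noting that you have reproduced the Brezis--Lieb reasoning faithfully. One tiny quibble: Fatou only gives $\norm{f}_{L^p_v}<\infty$, not $f\in A^p_v$ (analyticity is not immediate from a.e.\ pointwise convergence), but only the finiteness of the integral is needed for the identity, so this does not affect the proof.
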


\begin{lem}\label{lem:AResultForALinearCombinationOfDilations}
For the spaces $X=A^p_v, \, 1 < p < \infty$, $X=\BOAFS,$ and $X=\BOAF$, there exists a sequence consisting of compact operators $\LNOP{n}:X\to X, \ n=1,2,\ldots $ such that %$I-\LNOP{n}\to 0 $ strongly and 
$\limsup_{n\to\infty}\norm{I-\LNOP{n}}_{X\to X}\leq 1$. Moreover, for every $0<R<1$ we have 
\begin{equation}\label{CoreResultForALinearCombinationOfDilations}
\lim_{n\to \infty}\sup_{\norm{f}_X \leq 1} \sup_{\abs{z} \leq R} \abs{(I-\LNOP{n})(f)(z)} = 0.
\end{equation}

\end{lem}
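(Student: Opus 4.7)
The plan is to construct the compact operators $\LNOP{n}$ as a linear combination of dilation operators $D_r f(z) := f(rz)$, as suggested by the internal label of the lemma. A natural first candidate is the Fej\'er mean $\sigma_n f(z) = \sum_{k=0}^n \bigl(1 - \tfrac{k}{n+1}\bigr) a_k z^k$ of the Taylor series $f = \sum_k a_k z^k$, which admits the angular-convolution representation $\sigma_n f(z) = \int_0^{2\pi} f(z e^{-i\theta}) F_n(\theta) \frac{d\theta}{2\pi}$ with Fej\'er kernel $F_n \geq 0$ of unit $L^1$-mass; an equally natural candidate is a weighted average of dilations $D_{r_n}$ with $r_n \nearrow 1$. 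Three properties must be checked: compactness, the uniform convergence on compact subdisks, and the operator-norm bound.

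Compactness of $\LNOP{n}$ is immediate in the Fej\'er-mean case since each $\sigma_n$ has finite-dimensional range. For $\LNOP{n} = D_r$ with $r < 1$, the image $D_r(B_X)$ consists of holomorphic functions on a neighborhood of $\overline{\D}$, uniformly bounded by the continuity of point evaluations on $X$ combined with the maximum modulus principle; Montel's theorem then yields relative compactness in the sup-norm on $\overline{\D}$, which passes to compactness in $X$ via the continuous embedding $H^\infty \hookrightarrow X$. For the uniform convergence on $\{|z| \leq R\}$ required by \eqref{CoreResultForALinearCombinationOfDilations}, standard Cauchy estimates give $|f'(w)| \leq C(R) \norm{f}_X$ uniformly on $|w| \leq R$, whence
\[
|f(z) - f(r_n z)| \leq |z|(1 - r_n) \sup_{|w| \leq R} |f'(w)| \leq R\, C(R)\, (1-r_n)
\]
for $|z| \leq R$ and $\norm{f}_X \leq 1$, which tends to $0$ as $r_n \to 1$. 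A similar smoothness estimate on compact subdisks handles the $\sigma_n$ case.

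The main obstacle is the bound $\limsup_n \norm{I - \LNOP{n}}_{X \to X} \leq 1$. On $X = A^2_v$, Parseval yields it directly: with $m_k := \norm{z^k}_{A^2_v}^2$,
\[
\norm{(I - \sigma_n) f}_{A^2_v}^2 = \sum_{k=0}^n |a_k|^2 \tfrac{k^2}{(n+1)^2} m_k + \sum_{k > n} |a_k|^2 m_k \leq \sum_{k \geq 0} |a_k|^2 m_k = \norm{f}_{A^2_v}^2.
\]
On $A^p_v$ with $p \neq 2$, or on $\BOAF$ and $\BOAFS$, the naive estimate --- writing $I - \sigma_n$ (or $I - D_{r_n}$) as convolution on the unit circle against a signed measure of total variation at most $2$ --- only yields $\norm{I - \LNOP{n}} \leq 2$. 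Reducing this to $1 + o(1)$ asymptotically will require exploiting the analyticity of $f$ (so that only nonnegative angular frequencies contribute), the radial symmetry of the weight, and a carefully tuned linear combination of dilations rather than a single $D_{r_n}$ or $\sigma_n$. This sharp operator-norm bound is the delicate step I anticipate as the main difficulty.
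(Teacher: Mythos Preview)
Your proposal correctly identifies its own gap: the sharp bound $\limsup_n \norm{I - \LNOP{n}} \leq 1$ is established only for $A^2_v$, and for the remaining cases you end with an aspirational sentence about what ``will require'' a carefully tuned combination of dilations. That is not a proof, and the total-variation-$2$ obstruction you flagged is genuine; no elementary explicit construction of this kind is known to remove it on $A^p_v$ for $p \neq 2$ or on $\BOAF$.

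The paper sidesteps the difficulty by a non-constructive route. For $\BOAF$ and $\BOAFS$ it simply cites \cite[Proposition~2.1]{MR}. For $A^p_v$ it invokes M-ideal theory: reflexivity (hence no copy of $\ell^1$), the Brezis--Lieb lemma (Lemma~\ref{lem:AnLpIdentity}, giving the $(m_p)$ property), and the metric compact approximation property (supplied by the very dilation operators $D_r$ you proposed, which are compact and converge strongly to $I$) feed into \cite[Corollary~3.6]{KW} to conclude that $K(A^p_v)$ is an M-ideal in $\mathcal{L}(A^p_v)$. Then \cite[Theorem~2.4(4)]{K1} abstractly delivers compact operators $\LNOP{n}$ satisfying both $\limsup_n \norm{I - \LNOP{n}} \leq 1$ and $\norm{(I - \LNOP{n})f} \to 0$ for every fixed $f$; the uniform-on-compacta statement \eqref{CoreResultForALinearCombinationOfDilations} then follows from compactness of $B_{A^p_v}$ in the topology of compact convergence via a finite $\epsilon$-net argument. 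So the missing step is exactly the one you flagged, and the paper's fix is to abandon explicit construction in favor of the Kalton--Werner M-ideal machinery.
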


%See https://home.uia.no/aasvaldl/LO5.pdf
%for definition of metric compact approximation property.
\begin{proof}
The results for $\BOAF$  and  $\BOAFS$  are given explicitly in \cite[Proposition 2.1]{MR}. For the weighted Bergman spaces we invoke Corollary 3.6 in \cite{KW}. Since the space is reflexive, it does not  contain a copy of $\ell^1$. The $(m_p)$ property follows from Lemma \ref{lem:AnLpIdentity}. Using the dilation operator 
$f(z)\mapsto f(rz),  \ 0 < r < 1,$ and the fact that the dilation operator converges strongly to the identity operator $I$  in $A^p_v$, it is easy to see that $A^p_v$ has the metric compact approximation property. It now follows from  \cite[Corollary 3.6 (and the Introduction)]{KW}  that the space of all compact operators $K(A^p_v)$ is an M-ideal in $\mathcal L(A^p_v)$. Using \cite[Theorem 2.4 (4)]{K1},  we obtain a sequence $(\LNOP{n})$ satisfying
\[
\limsup_{n\to\infty}\norm{I-\LNOP{n}}_{A^p_v}\leq 1 \quad  \text{ and } \quad \limsup_{n\to\infty}\norm{(I-\LNOP{n})(f)}_{A^p_v} = 0, \ f\in A^p_v.
\] 
Let $0<R<1$. Since $B_{A^p_v}$ is  compact with respect to the topology of compact convergence, we can find for each $\epsilon>0$ a finite set $\{f_1,\ldots,f_N\}\subset A^p_v$ such that 
\[
\sup_{\norm{f}_X \leq 1} \sup_{\abs{z} \leq R} \abs{(I-\LNOP{n})(f)(z)} \leq \epsilon + \max\left\{ \sup_{\abs{z} \leq R}  \abs{(I-\LNOP{n})(f_j)(z)} : \ j=1,\ldots,N\right\}.
\]
Let $n\to \infty$ to obtain \eqref{CoreResultForALinearCombinationOfDilations}.
\end{proof}

Hereafter, we denote by $\LNOP{n}$ the compact operators given in Lemma \ref{lem:AResultForALinearCombinationOfDilations}.

\section{An upper bound of the essential norm of $I_K$}\label{sec:UpperBound}
In this section we will use the following notations:
\[
 D_{\leq R,t} :=\phi_t(\D)\cap \closed{R\D}  \text{ and } D_{>R,t}  := \phi_t(\D) \setminus \closed{R\D} , \  t,R\in(0,1) .
\]

All results obtained in this  section concerning $I_K:X\to X, \ X=A^p_v$ or $X=\BOAF$ will demand that there exists an $0 < R_0 < 1$ with the following properties. Regarding an operator $I_K$ on $A^p_v$ we have (Condition for Upper Bound $A^p_v$) 
\begin{equation}\tag{CUBA}\label{eq:EssentialNormBddA}
  \int_0^1       \sup_{ z\in \phi_t^{-1}(D_{>R_0,t}) }  \frac{  \abs{   T_t(z)   }   v(z)^\frac{1}{p}  }{ v(\phi_t(z))^\frac{1}{p} } \frac{t^\frac{2}{p}   dt   }{(1-t)^\frac{2}{p} }<\infty
\end{equation}
and for every $R<1$ it holds that
\begin{equation}\tag{CUB2A}\label{eq:ConditionForUpperBoundOfIKBergman}
\int_0^1     \norm{T_t\chi_{   \phi_t^{-1}(D_{\leq R,t})   }  }_{L^p_v}   dt < \infty.
\end{equation}
If we instead have $X=\BOAF$, we will demand
\begin{equation}\tag{CUBH}\label{eq:EssentialNormBddH}
  \int_0^1       \sup_{ z\in \phi_t^{-1}(D_{>R_0,t}) }  \frac{  \abs{   T_t(z)   }   \BOAFW(z)  }{ \BOAFW(\phi_t(z)) }dt <\infty 
\end{equation}
and
\begin{equation}\tag{CUB2H}\label{eq:ConditionForUpperBoundOfIKInfty}
\int_0^1     \norm{T_t\chi_{   \phi_t^{-1}(D_{\leq R,t})   }  }_{L^\infty_\BOAFW}   dt < \infty
\end{equation}
for every $R<1$.

\begin{rem}\label{rem:ProblemWithUnboundedWeights}

Notice that if $v$ is unbounded, then 
\[
\inf_{t\in(0,1)}\limsup_{z\to 1}\frac{v(z)}{v(\phi_t(z))}=\infty.
\]
Indeed, since $v$ is radial we may write the quotient as $\frac{v(\abs{z})}{v(\abs{\phi_t(z)})}$. Let $0<t<1$ and notice that $\abs{\phi_t(re^{i\theta})}<M(\theta)<1, \ \theta \neq 0$. Hence, we have $\frac{v(r)}{v(\phi_t(re^{i\theta}))}\to \infty$ as $r\to 1$, when $ \theta \in (-\pi,\pi)\setminus\{0\}$. Choose a strictly decreasing positive sequence $(\theta_n)_n$ such that $\lim_{n\to\infty}\theta_n\to 0$ and let $a(n)$ be an increasing function, tending to $\infty$ as $n\to\infty$. For each $n$, define $r_n$ such that $\frac{v(r_n)}{v(\phi_t(r_ne^{i\theta_n}))}>a(n)$ and the statement follows using the sequence $(r_ne^{i\theta_n})_n$. Notice that $a(n)\to \infty$ as $n\to\infty$ arbitrarily fast. For this reason one can expect that condition (\ref{eq:EssentialNormBddA}) is in general not satisfied if the weight is unbounded.  %we will only consider the case where $v$ is bounded.
\end{rem}

\begin{thm}\label{thm:Upperbound}

Let $1 <  p < \infty.$ For operators $I_K:A^p_v \to A^p_v$, we have

\begin{equation}\label{eq:ConditionForUpperBoundOfIKBergman2}
 \norm{I_K}_{e,A^p_v\to A^p_v} \leq  \int_0^1       \limsup_{ z\to 1  }  \frac{  \abs{   T_t(z)   }   v(z)^\frac{1}{p}  }{ v(\phi_t(z))^\frac{1}{p} } \frac{t^\frac{2}{p}   dt   }{(1-t)^\frac{2}{p} }
\end{equation}
and for operators $I_K:\BOAF \to \BOAF$ (or $\BOAFS\to \BOAF$), 
\begin{equation}\label{eq:ConditionForUpperBoundOfIK}
 \norm{I_K}_{e,\BOAF\to \BOAF} \leq  \int_0^1       \limsup_{ z\to 1  }  \frac{  \abs{   T_t(z)   }   \BOAFW(z) }{ \BOAFW(\phi_t(z)) } dt \quad \text{ and }
\end{equation}
\[
 \norm{I_K}_{e,\BOAFS\to \BOAF} \leq  \int_0^1       \limsup_{ z\to 1  }  \frac{  \abs{   T_t(z)   }   \BOAFW(z) }{ \BOAFW(\phi_t(z)) } dt.
\]
\end{thm}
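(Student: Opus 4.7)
The plan is to use the compact approximating operators $L_n$ from Lemma \ref{lem:AResultForALinearCombinationOfDilations} to write $\|I_K\|_{e,X\to X}\le\|I_K(I-L_n)\|_{X\to X}$, apply Minkowski's integral inequality to the representation $I_K g(z)=\int_0^1 T_t(z)g(\phi_t(z))\,dt$ to get
\[
\|I_K(I-L_n)f\|_X\le\int_0^1\|W_t(I-L_n)f\|_X\,dt,\qquad W_tg:=T_t\cdot(g\circ\phi_t),
\]
and then split $W_t=W_t^{\mathrm{in},R}+W_t^{\mathrm{out},R}$ according to whether $\phi_t(z)\in\overline{R\D}$ or not, for some $R_0\le R<1$. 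The inner part will be made small by $n\to\infty$ using the uniform convergence property \eqref{CoreResultForALinearCombinationOfDilations}, and the outer part will be controlled by a weighted composition operator estimate, after which we let $R\to 1^-$.

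For the inner part, the pointwise bound $|(I-L_n)f(w)|\le \sup_{|w'|\le R,\,\|f\|_X\le 1}|(I-L_n)f(w')|$ (valid on $|w|\le R$) combined with \eqref{CoreResultForALinearCombinationOfDilations} and the integrability conditions \eqref{eq:ConditionForUpperBoundOfIKBergman}/\eqref{eq:ConditionForUpperBoundOfIKInfty} yields
\[
\limsup_n\int_0^1\sup_{\|f\|_X\le 1}\|W_t^{\mathrm{in},R}(I-L_n)f\|_X\,dt=0
\]
for every fixed $R<1$. For the outer part on $A^p_v$, the change of variables $w=\phi_t(z)$ (with Jacobian $|\phi_t'(z)|^{-2}$) followed by pulling the density out as a supremum gives
\[
\|W_t^{\mathrm{out},R}g\|_{A^p_v}^p\le\sup_{z\in\phi_t^{-1}(D_{>R,t})}\frac{|T_t(z)|^p v(z)}{|\phi_t'(z)|^2 v(\phi_t(z))}\,\|g\|_{A^p_v}^p,
\]
and in the $H^\infty_\BOAFW$ case the analogous (change-of-variables-free) bound $|W_t^{\mathrm{out},R}g(z)|\BOAFW(z)\le\frac{|T_t(z)|\BOAFW(z)}{\BOAFW(\phi_t(z))}\|g\|_{H^\infty_\BOAFW}$ on $\phi_t^{-1}(D_{>R,t})$ produces an analogous sup bound without the Jacobian. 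Substituting $g=(I-L_n)f$ and invoking $\limsup_n\|I-L_n\|_{X\to X}\le 1$ gives a bound on $\limsup_n\|W_t^{\mathrm{out},R}(I-L_n)\|_{X\to X}$ by the corresponding supremum.

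It then remains to transport $\limsup_n$ and $\lim_{R\to 1^-}$ through the integral in $t$. For the first, the inclusion $\phi_t^{-1}(D_{>R,t})\subseteq\phi_t^{-1}(D_{>R_0,t})$ whenever $R\ge R_0$ gives a dominating function for $\|W_t^{\mathrm{out},R}(I-L_n)\|_{X\to X}$ which is integrable in $t$ thanks to \eqref{eq:EssentialNormBddA}/\eqref{eq:EssentialNormBddH}, so reverse Fatou applies. For the second, since the only point where $\phi_t(\overline{\D})$ touches $\partial\D$ is $\phi_t(1)=1$, the sets $\phi_t^{-1}(D_{>R,t})$ shrink into arbitrarily small neighbourhoods of $1$ as $R\to 1^-$, so monotone convergence yields
\[
\lim_{R\to 1^-}\sup_{z\in\phi_t^{-1}(D_{>R,t})}\frac{|T_t(z)|v(z)^{1/p}}{|\phi_t'(z)|^{2/p}v(\phi_t(z))^{1/p}}=\limsup_{z\to 1}\frac{|T_t(z)|v(z)^{1/p}}{|\phi_t'(z)|^{2/p}v(\phi_t(z))^{1/p}},
\]
and the elementary computation $|\phi_t'(z)|=\frac{t(1-t)}{|1-(1-t)z|^2}\to\frac{1-t}{t}$ as $z\to 1$ converts the Jacobian factor into $\frac{t^{2/p}}{(1-t)^{2/p}}$, matching the stated bound \eqref{eq:ConditionForUpperBoundOfIKBergman2}; in the $H^\infty_\BOAFW$ case the Jacobian is absent and the bound \eqref{eq:ConditionForUpperBoundOfIK} follows directly. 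The main obstacle will be justifying the two limit-interchange steps above: specifically, producing the $n$-uniform, $t$-integrable dominating function needed for reverse Fatou, which is exactly the role played by conditions \eqref{eq:EssentialNormBddA}/\eqref{eq:EssentialNormBddH} and explains their shape.
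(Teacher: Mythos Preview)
Your proposal is correct and follows essentially the same route as the paper's proof: approximate $I_K$ by $I_K L_n$, apply Minkowski to pass to $\int_0^1\|T_tC_{\phi_t}(I-L_n)f\|_X\,dt$, split according to $|\phi_t(z)|\le R$ versus $>R$, kill the inner piece using \eqref{CoreResultForALinearCombinationOfDilations} together with \eqref{eq:ConditionForUpperBoundOfIKBergman}/\eqref{eq:ConditionForUpperBoundOfIKInfty}, bound the outer piece by pulling out the density ratio as a supremum, and then send $R\to 1$ via dominated convergence using \eqref{eq:EssentialNormBddA}/\eqref{eq:EssentialNormBddH}. The only cosmetic differences are that the paper computes the Jacobian $|(\phi_t^{-1})'(w)|^2=\tfrac{t^2}{(1-t)^2|w|^4}$ explicitly up front (producing the harmless factor $R^{-4/p}$) rather than carrying $|\phi_t'(z)|^{-2/p}$ to the end, and that your ``reverse Fatou'' step is unnecessary since the $n$-dependence factors cleanly as $\|I-L_n\|_{X\to X}$ times an $n$-independent integral.
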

\begin{proof}  We discuss each of the cases  $A^p_v$ and    $\BOAF$, $\BOAFS$  separately.

We start with $A^p_v$. For any $f\in  A^p_v$,
we have
\[
I_K(f)(z) = \int_0^1  f(\phi_t(z))  T_t(z) dt.
\]
By Minkowski's inequality for integrals, we obtain
\begin{align*}
\norm{I_K(f)}_{A^p_v}& \leq \int_0^1 \norm{T_t C_{\phi_t}(f)}_{A^p_v} dt \\
&= \int_0^1  \left(    \int_\D       \abs{  f(\phi_t(z))  }^p \abs{   T_t(z)  }^p  v(z)  \frac{dA(z)}{\pi}    \right)^\frac{1}{p} dt \\
 &= \int_0^1  \left(    \int_{\phi_t(\D )}      \abs{  f(w)  }^p \abs{   T_t(\phi_t^{-1}(w))   }^p  v(\phi_t^{-1}(w))  \abs{{(\phi_t^{-1})}'(w)}^2  \frac{dA(w)}{\pi}    \right)^\frac{1}{p} dt\\
 &= \int_0^1   \frac{t^\frac{2}{p}}{(1-t)^\frac{2}{p}}     \left(   \int_{\phi_t(\D )}  \abs{w}^{-4}   \abs{  f(w)  }^p  \abs{   T_t(\phi_t^{-1}(w))   }^p v(\phi_t^{-1}(w))   \frac{dA(w)}{\pi}    \right)^\frac{1}{p} dt.
\end{align*}
Next, we split the integral over $\phi_t(\D )$ in two parts
\begin{align*}
\frac{(1-t)^2} {t^2}  \norm{T_t C_{\phi_t}(f)}^p_{A^p_v} &=   \int_{        D_{\leq R,t}     }  \abs{w}^{-4}   \abs{  f(w)  }^p  \abs{   T_t(\phi_t^{-1}(w))   }^p v(\phi_t^{-1}(w))   \frac{dA(w)}{\pi}  \\
&\quad  +           \int_{      D_{> R,t}     }  \abs{w}^{-4}   \abs{  f(w)  }^p  \abs{   T_t(\phi_t^{-1}(w))   }^p v(\phi_t^{-1}(w))   \frac{dA(w)}{\pi}  \\
&  \leq  \sup_{\abs{z}\leq R}  \abs{f(z)}^p    \int_{        D_{\leq R,t}     }  \abs{w}^{-4}     \abs{   T_t(\phi_t^{-1}(w))   }^p v(\phi_t^{-1}(w))   \frac{dA(w)}{\pi}  \\
&\quad  +   \sup_{z\in   D_{> R,t}} \left(    \abs{z}^{-4} \abs{   T_t(\phi_t^{-1}(z))   }^p   \frac{ v(\phi_t^{-1}(z))  }{ v(z) }  \right)    \int_{      D_{> R,t}     }   \abs{  f(w)  }^p v(w) \frac{dA(w)}{\pi}.  \\
\end{align*}
Since $(a+b)^\frac{1}{p}\leq a^\frac{1}{p} + b^\frac{1}{p} , \ a,b\geq 0,$ we conclude 
\begin{align*}
 \norm{T_t C_{\phi_t}(f)}_{A^p_v} &\leq  \sup_{\abs{z}\leq R}  \abs{f(z)}  \norm{T_t\chi_{   \phi_t^{-1}(D_{\leq R,t})   }  }_{L^p_v}  \\
&\quad  +   \frac{t^{\frac{2}{p}}}{(1-t)^\frac{2}{p} }   \sup_{z\in   D_{> R,t}} \left(    \abs{z}^{-4} \abs{   T_t(\phi_t^{-1}(z))   }^p   \frac{ v(\phi_t^{-1}(z))  }{ v(z) }  \right)^\frac{1}{p}    \norm{f}_{A^p_v} \\
 &\leq  \sup_{\abs{z}\leq R}  \abs{f(z)} \norm{T_t\chi_{   \phi_t^{-1}(D_{\leq R,t})   }  }_{L^p_v}   + \norm{f}_{A^p_v} \frac{   t^{\frac{2}{p}}    R^{-\frac{4}{p}}      }{(1-t)^\frac{2}{p} }   \sup_{z\in  \phi_t^{-1}(  D_{> R,t}  )}   \frac{  \abs{   T_t(z)   }   v(z)^\frac{1}{p}  }{ v(\phi_t(z))^\frac{1}{p} }.
\end{align*}

 Applying the calculations above to $f-\LNOP{n}(f)$ and taking the supremum over $f\in B_{A^p_v}$, we obtain
\begin{align*}
\norm{I_K-I_K\LNOP{n}}_{A^p_v\to  A^p_v} &\leq \sup_{f\in B_{A^p_v}} \sup_{\abs{z}\leq R}  \abs{(I-\LNOP{n})(f)(z)} \int_0^1     \norm{T_t\chi_{   \phi_t^{-1}(D_{\leq R,t})   }  }_{L^p_v}   dt  \\
&\quad + R^{-\frac{4}{p}}    \norm{ I-\LNOP{n} }_{A^p_v\to A^p_v}    \int_0^1 \frac{   t^{\frac{2}{p}}     }{(1-t)^\frac{2}{p} }   \sup_{z\in  \phi_t^{-1}(  D_{> R,t}  )}   \frac{  \abs{   T_t(z)   }   v(z)^\frac{1}{p}  }{ v(\phi_t(z))^\frac{1}{p} }  dt.
\end{align*}
Using Lemma \ref{lem:AResultForALinearCombinationOfDilations} and (\ref{eq:ConditionForUpperBoundOfIKBergman}) we get, for every $0<R<1$
\[
\limsup_{n\to\infty}\norm{I_K- I_K\LNOP{n}}_{A^p_v\to  A^p_v} \leq R^{-\frac{4}{p}}     \int_0^1 \frac{   t^{\frac{2}{p}}     }{(1-t)^\frac{2}{p} }   \sup_{z\in  \phi_t^{-1}(  D_{> R,t}  )}   \frac{  \abs{   T_t(z)   }   v(z)^\frac{1}{p}  }{ v(\phi_t(z))^\frac{1}{p} }  dt.
\]
Letting $R\to 1$ and using the dominated convergence theorem we have
\[
 \norm{I_K}_{e,A^p_v\to A^p_v} \leq  \int_0^1       \lim_{R\to 1  }  \kern  -2pt \sup_{z\in  \phi_t^{-1}(  D_{> R,t}  )} \kern - 5pt  \frac{  \abs{   T_t(z)   }   v(z)^\frac{1}{p}  }{ v(\phi_t(z))^\frac{1}{p} } \frac{t^\frac{2}{p}   dt   }{(1-t)^\frac{2}{p} }      =\int_0^1       \limsup_{ z\to 1  }  \frac{  \abs{   T_t(z)   }   v(z)^\frac{1}{p}  }{ v(\phi_t(z))^\frac{1}{p} } \frac{t^\frac{2}{p}   dt   }{(1-t)^\frac{2}{p} } ,
\]
where we have used the fact that $\phi_t$ is injective with a continuous inverse and $1$ as a fixed point, to obtain the last equality. The use of the dominated convergence theorem is justified by (\ref{eq:EssentialNormBddA}).

Next, we consider the spaces  $\BOAF$ and $\BOAFS$.
This case could be proved  similarly  to the weighted Bergman case. We might, however, use the calculations done for the $A^p_v$-case. Notice that for $f\in \BOAF$ we have
\[
\norm{f}_{\BOAF} = \lim_{p\to\infty}\norm{f}_{A^p_{\BOAFW^p}},
\]
where the norms on the right-hand side can be viewed as an increasing sequence. Hence, we immediately obtain for $f\in \BOAF$ or $f\in \BOAFS$
\begin{align*}
\norm{ T_t C_{\phi_t}(f) }_{\BOAF} &\leq \lim_{p\to\infty}  \left( \sup_{\abs{z}\leq R}  \abs{f(z)} \norm{T_t\chi_{   \phi_t^{-1}(D_{\leq R,t})   }  }_{L^p_{\BOAFW^p}}   + \norm{f}_{A^p_{\BOAFW^p}} \frac{   t^{\frac{2}{p}}    R^{-\frac{4}{p}}      }{(1-t)^\frac{2}{p} }   \sup_{z\in  \phi_t^{-1}(  D_{> R,t}  )}   \frac{  \abs{   T_t(z)   }   \BOAFW(z)  }{ \BOAFW(\phi_t(z)) }  \right)  \\
 &=   \sup_{\abs{z}\leq R}  \abs{f(z)} \norm{T_t\chi_{   \phi_t^{-1}(D_{\leq R,t})   }  }_{L^\infty_\BOAFW}   + \norm{f}_{\BOAF}   \sup_{z\in  \phi_t^{-1}(  D_{> R,t}  )}   \frac{  \abs{   T_t(z)   }   \BOAFW(z) }{ \BOAFW(\phi_t(z)) } .
\end{align*}
The rest of the proof is  similar  to the $A^p_v$-case.

\end{proof}

\section{A lower bound of the essential norm of $I_K$ on $A^p_v$}\label{sec:LowerBoundOfIKOnAp}
Most of the results obtained in this section do not demand that $v$ is bounded, however, condition (\ref{eq:StrongCorrelationBetween_v_and_gA}), which is presented later, does not hold if $v$ is unbounded, since the limit is not finite in this case according to Remark \ref{rem:ProblemWithUnboundedWeights}.

We begin by introducing some more definitions and assumptions. Let
\[
f_c(z) := \frac{g(z)}{(1-z)^c},\ c\in(0,\frac{2}{p}].
\]
Most of the time we will, however, use $c<\frac{2}{p}$. The reason for this is clear considering the next condition. We will assume that there exists a constant $\Omega\in(\frac{1}{p},\frac{2}{p})$ such that (Condition Approximate Identity $A^p_v$)
\begin{equation}\tag{CAIA}\label{eq:ConditionApproximateidentityA}
f_c\in A^p_v \text{ when } \Omega<c<\frac{2}{p}\text{ and }\lim_{c\to\frac{2}{p}}\norm{f_c}_{A^p_v}=\infty. 
\end{equation}
This condition is always assumed to hold. If we need any of the conditions presented below, we will write them explicitly in the statements of our results, lemmas and remarks. In this section the lower bound of the essential norm will be based on the following functions
\begin{align*}
C_c(z)&:=  \int_0^1 (\frac{1}{1-t}-z)^c\frac{g(\phi_t(z))}{g(z)}  T_t(z) dt, \ 0<c\leq \frac{2}{p},
\end{align*}
which are meromorphic in $\D$.
We will assume that there exist constants $\zeta>0$ and $\frac{1}{p}<\Omega<\frac{2}{p}$ such that (Condition concerning the Kernel function K)
\begin{equation}\tag{CKA}\label{eq:EquivToIKBounded}
\int_0^1  \sup_{c\in(\Omega,\frac{2}{p})}   \sup_{z\in B(1,\zeta) \cap \D}     \abs{   (\frac{1}{1-t}-z)^c\frac{g(\phi_t(z))}{g(z)}   T_t(z)  }dt <\infty.
\end{equation}

To obtain the exact value of the essential norm, we also assume that for every $t\in(0,1)$ we have
\begin{equation}\tag{CEVA}\label{eq:StrongCorrelationBetween_v_and_gA}
\limsup_{z\to 1} \frac{v(z)}{v(\phi_t(z))} = \lim_{z \to 1}    \frac{g(\phi_t(z))^p}{g(z)^p}<\infty \text{ and } \lim_{z\to 1} T_t(z)\in [0,\infty)
\end{equation}
(Condition for Exact Value).

To increase the generality of some results, we introduce a weaker version of (\ref{eq:EquivToIKBounded}), namely, 
\begin{equation}\tag{CKA-}\label{eq:EquivToIKBoundedWeak}
C_\infty := \sup_{c\in(\Omega,\frac{2}{p})}   \sup_{z\in B(1,\zeta) \cap \D}   \abs{C_c(z)}  =  \sup_{c\in(\Omega,\frac{2}{p})}    \sup_{z\in B(1,\zeta) \cap \D}    \abs{ \int_0^1       (\frac{1}{1-t}-z)^c\frac{g(\phi_t(z))}{g(z)}  T_t(z)  dt } <\infty
\end{equation}
for some $\zeta>0$ and $\frac{1}{p}<\Omega<\frac{2}{p}$ and a weaker version of 
(\ref{eq:StrongCorrelationBetween_v_and_gA}), which states that the limit
\begin{equation}\tag{CEVA-}\label{eq:ConditionForSimilarExpressionA}
\lim_{z \to 1} \frac{g(\phi_t(z))}{g(z)} T_t(z)
\end{equation}
exists. The notations $\Omega,\zeta$ and $C_\infty$ will be used for this purpose in this section and Section \ref{sec:Examples}.

\begin{lem}\label{lem:IKoperatingOnFcIsMultiOp}
For $c\in (0,\frac{2}{p}]$ and $z\in\D\setminus P_c$ we have
\[
I_K (f_c)(z) =  f_c(z) C_c(z),
\]
where $P_c$ consists of the poles of $C_c$.
\end{lem}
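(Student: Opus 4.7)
The statement is purely an algebraic rearrangement of the integral representation $I_K(f)(z)=\int_0^1 T_t(z) f(\phi_t(z))\,dt$ applied to $f_c(z)=g(z)/(1-z)^c$. The plan is to substitute, use an elementary identity to separate the $z$-dependence and $t$-dependence in $f_c(\phi_t(z))$, and then pull the $t$-independent factor $f_c(z)$ outside the integral, recognizing what remains as $C_c(z)$.

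The pivotal computation is simplifying $1-\phi_t(z)$. From $\phi_t(z)=t/((t-1)z+1)$ a direct calculation gives
\[
1-\phi_t(z)=\frac{(1-t)(1-z)}{1-(1-t)z}=\frac{1-z}{\tfrac{1}{1-t}-z},
\]
and hence
\[
\frac{1}{(1-\phi_t(z))^c}=\frac{1}{(1-z)^c}\left(\frac{1}{1-t}-z\right)^{c}.
\]
Substituting this into $f_c(\phi_t(z))=g(\phi_t(z))/(1-\phi_t(z))^c$, multiplying and dividing the integrand by $g(z)$ (which is permissible precisely when $z\notin P_c$, since the zeros of $g$ are the only candidate poles of $C_c$), and pulling the $t$-independent quantity $g(z)/(1-z)^c=f_c(z)$ out of the integral yields
\[
I_K(f_c)(z)=f_c(z)\int_0^1\left(\frac{1}{1-t}-z\right)^{c}\frac{g(\phi_t(z))}{g(z)}\,T_t(z)\,dt=f_c(z)\,C_c(z).
\]

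The only technical issue, and the likely main obstacle, is the consistent choice of branches of the fractional power $w\mapsto w^c$ for non-integer $c$. For $z\in\D$ and $t\in(0,1)$ both $1-z$ and $\tfrac{1}{1-t}-z$ lie in the open right half-plane (the first since $\mathrm{Re}(z)<1$, the second since $\tfrac{1}{1-t}\ge 1>\mathrm{Re}(z)$), so under the principal branch their arguments lie in $(-\pi/2,\pi/2)$, and the identity $(u/v)^c=u^c/v^c$ is valid factor by factor, making the manipulation of powers legitimate. The convergence of the integral defining $I_K(f_c)(z)$ at a fixed $z$ is guaranteed by the assumed boundedness of $I_K$ together with $f_c\in A^p_v$ from \eqref{eq:ConditionApproximateidentityA}, and the Fubini/dominated-convergence justification provided in Section~\ref{sec:Prelim} for $I_K(f)$ applies verbatim to legitimize the rearrangement under the integral sign.
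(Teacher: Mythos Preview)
Your proof is correct and follows essentially the same route as the paper's: substitute $f_c$ into the integral representation, use the identity $1-\phi_t(z)=\dfrac{(1-t)(1-z)}{1-(1-t)z}$ to factor $(1-z)^{-c}$ out of $(1-\phi_t(z))^{-c}$, and recognize the remaining integral as $C_c(z)$ after dividing through by $g(z)$. The paper's version is terser and omits the branch and convergence remarks you added, but the argument is identical.
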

\begin{proof}
We have
\begin{align*}
I_K (f_c)(z) &= \int_0^1 T_t(z) f_c(\phi_t(z)) dt  = \frac{1}{(1-z)^c} \int_0^1 T_t(z) \left(\frac{1-(1-t)z}{1-t} \right)^c  g(\phi_t(z))  dt  =  f_c(z) C_c(z).
\end{align*}
\end{proof}

It is clear that for $h_c:=\frac{f_c}{\norm{f_c}_{A^p_v}  }$ we have for all $z\in\D$, $h_c(z)\to 0 $ as $c\to \frac{2}{p}$. Since the space $A^p_v$ is reflexive, the closed  unit ball $B_{A^p_v}$ is weakly compact. Therefore by a standard argument, we obtain that $h_c\to 0$ weakly, as $c\to\frac{2}{p}$. Next, we present the two main results of this section (Theorem \ref{thm:bestlowerbdd} and Corollary  \ref{cor:EssentalNormOnA}).

\begin{thm}
\label{thm:bestlowerbdd}
Assume that condition (\ref{eq:EquivToIKBoundedWeak}) holds. Let 
\[
C=  \lim_{(c,z)\to(\frac{2}{p},1)} \inf_{\gamma\in(c,\frac{2}{p})} \inf_{w\in  (B(1,\abs{1-z})\cap \D)}  \abs{C_\gamma(w)}. 
\]
We have
\begin{equation}\label{eq:ObtainingLowerBound}
\liminf_{c\to \frac{2}{p}}  \norm{   I_K\Big(\frac{f_c}{\norm{f_c})_{A^p_v}} \Big)  }_{A^p_v} \geq C
\end{equation}
and $C$ is a lower bound of $\norm{I_K}_{e,A^p_v\to A^p_v}$. 
\end{thm}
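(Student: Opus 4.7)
The plan is to apply Lemma~\ref{lem:IKoperatingOnFcIsMultiOp} directly. Since $I_K(f_c)=f_c\cdot C_c$ off the discrete pole set $P_c$, which has planar measure zero,
\[
\norm{I_K(f_c)}_{A^p_v}^p=\int_{\D}\abs{f_c(z)}^p\abs{C_c(z)}^p v(z)\,\frac{dA(z)}{\pi}.
\]
I will bound this from below by restricting the integration to a small cap around $1$ where a uniform lower estimate on $\abs{C_c}$ is available, and then use the fact that $f_c$ concentrates its $A^p_v$-mass near $1$ as $c\to \frac{2}{p}$. To obtain the lower estimate, set
\[
F(c,z):=\inf_{\gamma\in(c,\frac{2}{p})}\inf_{w\in B(1,\abs{1-z})\cap\D}\abs{C_\gamma(w)}.
\]
Both admissible sets shrink as $(c,z)\to(\frac{2}{p},1)$, so $F$ is monotone nondecreasing along this limit, and $C=\sup F$, which is finite by~(\ref{eq:EquivToIKBoundedWeak}). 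Given $\epsilon>0$, I can therefore choose $c^*<\frac{2}{p}$ and $\delta^*>0$ so that $F(c^*,z^*)>C-\epsilon$ for some $z^*\in\D$ with $\abs{1-z^*}=\delta^*$. Then $\abs{C_\gamma(w)}\geq C-\epsilon$ for every $\gamma\in(c^*,\frac{2}{p})$ and every $w\in B(1,\delta^*)\cap\D$; in particular, taking $\gamma=c$ itself gives the pointwise estimate $\abs{C_c(w)}\geq C-\epsilon$ on $B(1,\delta^*)\cap\D$ for every $c\in(c^*,\frac{2}{p})$.

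With this estimate in hand,
\[
\norm{I_K(f_c)}_{A^p_v}^p\geq (C-\epsilon)^p\int_{B(1,\delta^*)\cap\D}\abs{f_c(z)}^p v(z)\,\frac{dA(z)}{\pi}.
\]
For the mass concentration, observe that on $\D\setminus B(1,\delta^*)$ one has $\abs{f_c}^p\leq (\delta^*)^{-cp}\abs{g}^p$, and since $\abs{1-z}<2$ on $\D$, one gets $\int_{\D}\abs{g}^p v\,dA/\pi\leq 2^{c_0 p}\norm{f_{c_0}}_{A^p_v}^p<\infty$ for any fixed $c_0\in(\Omega,\frac{2}{p})$. Hence the tail integral stays uniformly bounded in $c\in(\Omega,\frac{2}{p})$, while $\norm{f_c}_{A^p_v}\to\infty$ by~(\ref{eq:ConditionApproximateidentityA}); consequently $\int_{B(1,\delta^*)\cap\D}\abs{f_c}^p v\,dA/\norm{f_c}_{A^p_v}^p\to 1$. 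Dividing the displayed inequality by $\norm{f_c}_{A^p_v}^p$, taking $\liminf_{c\to 2/p}$, and then letting $\epsilon\to 0$ proves~(\ref{eq:ObtainingLowerBound}).

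For the essential-norm bound, I use the weakly null sequence produced immediately before the statement: $h_c:=f_c/\norm{f_c}_{A^p_v}\to 0$ weakly in the reflexive space $A^p_v$, so $\norm{Lh_c}_{A^p_v}\to 0$ for every compact $L\in\mathcal L(A^p_v)$. Consequently,
\[
\norm{I_K-L}_{A^p_v\to A^p_v}\geq\liminf_{c\to 2/p}\norm{(I_K-L)h_c}_{A^p_v}=\liminf_{c\to 2/p}\norm{I_K h_c}_{A^p_v}\geq C,
\]
and taking the infimum over compact $L$ gives $\norm{I_K}_{e,A^p_v\to A^p_v}\geq C$. I expect the main subtle point to be the careful treatment of the strict inequality $\gamma>c$ in the definition of $C$: the monotonicity argument above is exactly what lets me substitute $\gamma=c$ and thereby control the actual multiplier $C_c$ that appears in $I_K(f_c)$, rather than merely nearby values $C_\gamma$ with $\gamma>c$.
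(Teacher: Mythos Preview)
Your argument is correct and follows essentially the same strategy as the paper: write $I_K(f_c)=f_c\, C_c$ via Lemma~\ref{lem:IKoperatingOnFcIsMultiOp}, extract a uniform lower bound $|C_c|\geq C-\epsilon$ on a small cap $B(1,\delta^*)\cap\D$ from the monotonicity built into the definition of $C$, and use that $h_c=f_c/\|f_c\|_{A^p_v}$ concentrates its $A^p_v$-mass in that cap as $c\to 2/p$. The only difference is cosmetic: the paper packages the minorant as an auxiliary function $S_c(z)$ and controls the complement via the pointwise bound $\sup_{z\notin M}|h_c(z)|<\epsilon$, whereas you control the tail integral directly using $g\in L^p_v$ (which follows from~(\ref{eq:ConditionApproximateidentityA})); your explicit handling of the strict inequality $\gamma>c$ by choosing $c>c^*$ is exactly the monotonicity the paper's $S_c$ construction encodes.
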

\begin{proof}
Given (\ref{eq:ObtainingLowerBound}) it follows from $h_c=\frac{f_c}{\norm{f_c}_{A^p_v}}$ being a weak null sequence that $\norm{L(h_c)}_{A^p_v}\to 0 $ as $c\to\frac{2}{p}$ for every compact operator $L\in \mathcal L (A^p_v)$. Using the triangle inequality, we obtain $C\leq \norm{I_K}_{e,A^p_v\to A^p_v}$.

Define

\[
S_c(z) = \left\{
\begin{array}{@{}l l}
 \inf_{\gamma\in(c,\frac{2}{p})} \inf_{w\in  B(1,\abs{1-z})\cap \D}  \abs{C_\gamma(w)},&  z\in B(1,\zeta) \cap \D\\
0, & z\in \D \setminus B(1,\zeta).
\end{array}\right.
\]
It is clear that the limit 
\[
C =  \lim_{(c,z)\to (\frac{2}{p},1)}  S_c(z)
\]
exists. Now, choose $0<\rho<\zeta$ and $\Omega<c_1<\frac{2}{p}$ such that for $z\in M=B(1,\rho)\cap \D$ and $c_1<c<\frac{2}{p}$ we have $\abs{S_c(z)-C}<\epsilon$. Finally, choose $c_1<c_2<\frac{2}{p}$ such that $\sup_{ z\in  \D\setminus M   }\abs{h_c(z)}<\epsilon$ whenever $c_2<c<\frac{2}{p}$. For these parameters we have, using $M' = \D\setminus M$, that
\begin{align*}
\abs{    \norm{    h_c  S_c    }_{A^p_v} -C   }^p   &\leq \int_M \abs{    h_c(z)  (S_c(z)  -C)  }^p  dA_v(z)  + \int_{ M'}   \abs{    h_c(z)  (S_c(z)  -C)  }^p  dA_v(z) \\
&<\epsilon^p  +   \epsilon^p \int_{M'}   \abs{     (S_c(z)  -C)  }^p  dA_v(z)  \\
&< \epsilon^p  +   2^p\epsilon^p   C_\infty^p.
\end{align*}
%where Lemma \ref{lem:IKoperatingOnFcIsMultiOp} gives the first equality and Lemma \ref{lem:EnablesSuffOfConv} has been used to second last inequality. 

\begin{figure}[h]

  \centering

\begin{tikzpicture}[scale=2]
	%\draw[color=white] (-1.2,1.2) rectangle (1.2,-1.2);
	\draw (0,0) circle (1);

\draw (0.707,0.707) arc (112.5:247.5:0.765);
%\draw (0.617,0.663) arc (120:240.5:0.765);	%rho=\sqrt{2-\sqrt{2}} ~ 0,765
%\draw(1,0) -- (0.5,0.866);
%\draw(1,0) -- (0.5,-0.866);

\draw[dashed](1,0) -- (0.459,0.541);
		\node[left] at (0.65,0.3) {$\rho$};
	\node[left] at (0.7,0) {$M$};
\node[left] at (0,0) {$M'$};
	
%\node at (1.4,0) {$M_1$};	

%\draw[->] (1.3,0.1) -- (0.8,0.5);
%\draw[->] (1.3,-0.1) -- (0.8,-0.5);

\end{tikzpicture} 
   \caption{A partition of $\D$.}
\end{figure}

We have now proved that there is a constant $c_2 <\frac{2}{p}$ such that for $c\in(c_2,\frac{2}{p})$ we have 
\[
  \norm{    h_c  C_c    }_{A^p_v}  \geq  \norm{    h_c  S_c    }_{A^p_v} > C - \epsilon \left(1 + 2^p C_\infty^p \right)^\frac{1}{p}.
\]
Hence, after letting $c\to \frac{2}{p}$ followed by $\epsilon \to 0$ we obtain $ \liminf_{c\to\frac{2}{p}} \norm{    h_c  C_c    }_{A^p_v}  \geq C$. Now Lemma \ref{lem:IKoperatingOnFcIsMultiOp} gives us (\ref{eq:ObtainingLowerBound}).

\end{proof}

\begin{cor}\label{cor:EssentalNormOnA}
Assume that condition (\ref{eq:EquivToIKBounded}) holds. If condition (\ref{eq:ConditionForSimilarExpressionA}) holds, we have that
\[
C \geq \abs{\int_0^1\left(\frac{t}{1-t}\right)^\frac{2}{p}\lim_{z \to 1} \frac{g(\phi_t(z))}{g(z)} T_t(z) dt}.
\]
If the stronger condition (\ref{eq:StrongCorrelationBetween_v_and_gA}) is satisfied, then
\[
\norm{I_K}_{e,A^p_v \to A^p_v} = \int_0^1\left(\frac{t}{1-t}\right)^\frac{2}{p}\limsup_{z\to 1} \frac{v(z)^\frac{1}{p}}{v(\phi_t(z))^\frac{1}{p}} T_t(z) dt.
\]
\end{cor}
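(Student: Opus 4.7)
The plan is to evaluate the constant $C$ from Theorem~\ref{thm:bestlowerbdd} under each hypothesis and, in the stronger case, match it with the upper bound supplied by Theorem~\ref{thm:Upperbound}.

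Under the weaker hypothesis (\ref{eq:ConditionForSimilarExpressionA}), I would first establish that the double limit
\[
L := \lim_{(\gamma, w)\to (2/p, 1)} C_\gamma(w) = \int_0^1 \left(\frac{t}{1-t}\right)^{2/p} \lim_{z\to 1} \frac{g(\phi_t(z))}{g(z)} T_t(z)\, dt
\]
exists. For each fixed $t \in (0,1)$ the integrand $(\frac{1}{1-t}-w)^\gamma \frac{g(\phi_t(w))}{g(w)} T_t(w)$ converges pointwise as $(\gamma,w)\to(2/p,1)$ to $(t/(1-t))^{2/p}$ times the inner limit, while condition (\ref{eq:EquivToIKBounded}) supplies a single $L^1(0,1)$ dominating function valid uniformly for $\gamma \in (\Omega, 2/p)$ and $w \in B(1,\zeta)\cap\D$. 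Dominated convergence then yields $L$. An $\epsilon$--argument identifies $C$ with $|L|$: fix $\epsilon>0$ and a neighborhood $U$ of $(2/p, 1)$ on which $|C_\gamma(w) - L| < \epsilon$; once $(c,z)$ is close enough to $(2/p, 1)$ that all pairs $(\gamma, w)$ with $\gamma \in (c, 2/p)$ and $w \in B(1, |1-z|)\cap\D$ lie in $U$, the inner infimum of $|C_\gamma(w)|$ sits within $(|L|-\epsilon, |L|+\epsilon)$. Combining $C = |L|$ with Theorem~\ref{thm:bestlowerbdd} gives the first assertion.

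For the second assertion, (\ref{eq:StrongCorrelationBetween_v_and_gA}) promotes the inner limit to a nonnegative real number. Indeed $T_t(z)\to \lim T_t \in [0,\infty)$, and the identity
\[
\limsup_{z\to 1} \frac{v(z)}{v(\phi_t(z))} = \lim_{z\to 1} \frac{g(\phi_t(z))^p}{g(z)^p}
\]
in (\ref{eq:StrongCorrelationBetween_v_and_gA}) gives, after taking $p$-th roots,
\[
\lim_{z\to 1} \frac{g(\phi_t(z))}{g(z)} = \limsup_{z\to 1} \frac{v(z)^{1/p}}{v(\phi_t(z))^{1/p}}.
\]
Hence $L\geq 0$, the absolute value drops, and $C$ equals the claimed integral. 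Since $T_t(z)$ admits a nonnegative limit, the upper bound from Theorem~\ref{thm:Upperbound} factors as $\lim T_t \cdot \limsup_{z\to 1} v(z)^{1/p}/v(\phi_t(z))^{1/p}$ inside the $t$-integral (a convergent factor pulls through a $\limsup$, and $|T_t|$ can be replaced by $T_t$), so the upper bound collapses to the same integral as $C$. Sandwiching the essential norm yields equality.

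The main obstacle is the uniform dominated-convergence step for $C_\gamma(w)$. The factor $(\frac{1}{1-t}-w)^\gamma$ is harmless because $\frac{1}{1-t}-w$ stays away from $0$ for $w$ near $1$ and $\gamma$ ranges in a bounded real interval, so the nontrivial content is precisely the integrability condition (\ref{eq:EquivToIKBounded}). Once that is secured, the $\epsilon$--argument for the infimum, the passage through $p$-th roots in (\ref{eq:StrongCorrelationBetween_v_and_gA}), and the factoring of the $\limsup$ in the upper bound are all short and routine.
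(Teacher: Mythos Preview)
Your argument is correct and follows essentially the same route as the paper: both proofs use condition (\ref{eq:EquivToIKBounded}) as a dominating function to pass the limit $(\gamma,w)\to(2/p,1)$ through the $t$-integral defining $C_\gamma(w)$, obtain the uniform convergence needed to identify $C$ with (at least) $|L|$, and then under (\ref{eq:StrongCorrelationBetween_v_and_gA}) drop the absolute value and match with the upper bound of Theorem~\ref{thm:Upperbound}. The only cosmetic difference is that you establish $C=|L|$ via a two-sided $\epsilon$-argument, whereas the paper writes out only the one-sided inequality $|C_\gamma(w)|\ge |L|-\int_0^1|J_{t,\gamma}(w)-(\tfrac{t}{1-t})^{2/p}GT(t)|\,dt$ that the statement actually requires.
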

\begin{proof}
Let $GT(t) = \lim_{z \to 1} \frac{g(\phi_t(z))}{g(z)}  T_t(z) $ and
\[
J_{t,\gamma}(z):= \left(\frac{1}{1-t}-z\right)^\gamma \frac{g(\phi_t(z))}{g(z)} T_t(z).
\]
First, by using the dominated convergence theorem, which can be applied as done below due to (\ref{eq:EquivToIKBounded}), we have
\begin{align*}
  &\lim_{(c,z)\to(\frac{2}{p},1)} \sup_{\gamma\in(c,\frac{2}{p})} \sup_{w\in  B(1,\abs{1-z})\cap \D}  \int_0^1   \abs{ J_{t,\gamma}(w) - \left(\frac{t}{1-t}\right)^\frac{2}{p}GT(t)  } dt \\
&\leq  \lim_{(c,z)\to(\frac{2}{p},1)}  \int_0^1  \sup_{\gamma\in(c,\frac{2}{p})} \sup_{w\in  B(1,\abs{1-z})\cap \D}  \abs{ J_{t,\gamma}(w) - \left(\frac{t}{1-t}\right)^\frac{2}{p}GT(t)  } dt \\
&=  \int_0^1 \lim_{(c,z)\to(\frac{2}{p},1)}    \sup_{\gamma\in(c,\frac{2}{p})} \sup_{w\in  B(1,\abs{1-z})\cap \D}  \abs{ J_{t,\gamma}(w) - \left(\frac{t}{1-t}\right)^\frac{2}{p}GT(t)  } dt \\
&= 0.
\end{align*}

Since 
\begin{align*}
\abs{C_\gamma(w)} &\geq  \abs{   \int_0^1\left(\frac{t}{1-t}\right)^\frac{2}{p}GT(t) dt    } - \abs{ \int_0^1 J_{t,\gamma}(w)  - \left(\frac{t}{1-t}\right)^\frac{2}{p}GT(t)  dt}\\
&\geq  \abs{   \int_0^1\left(\frac{t}{1-t}\right)^\frac{2}{p}GT(t) dt    } -  \int_0^1  \abs{  J_{t,\gamma}(w)  - \left(\frac{t}{1-t}\right)^\frac{2}{p}GT(t) }  dt\\
\end{align*}
and the first statement in the corollary follows. The second statement follows directly from the first and condition (\ref{eq:StrongCorrelationBetween_v_and_gA}). Notice that condition (\ref{eq:StrongCorrelationBetween_v_and_gA}) implies that $ \lim_{z \to 1} \frac{g(\phi_t(z))}{g(z)}  \in[0,\infty)$.  

\end{proof}

We will finish this section with some practical results.

\begin{thm}\label{thm:SomeSuffCond}
If $v$ is bounded, then a sufficient condition for (\ref{eq:ConditionApproximateidentityA}) is that
\begin{enumerate}[itemsep=5pt]
\item 
$\sup_{z\in\D}\abs{g(z)}^p v(z)<\infty;$ 

\item
$\abs{g(z)}^p v(z) \text{ is bounded away from zero on a non-zero Stolz sector close enough to } 1.$
\end{enumerate}

\end{thm}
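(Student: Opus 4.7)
The plan is to verify both clauses of \eqref{eq:ConditionApproximateidentityA} by reducing each to a standard two-dimensional integrability computation in polar coordinates centered at $z=1$; boundedness of $v$ plays no essential role beyond ensuring the quantities involved are well-defined.

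For membership, set $M := \sup_{z\in\D}\abs{g(z)}^p v(z)$, which is finite by (1). For any $c\in(0,2/p)$,
\begin{equation*}
\norm{f_c}^p_{A^p_v} = \int_\D \frac{\abs{g(z)}^p v(z)}{\abs{1-z}^{cp}}\,\frac{dA(z)}{\pi} \leq M \int_\D \frac{dA(z)}{\pi \abs{1-z}^{cp}}.
\end{equation*}
Writing $z = 1 - \rho e^{\I\psi}$ in a neighbourhood of $1$ shows that the rightmost integral is finite whenever $cp<2$. Hence $f_c\in A^p_v$ for every $c<2/p$, and any $\Omega\in(1/p,2/p)$ fulfils the first half of \eqref{eq:ConditionApproximateidentityA}.

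For the blow-up, use (2) to pick $\epsilon>0$ and a truncated Stolz sector $S$ at $1$ on which $\abs{g(z)}^p v(z)\geq \epsilon$. In the parametrization $z = 1-\rho e^{\I\psi}$, such a sector contains a box of the form $\{(\rho,\psi) : 0<\rho<\rho_0,\ \abs{\psi}<\psi_0\}$ for some $\rho_0,\psi_0>0$; on this box $\abs{1-z}=\rho$ and $dA(z)=\rho\,d\rho\,d\psi$. Therefore
\begin{equation*}
\norm{f_c}^p_{A^p_v} \geq \frac{\epsilon}{\pi}\int_{-\psi_0}^{\psi_0}\!\!\int_0^{\rho_0} \rho^{1-cp}\,d\rho\,d\psi = \frac{2\epsilon\psi_0}{\pi}\cdot\frac{\rho_0^{2-cp}}{2-cp} \longrightarrow \infty \quad\text{as}\quad c\to (2/p)^-.
\end{equation*}

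No genuine obstacle is anticipated; the only delicate point is unpacking the standard definition of a Stolz sector at $1$ so as to expose the fixed angular opening $\psi_0$, which guarantees that the $\psi$-integration contributes a positive constant rather than collapsing to zero as $\rho\to 0$. Once this is noted, both halves of \eqref{eq:ConditionApproximateidentityA} reduce to routine exponent bookkeeping.
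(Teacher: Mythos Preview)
Your proposal is correct and follows essentially the same approach as the paper's proof: bound the membership integral using condition (1) and the finiteness of $\int_\D |1-z|^{-cp}\,dA(z)$ for $cp<2$, and obtain the blow-up from condition (2) by integrating $|1-z|^{-cp}$ over a Stolz-type region near $1$. The only cosmetic difference is that the paper parametrizes the Stolz region as a triangle with Cartesian coordinates $x+iy$ centered at $1$, whereas you use polar coordinates $z=1-\rho e^{\I\psi}$; both lead to the same $\frac{1}{2-cp}$ divergence.
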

\begin{proof}
Let $\tilde{\Delta}_{d,l}, \ 0<l<d<1,$ be the triangle with vertices at points $1-d+li, 1-d-li$ and $1$. By assumption, there exists constants $0<l_0<d_0<1$ such that $\abs{g(z)}^p v(z) $ is bounded away from zero on $\tilde{\Delta} := \tilde{\Delta}_{d_0,l_0}$ by a constant $R_0>0$. Hence, we have
\begin{align*}
\norm{f_c}_{A^p_v}^p &\geq  \int_{\tilde{\Delta}} \frac{R_0  dA(z)        }{\pi \abs{1-z}^{c p}   } 
=\int_{1-\tilde{\Delta}} \frac{R_0  dA(z)        }{\pi \abs{z}^{c p}   }
=2\int_0^{d_0} \int_0^{\frac{l_0}{d_0}x}    \frac{R_0  dy  dx      }{\pi  (x^2+y^2)^{\frac{c p}{2}}   }\\
&=2\int_0^{d_0} \int_0^\frac{l_0}{d_0}    \frac{R_0  xdy dx       }{ x^{c p}\pi  (1+y^2)^{\frac{c p}{2}}   }     
  \geq 2R_0  \int_0^{d_0} x^{1-c p }dx  \int_0^\frac{l_0}{d_0}    \frac{  dy       }{ \pi (1+y^2)   }       
 \asymp \frac{d_0^{2-cp}}{ 2-cp},
\end{align*}
which proves that $\lim_{c\to \frac{2}{p}}\norm{f_c}_{A^p_v}=\infty$. Furthermore, using $R_\infty=\sup_{z\in\D}\abs{g(z)}^p v(z)<\infty$, we have for fixed $c\in(\frac{1}{p},\frac{2}{p})$ that
\begin{align*}
 \int_{\D}  \abs{f_ c(z)   }^p dA_v & \leq   R_\infty \int_{\D}  \frac{dA(z)}{    \pi \abs{1-z}^{c p}      } \lesssim   \int_0^1(1-r)^{1-cp}dr =\left[\frac{-(1-r)^{2-cp}}{2-cp}\right]_0^1 <\infty.
\end{align*}
The constant involved in $\lesssim$ depends on $cp$ (see Lemma \ref{lem:ProvingCAIA} for details).

\end{proof}

\begin{rem}\label{rem:usefulRemark}
Considering the case when $v(z) = g(\abs{z})^{-p}$ is real, $v$ is non-increasing and $\sup_{\theta\in(-\pi,\pi)} \abs{T_t(re^{i\theta})} \leq M\abs{T_t(r)}$ for some constant $M$, we can obtain a useful condition that is stronger than both (\ref{eq:EssentialNormBddA}) and (\ref{eq:EquivToIKBounded}). Notice that 
\begin{align*}
\left(\frac{v(z)}{v(\phi_t(z))} \right)^\frac{1}{p}  = \frac{g(\abs{\phi_t(z)})}{g(\abs{z})} \leq \frac{g(\phi_t(\abs{z}))}{g(\abs{z})} . 
\end{align*}
from which it follows that
\begin{align*}
 \sup_{ z\in \D }  \frac{  \abs{   T_t(z)   }   v(z)^\frac{1}{p}  }{ v(\phi_t(z))^\frac{1}{p} }  \frac{t^\frac{2}{p}   dt   }{(1-t)^\frac{2}{p} } &\leq  \sup_{r\in(0,1)} T_t(r) \frac{t^\frac{2}{p}  }{(1-t)^\frac{2}{p} }      \sup_{ \rho\in (0,1) } \frac{  g(\rho) }{ g(\phi_t(\rho)) } \\
&\leq  \sup_{c\in(\Omega,\frac{2}{p})} \sup_{r\in(0,1)} T_t(r) \left(   \frac{1  }{ 1-t } - r  \right)^c      \sup_{ \rho\in (0,1) } \frac{  g(\rho) }{ g(\phi_t(\rho)) }.
\end{align*}
It is now easy to see that, if there exists $\frac{1}{p}<\Omega<\frac{2}{p}$ such that
\begin{equation}\label{eq:StrongerThanCubaNCka}
\int_0^1  \sup_{c\in(\Omega,\frac{2}{p})}   \sup_{z\in  \D}   \abs{  (\frac{1}{1-t}-z)^c     T_t(z)       } \sup_{z\in \D}   \abs{ \frac{g(\phi_t(z))}{g(z)} }dt <\infty,
\end{equation}
then condition (\ref{eq:EssentialNormBddA}) holds. From (\ref{eq:StrongerThanCubaNCka}) it trivially follows that condition
(\ref{eq:EquivToIKBounded}) is satisfied without the three extra assumptions given in the beginning of this remark.

\end{rem}

\section{A lower bound of the essential norm of $I_K$ on $\BOAF$}\label{sec:LowerBoundOfIKOnHInfty}

 %First, we will assume that 
%\begin{equation}\tag{CUBH}\label{eq:EssentialNormBddH}
 % \int_0^1       \limsup_{ z\to 1  }  \frac{  \abs{   T_t(z)   }   \BOAFW(z) }{ \BOAFW(\phi_t(z)) } dt <\infty .
%\end{equation}

Consider the functions
\[
f_{c,n}(z) := z^n  g(z)(1-z)^c , \ c\geq 0,n\in\mathbb Z_{\geq 1},
\]
where $g\in \HOLO(\D)$ is assumed satisfy the conditions
\begin{equation}\tag{C1H}\label{eq:WeightCondUpperBoundWeakH}
\sup_{z\in\D}\abs{g(z)} \BOAFW(z) \leq 1
\end{equation}
and
\begin{equation}\tag{C1H+}\label{eq:WeightCondUpperBoundStrong}
\begin{array}{l}
\limsup_{z\to e^{it}} \abs{g(z)} v(z) = 0 \text{ unless } t=0,\\
\abs{g(r)} \BOAFW(r)\to1  \text{ as }  r\to 1.
\end{array}
\end{equation}

To obtain the exact value of the essential norm, we have to assume  that for every $t\in(0,1)$ we have
\begin{equation}\tag{CEVH}\label{eq:StrongCorrelationBetween_v_and_gH}
 \lim_{r\to 1}    \frac{g(\phi_t(r))}{g(r)} = \limsup_{z\to 1} \frac{\BOAFW(z)}{\BOAFW(\phi_t(z))}<\infty \text{ and } \lim_{z\to 1} T_t(z) \in [0,\infty).
\end{equation}

We also obtain a lower bound using a weaker assumption, that for all $r,t\in(0,1)$ 
\begin{equation}\tag{CEVH-}\label{eq:ConditionForSimilarExpressionH}
\frac{   g(\phi_t(r))   }{g(r)}     T_t(r)  \in[0,\infty).
\end{equation}

Define
\[
C_{c,n}(r) :=\int_0^1   (\frac{1}{1-t}-r)^{-c}                 \left(\frac{   \phi_t(r)    }{r }\right)^n \frac{   g(\phi_t(r))   }{g(r)}               T_t(r)   dt, \ \ \ c\geq 0, n\in\mathbb Z_{\geq 1}, r\in(\delta,1)
\]
for some $0<\delta<1$. This function will form the foundation for the value of the essential norm.

 Moreover, since
\[
  \lim_{r\to 1} \lim_{c\to 0} \abs{f_{c,n}(r)} \BOAFW(r) = 1,
\]
we have
\[
\sup_{0<c<\frac{1}{n}}  \sup_{r\in (1-\gamma(n), 1)} \abs{f_{c,n}(r)} \BOAFW(r) \geq 1,
\]
where $0 < \gamma(n) < 1- \delta$ for all $n$. We can now, for every $n$, choose a constant $c_n\in (0,\frac{1}{n} )$ such that
\begin{equation}\label{AnUsefulIneqFor}
 \sup_{r\in (1-\gamma(n), 1)}   \abs{f_{c_n,n}(r)} \BOAFW(r)>1-\frac{1}{n}.
\end{equation}
Let $f_n:= \frac{       f_{c_n,n}      }{      \norm{   f_{c_n,n}   }_{\BOAF}        }$. Notice that for all $n$ we have $f_n\in \BOAFS$ and $f_n \to 0$ uniformly  on compact subsets of $\D$, when $n\to\infty.$

\begin{thm}\label{thm:LowerboundMainH}
Let $(\gamma(n))_n\subset(0,1-\delta)$ be a sequence such that $\lim_{n\to \infty} n\gamma(n)=0$. We have, for $X=\BOAF$ or $X=\BOAFS$,
\begin{align*}
\norm{  I_K  }_{e,X \to \BOAF  } &\geq    %\liminf_{n\to \infty} \inf_{r\in  (1-\gamma(n),1)      }  \frac{  \abs{I_K(f_n)(r) }}{\abs{f_n(r)}} \\
\liminf_{n\to \infty} \inf_{r\in  (1-\gamma(n),1)      } \abs{   C_{c_n,n}(r)   }
\end{align*}
and if condition (\ref{eq:ConditionForSimilarExpressionH}) holds, we obtain
\[
\norm{  I_K}_{e,X \to \BOAF  } \geq    \int_0^1         \liminf_{r\to 1}  \frac{   g(\phi_t(r))   }{g(r)}             T_t(r)  dt    .         
\]
\end{thm}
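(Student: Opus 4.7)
The plan is to exploit the test sequence $f_n := f_{c_n,n}/\norm{f_{c_n,n}}_{\BOAF}$: by construction it lies in the unit ball of $\BOAFS \subset \BOAF$ and, because of the factor $z^n$, converges to zero uniformly on compact subsets of $\D$. First I would record the preliminary bound $\norm{f_{c_n,n}}_{\BOAF} \leq 2^{c_n} \to 1$ (using $\abs{g}\BOAFW \leq 1$ together with $\abs{z^n(1-z)^{c_n}}\leq 2^{c_n}$), which combined with (\ref{AnUsefulIneqFor}) forces $\norm{f_{c_n,n}}_{\BOAF}\to 1$. Then, appealing to the standard Montel-type fact that any compact $L\in\mathcal L(X,\BOAF)$ satisfies $\norm{L f_n}_{\BOAF}\to 0$ for norm-bounded sequences that are u.c.s.\ null, I would deduce
\[
\norm{I_K}_{e,X\to\BOAF} \geq \liminf_{n\to\infty} \norm{I_K f_n}_{\BOAF}.
\]

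The second step is an explicit calculation of $I_K f_{c_n,n}$ on the radial interval $(\delta,1)$. Substituting $f_{c_n,n}(w)=w^n g(w)(1-w)^{c_n}$ into $I_K(f)(z)=\int_0^1 T_t(z) f(\phi_t(z))\,dt$ and using the identity
\[
1-\phi_t(z) \;=\; \frac{1-z}{\frac{1}{1-t}-z}
\]
produces the factorization $I_K(f_{c_n,n})(r) = f_{c_n,n}(r)\,C_{c_n,n}(r)$ for $r\in(\delta,1)$. Because a sup of a product is at least the inf of one factor times the sup of the other, this gives
\[
\norm{I_K f_n}_{\BOAF} \;\geq\; \sup_{r\in(1-\gamma(n),1)} \abs{f_n(r)}\BOAFW(r)\,\abs{C_{c_n,n}(r)} \;\geq\; \frac{1-\frac{1}{n}}{\norm{f_{c_n,n}}_{\BOAF}}\,\inf_{r\in(1-\gamma(n),1)}\abs{C_{c_n,n}(r)},
\]
and since the prefactor tends to $1$, combining with the previous bound proves the first inequality.

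For the strengthened bound under (\ref{eq:ConditionForSimilarExpressionH}), the integrand defining $C_{c_n,n}(r)$ is nonnegative on $(\delta,1)$, which lets me drop the absolute value and apply Fatou to the pointwise infimum $h_n^-(t):=\inf_{r\in(1-\gamma(n),1)} h_n(r,t)$ of the integrand:
\[
\liminf_{n\to\infty}\inf_{r\in(1-\gamma(n),1)} C_{c_n,n}(r) \;\geq\; \int_0^1 \liminf_{n\to\infty} h_n^-(t)\,dt.
\]
The main obstacle here is to show that the two auxiliary factors $\bigl(\tfrac{1}{1-t}-r\bigr)^{-c_n}$ and $(\phi_t(r)/r)^n$ tend uniformly to $1$ on $r\in(1-\gamma(n),1)$. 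For the first factor this follows from $c_n\to 0$ together with the bracketing by $(1-t)^{c_n}$ and $\bigl((1-t)/t\bigr)^{c_n}$; for the second factor a Taylor expansion at $r=1$ gives $\log(\phi_t(r)/r) = O_t(1-r)$, so that $\abs{n\log(\phi_t(r)/r)} \leq C_t\, n\gamma(n)\to 0$ precisely because of the hypothesis $n\gamma(n)\to 0$. Once these factors are neutralized, the $r$-infimum of $(g(\phi_t(r))/g(r))T_t(r)$ over the shrinking interval $(1-\gamma(n),1)$ converges to $\liminf_{r\to 1}(g(\phi_t(r))/g(r))T_t(r)$, which yields the second inequality.
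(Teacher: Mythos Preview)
Your proposal is correct and follows essentially the same route as the paper's proof: the factorization $I_K(f_{c_n,n})(r)=f_{c_n,n}(r)\,C_{c_n,n}(r)$, the estimate via \eqref{AnUsefulIneqFor} and $\norm{f_{c_n,n}}_{\BOAF}\leq 2^{c_n}$, and the Fatou step after showing the auxiliary factors $(\tfrac{1}{1-t}-r)^{-c_n}$ and $(\phi_t(r)/r)^n$ tend to $1$ uniformly on $(1-\gamma(n),1)$ all match the paper. One phrasing to tighten: the ``Montel-type fact'' you invoke is only standard for $X=\BOAFS$; for $X=\BOAF$ the paper uses that the particular sequence $f_n$ lies in $\BOAFS$, deduces $f_n\to 0$ weakly in $\BOAFS$, and then transfers this to $\BOAF$ via the continuous inclusion---bounded u.c.s.\ null sequences in $\BOAF$ are in general only weak-$*$ null, which is not enough to kill $\norm{Lf_n}$ for an arbitrary compact $L\in\mathcal L(\BOAF)$.
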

\begin{proof}

We have    
\[
\inf_{1-\gamma(n)<r<1} \left( \frac{\phi_t(r)}{r}\right)^n \geq \left( \phi_t(1-\gamma(n))\right)^n \to 1
\]
as $n\to\infty$. Also,
\[
\inf_{1-\gamma(n)<r<1} \left( \frac{\phi_t(r)}{r}\right)^n  \leq \frac{1}{  (1-\gamma(n))^n  } \to 1
\]
as $n\to\infty$, and hence, the second statement follows from the first by taking the infimum inside the integral followed by Fatou's lemma.
For the first statement we have
\begin{align*}
I_K(f_n)(r) &= \int_0^1 f_n(\phi_t(r)) T_t(r) dt \\ %&= \int_0^1 %f_n(x) K(r,x) dx \\
%&=   \int_{x_r^{-1}(0,1)}   f_n(  x_r(t)  ) x_r'(t) K(r,x_r(t)) dt\\
& = \frac{   (1-r)^{c_n}  }{      \norm{   f_{c_n,n}   }_{\BOAF}        }  \int_0^1 \phi_t(r)^n   g(\phi_t(r))  \frac{(1-t)^{c_n}   }{((t-1)r+1) ^{c_n}  }   T_t(r)   dt\\
& = f_n(r)  \frac{1}{r^n g(r)} \int_0^1 \phi_t(r)^n g(\phi_t(r))   (\frac{1}{1-t}-r)^{-c_n}  T_t(r)   dt\\
&=f_n(r) C_{{c_n},n}(r).
\end{align*}
Observe that the first equality does not require $\int_0^1 \norm{\delta_s}_{X^*} ds <\infty$, because the integration path $[0,1)$ is mapped onto $[0,1)$. Now, it follows from condition (\ref{eq:WeightCondUpperBoundWeakH}) and (\ref{AnUsefulIneqFor}) that

\begin{align*}
\norm{I_K f_n}_{\BOAF} &\geq  \sup_{r\in (1-\gamma(n),1)}   \abs{f_n(r) C_{{c_n},n}(r)}\BOAFW(r) \geq  \sup_{r\in (1-\gamma(n),1)}    \abs{f_n(r)} \BOAFW(r)  \inf_{r'\in  (1-\gamma(n),1)                   }   \abs{  C_{{c_n},n}(r')} \\ 
&\geq    \frac{1}{          \norm{   f_{c_n,n}   }_{\BOAF}       }\left(1- \frac{1}{n} \right)    \inf_{r'\in  (1-\gamma(n),1)                   }   \abs{  C_{{c_n},n}(r')} \geq    \frac{1}{       2^{c_n}     }\left(1- \frac{1}{n} \right)    \inf_{r'\in  (1-\gamma(n),1)                   }   \abs{  C_{{c_n},n}(r')} \\ 
%&=    \sup_{r\in (1-\gamma(n),1)} \frac{ r^n (1-r)^{c_n}    }{            \norm{   f_{c_n,n}   }_{\BOAF}           }              \abs{g(r)} \BOAFW(r)  \inf_{r'\in  (1-\gamma(n),1)                   }   \abs{  C_{{c_n},n}(r')} \\
%& \geq  \inf_{r'\in  (1-\gamma(n),1)                   }   \abs{  C_{{c_n},n}(r')}       \lim_{n\to\infty }\sup_{r\in (1-\gamma(n),1)} \frac{ r^n (1-r)^{c_n}    }{            \norm{   f_{c_n,n}   }_{\BOAF}           }              \abs{g(r)} \BOAFW(r)   \\
\end{align*}
so that
\begin{align*}
\liminf_{n\to \infty}\norm{I_K f_n}_{\BOAF} & \geq \liminf_{n\to \infty} \inf_{r'\in  (1-\gamma(n),1)                   }   \abs{  C_{{c_n},n}(r')}.
\end{align*}
Since $(f_n)_n\subset \BOAFS$ is a bounded sequence that  converges to zero uniformly on compact subsets of $\D$, a standard argument yields that  
$f_n\to 0$ weakly in $\BOAFS$ (see \cite{MR}).  Therefore, for every compact operator $L\in \mathcal L(\BOAFS)$ we have  $\norm{L f_n}_{\BOAF}\to 0$ as $n\to \infty$, which gives the statement for $X=\BOAFS$. Moreover, continuity of the inclusion map $\BOAFS \xhookrightarrow{} \BOAF$ implies that $f_n\to 0$ weakly in $\BOAF$ and this completes the proof.

\end{proof}

\begin{cor}\label{cor:EssentalNormOnH}
If condition (\ref{eq:StrongCorrelationBetween_v_and_gH}) holds, then we have for $X=\BOAF$ or $X=\BOAFS$,
\[
\norm{  I_K  }_{e,X \to \BOAF  } = \int_0^1 \limsup_{z\to 1} \frac{\BOAFW(z)}{\BOAFW(\phi_t(z))} T_t(z) dt.
\] 
\end{cor}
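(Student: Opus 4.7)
The plan is to sandwich $\norm{I_K}_{e, X \to \BOAF}$ between the upper bound of Theorem \ref{thm:Upperbound} and the lower bound of Theorem \ref{thm:LowerboundMainH}, and then show that condition (\ref{eq:StrongCorrelationBetween_v_and_gH}) forces these two bounds to agree. The corollary is therefore a pure sandwich argument rather than a new estimate, and all the analytic work is already done in the two preceding theorems.

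First I would invoke Theorem \ref{thm:Upperbound}, valid for both $X=\BOAF$ and $X=\BOAFS$, to obtain
\[
\norm{I_K}_{e, X \to \BOAF} \leq \int_0^1 \limsup_{z \to 1} \frac{\abs{T_t(z)} \BOAFW(z)}{\BOAFW(\phi_t(z))} \, dt.
\]
Next, I would observe that (\ref{eq:StrongCorrelationBetween_v_and_gH}) implies the weaker (\ref{eq:ConditionForSimilarExpressionH}): for each fixed $t\in(0,1)$ both factors in $\frac{g(\phi_t(r))}{g(r)} T_t(r)$ converge as $r\to 1$ to non-negative real numbers, so their product lies in $[0,\infty)$ in the limit. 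Thus Theorem \ref{thm:LowerboundMainH} yields
\[
\norm{I_K}_{e, X \to \BOAF} \geq \int_0^1 \lim_{r \to 1} \frac{g(\phi_t(r))}{g(r)} T_t(r) \, dt.
\]

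To close the sandwich, fix $t \in (0,1)$ and set $L_t := \lim_{z\to 1} T_t(z) \in [0,\infty)$ and $M_t := \limsup_{z\to 1} \frac{\BOAFW(z)}{\BOAFW(\phi_t(z))} < \infty$, both supplied by (\ref{eq:StrongCorrelationBetween_v_and_gH}). Since $T_t(z)\to L_t$, also $\abs{T_t(z)}\to L_t$; squeezing $\abs{T_t(z)}$ between $L_t \pm \eps$ for $z$ sufficiently close to $1$ (and handling $L_t=0$ by the trivial bound $\abs{T_t(z)} \frac{\BOAFW(z)}{\BOAFW(\phi_t(z))} \le (M_t+\eps)\abs{T_t(z)}$ near $1$) gives
\[
\limsup_{z\to 1} \frac{\abs{T_t(z)} \BOAFW(z)}{\BOAFW(\phi_t(z))} = L_t M_t.
\]
By the first identity in (\ref{eq:StrongCorrelationBetween_v_and_gH}), also $\lim_{r\to 1} \frac{g(\phi_t(r))}{g(r)} = M_t$, so the lower-bound integrand equals $L_t M_t$ as well. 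Consequently both sides of the claimed identity reduce to $\int_0^1 L_t M_t \, dt$, which matches $\int_0^1 \limsup_{z\to 1} \frac{\BOAFW(z)}{\BOAFW(\phi_t(z))} T_t(z) \, dt$ since $T_t$ has a non-negative real limit at $1$.

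I do not anticipate any genuine obstacle beyond this bookkeeping. The only routine check is to confirm that the integrability and weight hypotheses underpinning the two invoked theorems, namely (\ref{eq:EssentialNormBddH}) and (\ref{eq:ConditionForUpperBoundOfIKInfty}) for the upper bound and (\ref{eq:WeightCondUpperBoundWeakH}) for the lower bound, are in force together with (\ref{eq:StrongCorrelationBetween_v_and_gH}); these are the standing assumptions of Sections \ref{sec:UpperBound} and \ref{sec:LowerBoundOfIKOnHInfty} and hence available for free.
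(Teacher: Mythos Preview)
Your proposal is correct and follows essentially the same route as the paper: both arguments sandwich $\norm{I_K}_{e,X\to\BOAF}$ between the upper bound from Theorem~\ref{thm:Upperbound} and the lower bound from Theorem~\ref{thm:LowerboundMainH}, then use the two parts of (\ref{eq:StrongCorrelationBetween_v_and_gH}) to factor each integrand as $L_tM_t$ and conclude equality. The paper's version is simply terser, citing the two theorems and the condition without spelling out the $L_tM_t$ splitting, whereas you make that step explicit.
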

\begin{proof}
The upper bound for the essential norm given in Theorem \ref{thm:Upperbound} can now be written as
\[
\int_0^1  \limsup_{z\to 1} \frac{\BOAFW(z)}{\BOAFW(\phi_t(z))} \lim_{w\to1}T_t(w) dt
\]
using the second part of condition  (\ref{eq:StrongCorrelationBetween_v_and_gH}). The corollary now follows immediately from the first part of  condition (\ref{eq:StrongCorrelationBetween_v_and_gH}) and the second statement in Theorem \ref{thm:LowerboundMainH}.

\end{proof}

\section{Examples}\label{sec:Examples}
In this section we consider the standard weights, $v(z) = M_\alpha (1-\abs{z}^2)^\alpha, \ \alpha >-1$, where $M_\alpha$ is a normalization constant. Many results will, however, demand $\alpha \geq 0$ or $\alpha\in(0,1)$. %Moreover, we will use a variable $p_0$, which takes the value $p$ if we are dealing with spaces $A^p_\alpha$ and $p_0=1$ when spaces $H^\infty_\alpha$ are considered. 
The constant $M_\alpha$ is irrelevant for the quotient $\frac{v(z)}{v(\phi_t(z))}$, so we will use $M_\alpha=1$ in the calculations done below, although the more correct way would be to add the factor $\frac{1}{M_\alpha}$ to the function $g^{p_0}$ defined below. 

Let $\alpha\geq 0$ and $g(z)=(2(1-z))^{-\frac{\alpha}{p_0}}$ ($p_0=1$ for $H^\infty_\alpha$ and $p_0=p$ for $A^p_\alpha$). 
%\begin{rem}[FUNGERAR INTE!!!!!!!!!]
%We could as well have chosen $g(z) = M_\alpha (1-z^2)^{-\frac{\alpha}{p_0}}$, which satisfies
%\[
%v(z) = \frac{1}{g(\abs{z})^{p_0}}= (1-\abs{z}^2)^\alpha.  
%\]
%This first equality shows us that certain weights $v$ are induced by an analytic function $g$, which in many cases happen to satisfy %the conditions given in Section \ref{sec:LowerBoundOfIKOnAp} and \ref{sec:LowerBoundOfIKOnHInfty}.
%\end{rem}
Clearly, $g$ is real valued on $[0,1)$. Using the Julia-Carathéodory theorem, we obtain
\begin{equation}\label{eq:CEV_PART_1}
\begin{split}
\limsup_{z\to 1}  \frac{v(z)}{v(\phi_t(z))}&= \limsup_{z\to 1} \left(  \frac{1-\abs{z}^2} {1-\abs{\phi_t(z)}^2}  \right)^{\alpha}  = \phi_t'(1) ^{-\alpha}  \\
&=  \lim_{z\to 1} \left(   \frac{2(1-z)} {2(1-\phi_t(z))}     \right)^{\alpha}       = \lim_{z\to 1}  \left(   \frac{g(\phi_t(z))} {g(z)}     \right)^{p_0 },
\end{split}
\end{equation}   
which indeed shows that the first part of condition (\ref{eq:StrongCorrelationBetween_v_and_gA}) and (\ref{eq:StrongCorrelationBetween_v_and_gH}) are satisfied for this particular pair of functions $(v,g)$. Let $\Omega := \frac{1}{2}\left(\frac{\max\{1-\alpha,1\}}{p}+ \frac{2}{p}\right)$ and notice that %$\Omega\in (\frac{1}{p},\frac{2}{p})$
$\Omega + \frac{\alpha}{p}\in (\frac{3+\alpha}{2p},\frac{2+\alpha}{p})\subset (\frac{1}{p},\frac{2+\alpha}{p})$ when $\alpha >-1$.

\begin{lem}\label{lem:ProvingCAIA}
Let $-1<\alpha<p-2$. The function 
\[
f_c: z\mapsto \frac{g(z)}{(1-z)^c} =\frac{2^{-\frac{\alpha}{p}}}{(1-z)^{c+\frac{\alpha}{p}}}  , \ c\in \left(\Omega ,\frac{2}{p}\right) 
\]
satisfies condition (\ref{eq:ConditionApproximateidentityA}).
\end{lem}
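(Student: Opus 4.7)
The plan is to recognize that, up to a multiplicative constant,
\[
\norm{f_c}_{A^p_v}^p = C_\alpha \int_\D \frac{(1-\abs{z}^2)^\alpha}{\abs{1-z}^{pc+\alpha}}\, dA(z),
\]
so both assertions in the lemma reduce to analyzing this integral as a function of $c$. Both the integrability (for $c<2/p$) and the divergence (as $c\to(2/p)^-$) are classical Forelli--Rudin type facts, and the plan is to verify them directly by switching to the local coordinate $1-z = \rho e^{i\psi}$ in a neighbourhood of $z=1$.

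For the membership $f_c \in A^p_v$, I would decompose $\D = (\D \setminus B(1,\tfrac12)) \cup (\D \cap B(1,\tfrac12))$. On the first piece the integrand is bounded by a constant multiple of $(1-\abs{z}^2)^\alpha$, whose integral is finite since $\alpha > -1$. On the second piece, a non-tangential geometry argument gives $1 - \abs{z}^2 \asymp \rho$ uniformly on any fixed cone with vertex at $1$, while $dA \asymp \rho\, d\rho\, d\psi$. A short calculation then shows that the local integrand behaves like $\rho^{1 + \alpha - (pc+\alpha)} = \rho^{1-pc}$, and the integral is finite as soon as $1-pc > -1$, i.e.\ $pc<2$. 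This is exactly our hypothesis $c<2/p$, so the lower bound $c>\Omega$ is not used here (it enters only through the other structural conditions such as (\ref{eq:EquivToIKBounded})).

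For the divergence as $c \to (2/p)^-$, my plan is to apply monotone convergence on the set $E := \set{z\in\D : \abs{1-z} < 1}$. On $E$, the integrand $(1-\abs{z}^2)^\alpha \abs{1-z}^{-pc-\alpha}$ is pointwise non-decreasing in $c$, since $\abs{1-z}<1$ makes a larger negative exponent produce a larger value; its pointwise limit at $c = 2/p$ is $(1-\abs{z}^2)^\alpha \abs{1-z}^{-2-\alpha}$. The same near-boundary calculation, now with the exponent $1-pc$ replaced by $-1$, produces a non-integrable $\rho$-singularity on any fixed non-tangential cone, so the limiting integral is $+\infty$. Monotone convergence then forces $\norm{f_c}_{A^p_v} \to \infty$.

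The only computations of substance are the cone estimate $1 - \abs{z}^2 \asymp \abs{1-z}$ on a fixed non-tangential region and the standard angular estimate $\int_{-\pi}^{\pi} \abs{1-re^{i\theta}}^{-s}\, d\theta \asymp (1-r)^{1-s}$ for $s>1$; both are routine. I do not anticipate any real obstacle: the lemma is essentially a restatement of the fact that $c=2/p$ is the precise integrability threshold for $(1-z)^{-c-\alpha/p}$ in the $A^p_\alpha$-norm, and the symmetric role of $c$ on the set $E$ is what makes the divergence direction as clean as the integrability direction.
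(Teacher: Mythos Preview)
Your overall strategy is sound and close to the paper's; both directions reduce to the behaviour of the Forelli--Rudin integral $\int_\D (1-|z|^2)^\alpha |1-z|^{-(pc+\alpha)}\,dA(z)$ near $z=1$. One point needs tightening, and one step differs from the paper in a mildly interesting way.

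\textbf{Membership (upper bound).} Your cone estimate $1-|z|^2\asymp\rho$ only holds on a \emph{fixed} sector $|\psi|\le\psi_0<\pi/2$; on all of $\D\cap B(1,\tfrac12)$ it fails, since writing $1-z=\rho e^{i\psi}$ one has $1-|z|^2=\rho(2\cos\psi-\rho)$, which is $o(\rho)$ as $\psi\to\pm\pi/2$. So the sentence ``the local integrand behaves like $\rho^{1-pc}$'' does not cover the tangential part of $\D\cap B(1,\tfrac12)$. You can fix this either by a separate computation on the region $\psi_0<|\psi|<\pi/2$ (substitute $\rho=2t\cos\psi$ to get a Beta integral times $\int (2\cos\psi)^{2-pc+\alpha}\,d\psi<\infty$), or---more simply---by using the angular estimate you already list: integrating first in $\theta$ with $z=re^{i\theta}$ gives $\norm{f_c}_{A^p_\alpha}^p\asymp\int_0^1(1-r)^{1-pc}\,dr$, finite exactly when $pc<2$. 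The paper takes this second route and in fact devotes most of its proof to establishing the angular estimate uniformly in the exponent.

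\textbf{Divergence (lower bound).} Your monotone convergence argument on $E=\{z\in\D:|1-z|<1\}$ is correct and is a pleasant shortcut. The paper instead computes an explicit lower bound $\norm{f_c}_{A^p_\alpha}^p\gtrsim \frac{d_0^{2-cp}}{2-cp}$ by integrating over a fixed Stolz triangle with vertex at $1$; this gives the same divergence as $c\to 2/p$ with a quantitative rate, whereas your argument avoids any computation at the cost of being non-quantitative. Both are perfectly adequate for the stated lemma.

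Your remark that the constraint $c>\Omega$ plays no role in the membership is correct; its only function in the paper's proof is to guarantee $pc+\alpha>1$ so that the asymptotic form of the angular estimate applies (for smaller $c$ the angular integral is simply bounded and membership is even easier).
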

\begin{proof}
For $1<Q_L<Q_U<\infty$, we will show that
\begin{equation}\label{eq:usefulAsymp}
\int_0^{2\pi} \frac{d\theta}{\abs{1-re^{i\theta}}^q}\asymp \frac{1}{(1-r)^{q-1}}, \ q\in[Q_L,Q_U],
\end{equation} 
where the constants involved in $\asymp$ will depend on $Q_L$ and $Q_U$, but not on $q$. By putting $Q_L= \frac{3+\alpha}{2}$ and $Q_U = 2+\alpha$ so that $cp+\alpha \in [Q_L,Q_U]$, we have

\begin{align*}
\int_{\D} \abs{\frac{2^{-\frac{\alpha}{p}}}{(1-z)^{c+\frac{\alpha}{p}}}             }^p  dA_\alpha(z)     &\asymp   \int_0^1  \frac{      (1-r^2)^\alpha 2rdr    }{(1-r^2)^{cp+\alpha-1}} = \frac{1}{2-cp},
\end{align*}
where the constants in $\asymp$ are dependent only on $\alpha$. This proves the two remaining parts of the (\ref{eq:ConditionApproximateidentityA}) condition. Let us finish the proof by showing that (\ref{eq:usefulAsymp}) holds. To this end, let $\tan \frac{\theta }{2}=  t$, which yields $\cos\theta = \frac{1-t^2}{1+t^2}$ and let $R=\frac{r^2+1}{2r}$. With these notations we have 
\begin{align*}
 \int_0^{2\pi} \frac{1}{       \abs{1-re^{i\theta}}^q         }d\theta &=\int_0^{2\pi} \frac{1}{       \left( 1+r^2-2r\cos \theta        \right)^\frac{q}{2}         }d\theta  =\frac{2}{(2r)^\frac{q}{2}}\int_0^{\pi} \frac{1}{       \left( R-\cos \theta        \right)^\frac{q}{2}         }d\theta \\
&=\frac{2}{(2r)^\frac{q}{2}}\int_0^\infty \frac{1}{       \left( R-\frac{1-t^2}{1+t^2}    \right)^\frac{q}{2}         }\frac{2dt}{1+t^2} =\frac{4}{(2r)^\frac{q}{2}}\int_0^\infty  \frac{      (1+t^2)^{\frac{q}{2}-1}  dt    }{       \left( t^2(R+1) + R-1  \right)^\frac{q}{2}         }\\
&=\frac{4}{(2r(R-1))^\frac{q}{2}}\int_0^{\infty} \frac{      (1+t^2)^{\frac{q}{2}-1}  dt    }{       \left( t^2\frac{R+1}{ R-1}  +1  \right)^\frac{q}{2}         } \\
&=\frac{4}{(2r(R-1))^\frac{q}{2}}  \sqrt{  \frac{R-1}{ R+1}   }\int_0^\infty \frac{      (1+\frac{R-1}{ R+1}t^2)^{\frac{q}{2}-1}  dt    }{       \left( t^2  +1  \right)^\frac{q}{2}         }\\
&=\frac{4}{  (1-r)^{q-1} (1+r)        }\int_0^{\infty}\frac{     \left(1+\left(\frac{1-r}{ 1+r}t\right)^2\right)^{\frac{q}{2}-1}  dt    }{       \left( t^2  +1  \right)^\frac{q}{2}         }.
\end{align*}
Moreover,
\[
  \int_0^\infty \frac{   dt   }{       \left( t^2  +1  \right)^\frac{q}{2}         }= \frac{1}{2}\int_0^1  x^\frac{q-3}{2}      (1-x)^{-\frac{1}{2}}  dx = \frac{1}{2} \beta(\frac{q-1}{2},\frac{1}{2}),
\]
where $(t^2+1)^{-1}=x$ so that $t=\sqrt{\frac{1}{x}-1}$ and $dt = - \sqrt{   \frac{x}{1-x}   }\frac{dx}{2x^2}$. Since 
\[
\int_0^{\infty}\frac{     \left(1+\left(\frac{1-r}{ 1+r}t\right)^2\right)^{\frac{q}{2}-1}  dt    }{       \left( t^2  +1  \right)^\frac{q}{2}         }
\]
is monotone with respect to $r\in[0,1]$, we obtain the inequalities
\[
 m_q:=\min\left\{  \frac{1}{2} \beta(\frac{q-1}{2},\frac{1}{2}), \frac{\pi}{2}  \right\} \leq \int_0^\infty \frac{      (1+\left(\frac{r-1}{ r+1}t\right)^2)^{\frac{q}{2}-1}  dt    }{       \left( t^2  +1  \right)^\frac{q}{2}         }\leq \max\left\{  \frac{1}{2} \beta(\frac{q-1}{2},\frac{1}{2}), \frac{\pi}{2}  \right\} =:M_q,
\]
from which it follows that
\[
2m_{Q_U} \leq   \int_0^{2\pi} \frac{    (1-r)^{q-1}     }{       \abs{1-re^{i\theta}}^q         }d\theta \leq 4M_{Q_L} .
\]

\end{proof}

\begin{rem}
In case $\alpha\geq 0$ one could show that the functions $v$ and $g$ satisfy the conditions given in Theorem \ref{thm:SomeSuffCond} and hence, deduce that condition (\ref{eq:ConditionApproximateidentityA}) holds. 
\end{rem}

Given the following lemma, which is almost trivial, we have proved that all conditions not involving $I_K$ are satisfied for the standard weights $v(z)=(1-\abs{z}^2)^\alpha$, where $0\leq \alpha<p-2$ for 
$A^p_\alpha$ and $0<\alpha<1$ for $H^\infty_\alpha$. Note that the only condition that does not allow $-1<\alpha<0$ in the Bergman case is (\ref{eq:StrongCorrelationBetween_v_and_gA}).
\begin{lem}
Let $v(z) = (1-\abs{z}^2)^\alpha$ and $g(z)= (2(1-z))^{-\alpha}, \alpha\geq 0$. Then conditions (\ref{eq:WeightCondUpperBoundWeakH}) and (\ref{eq:WeightCondUpperBoundStrong}) are satisfied. 
%By adding the restriction $0<\alpha<1$, we have that condition (\ref{eq:AGrowthConditionOngH}) is fulfilled. 
\end{lem}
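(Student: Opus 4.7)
The proof is essentially a direct computation, as the paper acknowledges. The plan is to rewrite the product $|g(z)|\BOAFW(z)$ in a form that makes both conditions transparent, then verify each condition separately.

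The key algebraic manipulation is to note that
\[
|g(z)|\BOAFW(z) = 2^{-\alpha}|1-z|^{-\alpha}(1-|z|^2)^\alpha = \left(\frac{(1-|z|)(1+|z|)}{2|1-z|}\right)^\alpha.
\]
For condition (\ref{eq:WeightCondUpperBoundWeakH}), I would estimate this factor from above using the triangle inequality $|1-z|\geq 1-|z|$ (so $(1-|z|)/|1-z|\leq 1$) and the trivial bound $1+|z|\leq 2$. Since $\alpha\geq 0$, the product is bounded by $1$ on $\D$.

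For condition (\ref{eq:WeightCondUpperBoundStrong}), I would handle the two parts separately. On the real axis, substituting $z=r\in[0,1)$ into the formula above gives
\[
|g(r)|\BOAFW(r) = \left(\frac{1+r}{2}\right)^\alpha,
\]
which clearly tends to $1$ as $r\to 1$. For the boundary limits at $e^{it}$ with $t\neq 0$, note that $|1-z|$ stays bounded below by a positive constant in a neighborhood of $e^{it}$, so $|g(z)|$ remains bounded there, while $\BOAFW(z)=(1-|z|^2)^\alpha\to 0$; thus the product tends to $0$.

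There is no real obstacle here — everything reduces to elementary inequalities once the product is written in the form above. The only minor subtlety is making sure the exponent $\alpha\geq 0$ is used correctly when applying the bound $(1-|z|)/|1-z|\leq 1$, since raising to a negative power would reverse the inequality.
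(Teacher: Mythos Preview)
Your proposal is correct and is precisely the natural verification; the paper itself omits the proof entirely, calling the lemma ``almost trivial,'' so there is nothing further to compare. One minor caveat: your argument that $(1-|z|^2)^\alpha\to 0$ as $|z|\to 1$ requires $\alpha>0$, so condition (\ref{eq:WeightCondUpperBoundStrong}) actually fails at the endpoint $\alpha=0$ (where $|g|\BOAFW\equiv 1$); this is a harmless imprecision in the lemma's stated range rather than a gap in your reasoning, since the $\BOAF$ setting already presupposes a weight vanishing at the boundary.
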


All that remains to show is that the kernel function $K$ satisfies the demands given in the Preliminaries (Section \ref{sec:Prelim}), Section \ref{sec:UpperBound} and conditions (\ref{eq:EquivToIKBounded}) and (\ref{eq:StrongCorrelationBetween_v_and_gA}) regarding Bergman spaces and (\ref{eq:StrongCorrelationBetween_v_and_gH}) regarding weighted Banach spaces of analytic functions. Notice that conditions  (\ref{eq:StrongCorrelationBetween_v_and_gA}) and (\ref{eq:StrongCorrelationBetween_v_and_gH}) are identical regarding the kernel function part. In the next example, we will consider the Hilbert matrix operator $\HH$ on $A^p_\alpha, \ -1<\alpha<p-2,$ and on $H^\infty_\alpha , \ 0<\alpha<1$.

\begin{exam}

Let us consider the Hilbert matrix operator with the kernel $K(z,x) = \frac{1}{1-zx} $. The demands given in Section \ref{sec:Prelim} are clearly satisfied, i.e. $K$ is analytic on $\D\times \D$, $K(z,\cdot)\in H^\infty$ and $I_K:X\to X$ is a bounded operator whenever $X=A^p_\alpha,\alpha >-1, \ p>2+\alpha$ or $X=H^\infty_\alpha, \ 0 <\alpha<1$. We have 
\[
\lim_{z \to 1} T_t(z)=\lim_{z\to 1} x_z'(t)  K(z,x_z(t)) =  \lim_{z \to 1}  \left(\frac{1-z}{    (tz+1-z)^2   }  \frac{tz+1-z}{1-z}  \right)  = \frac{1}{t},
\]
which combined with (\ref{eq:CEV_PART_1}) proves the conditions (\ref{eq:StrongCorrelationBetween_v_and_gA}) and (\ref{eq:StrongCorrelationBetween_v_and_gH}). It is clear that $A^p_0 \subset A^p_\alpha$ for $\alpha \geq 0$ and using the evaluation functionals it can be seen that $H^\infty_{\alpha'} \subset A^p_0$, when $\alpha' <\frac{1}{p}$. Therefore,  condition (\ref{eq:ConditionForUpperBoundOfIKBergman}) for $\alpha\geq 0,\ p>2+\alpha$ follows from condition (\ref{eq:ConditionForUpperBoundOfIKInfty}), $0<\alpha'<1$, since we can for every $1<p<\infty$ choose $0<\alpha'<1$ such that  $\alpha' <\frac{1}{p}$ holds. Let us prove  (\ref{eq:ConditionForUpperBoundOfIKInfty}), $0<\alpha<1$. Using $T_t(z)  = w_t(z) = \frac{1}{1-(1-t)z}$, we have

\begin{align*}
 \norm{T_t\chi_{   \phi_t^{-1}(D_{\leq R,t})   }  }_{L^\infty_\alpha} &= \sup_{z\in \phi_t^{-1}(D_{\leq R,t})   } \abs{w_t(z)}(1-\abs{z}^2)^\alpha    \leq   \sup_{z\in\D } \abs{w_t(z)}(1-\abs{z}^2)^\alpha   \\
& \leq   2^\alpha \sup_{z\in\D } w_t(\abs{z})(1-\abs{z})^\alpha  =  2^\alpha \sup_{x\in[0,1)} \frac{(1-x)^\alpha}{1-(1-t)x}.
\end{align*}
%Let us determine the supremum. We have 
%\begin{align*}
%\frac{d}{dx} \frac{(1-x)^\alpha}{1-(1-t)x}  &= \frac{-\alpha (1-x)^{\alpha-1} (1-(1-t)x)     + (1-t)(1-x)^\alpha}{(1-(1-t)x)^2} \\
%& =(-\alpha (1-(1-t)x)    + (1-t)(1-x)   ) \frac{     (1-x)^{\alpha-1}    }{(1-(1-t)x)^2} \\
%&=(x(1-t)(\alpha -1 ) +      (1 -t -\alpha)  )  \frac{     (1-x)^{\alpha-1}    }{(1-(1-t)x)^2}.   
%\end{align*}
%It now follows that 
If $t<1-\alpha$ the supremum is attained at
\[
x = \frac{ (1 -t -\alpha)}{(1-\alpha)(1-t)},
\]
while it otherwise is attained at $0$. Moreover, $x=0$ gives us $1$ as the supremum and $ x = \frac{ (1 -t -\alpha)}{(1-\alpha)(1-t)}$ gives the value
\begin{align*}
\frac{\left(\frac{   (1-\alpha)(1-t)  -    (1 -t -\alpha)           }{         (1-\alpha)(1-t)         }\right)^\alpha}{\left(\frac{     (1-\alpha)(1-t)     -(1-t)(1 -t -\alpha)      }{           (1-\alpha)(1-t)          }\right)}   = (1-\alpha)^{1-\alpha} (1-t)^{1-\alpha}      \frac{       ( t\alpha )^\alpha     }{     t(1-t)         }  = (1-\alpha)^{1-\alpha} \alpha ^\alpha  \frac{t^{\alpha-1}}{(1-t)^\alpha}.
\end{align*}
Combining these results we obtain
\begin{align*}
 \norm{T_t\chi_{   \phi_t^{-1}(D_{\leq R,t})   }  }_{L^\infty_\alpha} &\leq 2^\alpha \max \left\{1 ,  (1-\alpha)^{1-\alpha} \alpha ^\alpha  \frac{t^{\alpha-1}}{(1-t)^\alpha} \right\} 
\end{align*}
from which it immediately follows that (\ref{eq:ConditionForUpperBoundOfIKInfty}) holds.

Next, we prove that the conditions (\ref{eq:EquivToIKBounded}) and (\ref{eq:EssentialNormBddA}) hold, where we have added the restriction $\alpha\geq 0$ to be able to prove that (\ref{eq:EssentialNormBddA}) holds (see Remark \ref{rem:ProblemWithUnboundedWeights}). After that, we finally prove that condition (\ref{eq:EssentialNormBddH}) holds. Recall that $\Omega = \frac{1}{2}\left(\frac{\max\{1-\alpha,1\}}{p}+ \frac{2}{p}\right)$. Notice that $g(\abs{z})^{-p}\asymp v(z)$ and $T_t(z) \leq T_t(\abs{z})$. Since
\begin{align*}
\int_0^1  \sup_{c\in(\Omega,\frac{2}{p})}  \sup_{z\in\D}      \abs{   (\frac{1}{1-t}-z)^c T_t(z)  }        \sup_{w\in\D}    \abs{   \frac{g(\phi_t(w))}{g(w)}    }    dt & =
\int_0^1  \sup_{c\in( \Omega ,\frac{2}{p})}  \sup_{z\in\D}  \frac{\abs{ 1-(1-t)z    }^{c-1}}{(1-t)^c}       \sup_{w\in\D}    \abs{\frac{1-w}{1-\phi_t(w)}}^\frac{\alpha}{p}         dt \\
&\leq \int_0^1  \sup_{z\in\D}  \frac{\abs{ 1-(1-t)z    }^{\Omega-1}}{(1-t)^\frac{2}{p}}    \sup_{w\in\D}    \abs{\frac{(1-(1-t)w)}{ (1-t) }}^\frac{\alpha}{p}         dt \\
&\leq \int_0^1  \sup_{z\in\D}  \frac{\abs{ 1-(1-t)z    }^{\frac{1}{p}-1}}{ (1-t)^\frac{2+\alpha}{p} }  \max\{   t^\frac{\alpha}{p}   , (2-t)^\frac{\alpha}{p}    \}  dt \\
&\leq \int_0^1    \frac{        \max\{   t^{  \frac{1+\alpha}{p}-1 }   ,t^{\frac{1}{p}-1}  (2-t)^\frac{\alpha}{p}    \}        }{(1-t)^\frac{2+\alpha}{p}}       dt <\infty,
\end{align*}
 it follows from Remark \ref{rem:usefulRemark} that (\ref{eq:EquivToIKBounded}) and (\ref{eq:EssentialNormBddA}) are satisfied. To prove that (\ref{eq:EssentialNormBddH}) hold, we notice that $\BOAFW$ is decreasing, and hence,
\[
\frac{\BOAFW(z)}{\BOAFW(\phi_t(z))} \leq \frac{\BOAFW(z)}{\BOAFW(\phi_t(\abs{z}))} \asymp  \left(  \frac{1-\abs{z}}{1-\frac{t}{1-(1-t)\abs{z}   }}  \right)^\alpha  = \left(  \frac{1-(1-t)\abs{z}}{1-t}   \right)^\alpha.
\] 
Since $\abs{T_t(z)}\leq T_t(\abs{z}) \leq T_t(1)=\frac{1}{t}$ and $ \phi_t^{-1}(D_{>R_0,t}) \subset \D$ for all $0<R<1$, it follows that
\begin{align*}
  \int_0^1       \sup_{ z\in {B(1,\epsilon)}\cap \D }  \frac{  \abs{   T_t(z)   }   \BOAFW(z)  }{ \BOAFW(\phi_t(z)) }dt &\leq   \int_0^1       \sup_{ z\in \D }  \frac{  ((t-1)\abs{z}+1)^\alpha }{(t(1-t))^\alpha } dt \\
&\leq \int_0^1       \frac{1 }{(t(1-t))^\alpha} dt
\end{align*} 
is finite when $0<\alpha<1$.

 It now follows from Corollaries \ref{cor:EssentalNormOnA} and \ref{cor:EssentalNormOnH} for the spaces $A^p_\alpha,\ 0\leq \alpha<p-2,$ and $H^\infty_\alpha, \ 0<\alpha<1,$ respectively, that the lower bounds for the essential norms are the same as the upper bounds and the values are
\[
\norm{\HH}_{e,A^p_\alpha \to A^p_\alpha } =   \int_0^1 \left(\frac{t}{1-t}\right)^\frac{2}{p} \frac{    \phi_t'(1)   ^{-\frac{  \alpha  }{p}}  }{t}    dt =  \int_0^1  \frac{  t^{\frac{2+\alpha}{p}-1}    }{   (1-t)^\frac{2+\alpha}{p}   }    dt = \frac{\pi}{\sin\frac{(2+\alpha)\pi}{p}}
\]
and
\[
\norm{\HH}_{e,H^\infty_\alpha \to H^\infty_\alpha } =   \int_0^1\frac{    \phi_t'(1)   ^{-   \alpha }  }{t}    dt =  \int_0^1  \frac{  t^{\alpha-1}    }{   (1-t)^{\alpha} }   dt = \frac{\pi}{\sin(\alpha\pi)}.
\]
Notice that $\HH(H^0_\alpha)\subset H^0_\alpha$ by \cite[Theorem 2.1 (iii)]{AMS}, which gives the result
\[
\norm{\HH}_{e,H^0_\alpha \to H^0_\alpha } =   \frac{\pi}{\sin(\alpha\pi)}.
\]
We have also obtained the partial result for $\alpha\in(-1,0),\ p>2+\alpha$
\[
\norm{\HH}_{e,A^p_\alpha \to A^p_\alpha } \geq  \frac{\pi}{\sin\frac{(2+\alpha)\pi}{p}}.
\]
It can be shown that the expression for the upper bound given in Section \ref{sec:UpperBound} is infinite when $\alpha\in(-1,0),\ p>2+\alpha$. On the other hand, if we switch $\limsup_{z\to 1} \frac{v(z)}{v(\phi_t(z))}$ with the radial limit $\lim_{r\to 1} \frac{v(r)}{v(\phi_t(r))}$, we would attain $  \frac{\pi}{\sin\frac{(2+\alpha)\pi}{p}}$ as an upper bound. However, $\lim_{r\to 1} \frac{v(r)}{v(\phi_t(r))} = \liminf_{z\to 1} \frac{v(z)}{v(\phi_t(z))}$. The same lower bound (for the norm) was already found  by Karapetrovi\'c in \cite{BK2} using the same function $g$ except for the factor $2^{-\frac{\alpha}{p}}$. The calculations are, however, quite different, since for the specific kernel function $\frac{1}{1-zx}$ one can make use of hypergeometric functions as done in \cite{BK2}.
\end{exam}

\begin{exam}
Let $1<p<\infty$. The functions $f_c\in H^p$ given by $f_c(z)=\left(\frac{1}{(1-z)^c}\right) / \norm{ \frac{1}{(1-z)^c   }}_{H^p}, \ c<\frac{1}{p}$ converge weakly to $0$ as $c\to \frac{1}{p}$ and satisfy  
\[
\norm{\HH(f_c)  }_{H^p} \to \frac{\pi}{\sin\frac{\pi}{p}} = \norm{\HH}_{H^p\to H^p} \text{ as }  c\to \frac{1}{p}.
\]
The exact value of norm of $\HH$ on $H^p$ for $1 < p < \infty$ was established by Dostani\'c, Jevti\'c and  Vukoti\'c in \cite{DJV}.
This implies
\[
\norm{\HH}_{e,H^p\to H^p} = \norm{\HH}_{H^p\to H^p}  =  \frac{\pi}{\sin\frac{\pi}{p}}.
\]

\end{exam}

\section{Essential norm of a class of weighted composition operators}\label{sec:EssNormOfWCompOp}

We begin this section with some properties regarding the weighted composition operator acting on $A^p_v$, $H^\infty _v$ and $H^p$. The main result in this section is Theorem \ref{EssentialRaidusEqEssNorm}, which we use in Section \ref{sec:InterestingID} to obtain some interesting identities. Furthermore, we  assume that $1 \leq p <\infty$, except when stated otherwise.

In the spirit of \cite[Theorem 1]{GG} we have

\begin{thm}\label{thm:sufficientConditionForUCphiToBeCompact}
Assume that $\phi$ is an analytic self-map of $\D$ such that $\limsup_{z\to w}\abs{\phi(z)}=1$ implies $w\in\{a_j\in \partial \D, j=1,2,\ldots ,k\}$. Assume furthermore that $\psi\in H^\infty$ is continuous on  $\D\cup\left(\closed{\D}\cap \bigcup_{j=1}^k B(a_j,\rho)\right)  $ for some $\rho>0$ and that $\psi(a_j)=0$ for $j=1,2,\ldots , k$. 
\\
If $X=A^p_v$, $X=H^p$ or  $X=H^\infty_v$ and  $\psi C_\phi:X \to X$ is a bounded   weighted composition operator%,  where $v : \D\to\mathbb R_{\geq 0}$ is a weight such that $\norm{1}_X=1$
, then  $\psi C_\phi$ is compact. 
\end{thm}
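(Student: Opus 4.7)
The plan is to use the standard sequential characterisation of compactness on each of the three classes of spaces: a bounded operator $T$ on $X \in \{A^p_v, H^p, H^\infty_v\}$ is compact precisely when $\|T f_n\|_X \to 0$ for every bounded sequence $(f_n) \subset X$ converging to $0$ uniformly on compact subsets of $\D$. For the reflexive spaces $A^p_v$ and $H^p$ this follows from reflexivity and Montel's theorem, and for $H^\infty_v$ it is the criterion from \cite{MR} already used elsewhere in the paper. So I would fix such a sequence $(f_n)$ with $\|f_n\|_X \leq M$ and $f_n \to 0$ locally uniformly on $\D$, and prove $\|\psi C_\phi f_n\|_X \to 0$.

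The first task is to convert the hypotheses on $\phi$ and $\psi$ into a concrete decomposition of $\D$. Given $\epsilon > 0$, continuity of $\psi$ at each $a_j$ together with $\psi(a_j)=0$ produces an $r=r(\epsilon)\in(0,\rho]$ such that $|\psi(z)| < \epsilon$ on $V \coloneq \D \cap \bigcup_{j=1}^k B(a_j,r)$. A finite covering argument on the compact set $\partial\D \setminus V$, which avoids every $a_j$, combined with the assumption that $\limsup_{z\to w}|\phi(z)|=1$ forces $w\in\{a_j\}$, then yields a $\delta=\delta(\epsilon)>0$ with $|\phi(z)| \leq 1-\delta$ for all $z\in \D\setminus V$. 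Thus $\phi$ sends $\D\setminus V$ into the compact disc $\closed{B(0,1-\delta)}\subset \D$.

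With this decomposition I would split $\|\psi C_\phi f_n\|_X$ into contributions from $\D\setminus V$ and from $V$. On $\D\setminus V$, the image $\phi(\D\setminus V)$ sits inside a compact subset of $\D$, so $f_n\circ \phi \to 0$ uniformly there by the local uniform convergence of $(f_n)$; combined with $\psi$ bounded and the weight bounded, this part vanishes as $n\to\infty$. On $V$, the factor $|\psi|^p$ (for $A^p_v$, $H^p$) or $|\psi|$ (for $H^\infty_v$) can be pulled out to give a factor of at most $\epsilon$ (or $\epsilon^p$) times a quantity that the boundedness of $\psi C_\phi$ keeps uniformly bounded; a Hölder interpolation between the a priori bound $\|\psi C_\phi f_n\|_X \leq \|\psi C_\phi\|\,M$ and the pointwise bound $|\psi|<\epsilon$ on $V$ produces an estimate of order $\epsilon^\theta$ for a suitable $\theta>0$.

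Taking $\limsup_n$ leaves $O(\epsilon^\theta)$, and letting $\epsilon \to 0$ finishes the proof. The step I expect to be the main obstacle is this last $V$-estimate in the $H^\infty_v$ case: because there is no averaging, one has to show that $\sup_{z\in V} |\psi(z)|\,v(z)/v(\phi(z))$ can be made small, which requires combining the smallness of $|\psi|$ near $\{a_j\}$ coming from continuity with the uniform bound $\sup_{z\in\D}|\psi(z)|v(z)/v(\phi(z)) = \|\psi C_\phi\|_{H^\infty_v}$ coming from boundedness, most naturally via an interpolation of the form $|\psi|v/v\circ\phi \leq (|\psi|)^{\theta}\,(|\psi|v/v\circ\phi)^{1-\theta}\,(v/v\circ\phi)^{\theta}$ after controlling the last factor on $V$.
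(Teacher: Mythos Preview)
Your overall strategy---the sequential compactness criterion together with the decomposition of $\D$ into a neighbourhood $V$ of $\{a_j\}$ where $|\psi|<\epsilon$ and its complement where $\phi(\D\setminus V)$ sits in a compact subset of $\D$---is exactly the paper's approach, and your treatment of $\D\setminus V$ matches it.

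The discrepancy is in the $V$-estimate. Your instinct that this is the obstacle is right, but the interpolation you propose does not close the gap. After pulling out $|\psi|<\epsilon$ on $V$ you are left with $|f_n\circ\phi|$ (respectively $\int_V|f_n\circ\phi|^p v$), and it is the boundedness of $C_\phi$, not of $\psi C_\phi$, that controls this quantity. From the two facts you allow yourself---$|\psi|<\epsilon$ on $V$ and the pointwise a priori bound $|\psi(z)||f_n(\phi(z))|v(z)\le K$---one cannot deduce smallness: take $|\psi|\equiv\epsilon$ and $|f_n\circ\phi|\,v\equiv K/\epsilon$ on $V$ to see that both constraints hold while the product stays equal to $K$. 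In your proposed identity $|\psi|\,v/(v\circ\phi)=|\psi|^\theta\,(|\psi|\,v/(v\circ\phi))^{1-\theta}\,(v/(v\circ\phi))^\theta$ the last factor is precisely the quantity that blows up on $V$, so the interpolation does not help.

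The paper's resolution is simpler and avoids interpolation altogether: on the near-boundary set it bounds the contribution by $\epsilon\,\norm{C_\phi f_n}_X\le \epsilon\,\norm{C_\phi}_{X\to X}\sup_n\norm{f_n}_X$, i.e.\ it invokes boundedness of the \emph{unweighted} composition operator $C_\phi$ on $X$. This is automatic for $H^p$ and for the standard-weight spaces used in the applications. Replacing your interpolation step by this direct estimate makes the argument complete and identical to the paper's.
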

\begin{proof}
Let
\begin{equation}\label{almostContraction}
R:=\sup_{z\in\D\setminus \left( \bigcup_{j=1}^k B(a_j,\rho) \right) } \abs{\phi(z)}.
\end{equation}
Since $\limsup_{z\to w}\abs{\phi(z)}=1$ implies $w\in\{a_j\in \partial \D, j=1,2,\ldots ,k\}$ we have $R<1$. We may assume that $\psi\neq 0$ since the null operator is trivially compact. %Since $\phi \in H^\infty$, $\phi$ is continuous in $\D\cup \{a_j: j= 1 ,2,\ldots k\}$ (see \cite[Theorem 1.3]{PLD}). 

Let $(f_n)_n\subset X$ be a bounded sequence such that $ f_n \to 0$  uniformly on compact subsets of $\D$  as $n\to\infty$. Let $\epsilon>0$ and choose $N$ such that for $n>N$ it holds that
\[
\sup_{z\in \closed{B(0,R)}} \abs{f_n(z)} \leq \frac{\epsilon}{4\pi \norm{\psi}_\infty}.
\]
 If $\norm{\psi C_\phi (f_n)}_X\to 0$ as $n\to \infty$, then $\psi C_\phi$ is compact by \cite[Lemma 3.3]{CPPR}. Let $M=\norm{C_\phi}_{X\to X} \sup_n \norm{f_n}_X$. Since $\psi$ is zero on $\{a_j:j=1,2,\ldots ,k\}$ and continuous on \[\D\cup(\closed{\D}\cap \bigcup_{j=1}^k B(a_j,\rho)),\] there exists $\rho_0>0$ such that $\abs{\psi(z)}<\frac{\epsilon}{2M}$ when $z\in U_0 := \D\cap\bigcup_{j=1}^k  B(a_j,\rho_0)$. Let $U_c = \D\setminus U_0$. We have 
\[
\phi(r\D\setminus U_0)\subset \phi(U_c)\subset \closed{B(0,R)} \subset \D, \ r\in(0,1),
\]
 where $R$ is given in (\ref{almostContraction}). 
\\\\
Assume first that $X=H^p$. Then
\begin{align*}
2\pi \norm{\psi C_\phi(f_n)}_{H^p}^p &= \sup_{r<1}\int_0^{2\pi} \abs{\psi C_\phi(f_n)(re^{it})}^p  \abs{dt} = \sup_{r<1}\int_{r \partial \D}  \abs{\psi C_\phi(f_n)(z)}^p  \frac{|dz| }{r}.
\end{align*}
Therefore, assuming $n>N$ we have
\begin{align*}
\int_{r \partial \D}  \abs{\psi C_\phi(f_n)(z)}^p  \abs{dz}  &=\int_{r \partial \D\cap U_0}  \abs{\psi(z)C_\phi(f_n)(z)}^p   \abs{dz} + \int_{r \partial \D\cap U_c}  \abs{\psi(z)C_\phi(f_n)(z)}^p  \abs{dz} \\
&\leq \left(\frac{\epsilon}{2M}\right) ^p\int_{r \partial \D}  \abs{C_\phi(f_n)(z)}^p \abs{dz} + \norm{\psi}_\infty^p \int_{r \partial \D\cap U_c}  \abs{(f_n\circ \phi)(z)}^p   \abs{dz} \\
&\leq \left(\frac{\epsilon}{2M}\right)^p 2\pi r\norm{C_\phi (f_n)}^p_{H^p}+ \norm{\psi}_\infty^p \sup_{z\in U_c} \abs{(f_n\circ \phi)(z)}^p  \int_{r \partial \D\cap U_c}   \abs{dz}   \\
&\leq \left(\frac{\epsilon}{2}\right)^p2\pi r+ \norm{\psi}_\infty^p 2\pi r\sup_{z\in \closed{B(0,R)}} \abs{f_n(z)}^p <2\pi r\epsilon^p
\end{align*}
and the result follows. Assume now that $X=A^p_v$. Again for $n>N$, we have
\begin{align*}
\int_{\D}  \abs{\psi C_\phi(f_n)(z)}^p  dA_v(z)  &=\int_{U_0}  \abs{\psi C_\phi(f_n)(z)}^p  dA_v(z) + \int_{U_c}  \abs{\psi C_\phi(f_n)(z)}^p  dA_v(z) \\
&\leq \left(\frac{\epsilon}{2M}\right) ^p\int_{\D}  \abs{C_\phi(f_n)(z)}^p  dA_v(z) + \norm{\psi}_\infty^p \int_{U_c}  \abs{(f_n\circ \phi)(z)}^p  dA_v(z) \\
&\leq \left(\frac{\epsilon}{2M}\right)^p \norm{C_\phi (f_n)}^p_{A^p_v}+ \norm{\psi}_\infty^p \sup_{z\in U_c} \abs{(f_n\circ \phi)(z)}^p  \int_{U_c}   dA_v(z)   \\
&\leq \left(\frac{\epsilon}{2}\right)^p+ \norm{\psi}_\infty^p \sup_{z\in \closed{B(0,R)}} \abs{f_n(z)}^p <\epsilon^p.
\end{align*}
If $X=H^\infty_v$, then  we have

\begin{align*}
\norm{\psi C_\phi(f)}_{H^\infty_v} &= \sup_{z\in\D}\abs{\psi C_\phi(f_n)(z)v(z)} \leq  \sup_{z\in U_0}\abs{\psi C_\phi(f_n)(z)v(z)}  + \sup_{z\in  U_c}\abs{\psi C_\phi(f_n)(z)v(z)} \\
&\leq  \frac{\epsilon}{2M}\sup_{z\in\D}\abs{C_\phi(f_n)(z)v(z)}  + \norm{\psi}_\infty\sup_{z\in\mathbb U_c}\abs{(f_n\circ\phi)(z)}\norm{v}_\infty \\
&\leq  \frac{\epsilon}{2}  + \norm{\psi}_\infty\sup_{z\in \closed{B(0,R)}}\abs{f_n(z)} <\epsilon.
\end{align*}

\end{proof}

\begin{lem}\label{lem:UpperBoundForEssRadi}
Assume that $\phi$ is a univalent self-map of $\D$ such that $\limsup_{z\to w}\abs{\phi(z)}=1$ is equivalent to $w=a\in\partial \D$ and that the angular derivative $\phi'(z)$ exists at $a$. Assume furthermore that both $\phi'$ and $\psi\in H^\infty$  are continuous on  $\D\cup\left(\closed{\D}\cap  B(a,\rho)\right)  $ for some $\rho>0$. 
%\\
If $X=A^p_\alpha, \ \alpha>-1$ with $\ s=\frac{2+\alpha}{p}$, %or $X=H^\infty_\alpha, \ \alpha\geq 0$ with $s=\alpha$, where the subscript $\alpha$ stands for the standard weight $(1-\abs{z}^2)^\alpha$ normalized such that $\norm{1}_X=1$. 
then  
\[
\norm{\psi C_\phi}_{e,X\to X} \leq    \abs{         \frac{\psi(a)}{     \phi' (a)^s}             }.
\]
The same inequality holds for $X=H^p$ with $ s=\frac{1}{p}$, under the additional assumption that \\ $\lim_{r\to 1}\int_{\phi(r\partial \D)} \abs{   f(z)   }^p  \abs{  dz  } \leq 2\pi\norm{f}^p_{H^p}, \ f\in H^p$. 
\end{lem}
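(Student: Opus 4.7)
The plan is to reduce the problem to a pure composition operator. First I would write $\psi C_\phi = \psi(a) C_\phi + (\psi - \psi(a)) C_\phi$. Since $\psi-\psi(a)\in H^\infty$ is continuous on $\closed{\D}\cap B(a,\rho)$ and vanishes at $a$, while $\phi$ approaches $\partial\D$ only at $a$, Theorem~\ref{thm:sufficientConditionForUCphiToBeCompact} applies to give $(\psi-\psi(a))C_\phi$ compact on $X$. Hence $\|\psi C_\phi\|_{e,X\to X} \leq |\psi(a)|\cdot \|C_\phi\|_{e,X\to X}$, reducing the task to proving
\[
\|C_\phi\|_{e,X\to X} \leq \frac{1}{|\phi'(a)|^s}.
\]

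I would then use the compact approximation $(L_n)$ from Lemma~\ref{lem:AResultForALinearCombinationOfDilations}: each $C_\phi L_n$ is compact, so $\|C_\phi\|_{e,X\to X}\leq \limsup_n\|C_\phi(I-L_n)\|$. For $X=A^p_\alpha$ and $f\in B_X$, I split $\|C_\phi(I-L_n)f\|_{A^p_\alpha}^p$ as integrals over $B(a,\delta)\cap\D$ and over its complement in $\D$. The hypothesis on $\phi$ yields $|\phi(z)|\leq R_\delta<1$ on $\D\setminus B(a,\delta)$, so \eqref{CoreResultForALinearCombinationOfDilations} forces this piece to vanish as $n\to\infty$ uniformly in $f$. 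On the other piece, univalence of $\phi$ permits the change of variables $w=\phi(z)$, rewriting the integral as
\[
\int_{\phi(B(a,\delta)\cap\D)}|(I-L_n)f(w)|^{p}\rho(w)\,dA_\alpha(w),\qquad \rho(w):=\left(\frac{1-|\phi^{-1}(w)|^2}{1-|w|^2}\right)^{\!\alpha}|(\phi^{-1})'(w)|^2.
\]

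The critical step is showing $\rho(w)\to 1/|\phi'(a)|^{2+\alpha}$ uniformly as $w\to b:=\phi(a)$. Continuity of $\phi'$ on $\closed{\D}\cap B(a,\rho)$ gives the expansion $\phi(z)=b+\phi'(a)(z-a)+o(|z-a|)$, so $|(\phi^{-1})'(w)|\to 1/|\phi'(a)|$ and $|a-z|/|b-\phi(z)|\to 1/|\phi'(a)|$ uniformly as $z\to a$. Julia's lemma applied to $\phi$ provides the pointwise upper bound $(1-|z|^2)/(1-|\phi(z)|^2)\leq |\phi'(a)|\cdot|a-z|^2/|b-\phi(z)|^2$, converging uniformly to $1/|\phi'(a)|$; the symmetric application to the partial inverse $\phi^{-1}\colon\phi(\D)\to\D$ (whose angular derivative at $b$ is $1/\phi'(a)$) provides the matching lower bound, which is needed when $\alpha<0$. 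Thus for $\delta$ small enough $\rho\leq 1/|\phi'(a)|^{2+\alpha}+\epsilon$ on $\phi(B(a,\delta)\cap\D)$, so the main piece is at most $(1/|\phi'(a)|^{2+\alpha}+\epsilon)\|(I-L_n)f\|_{A^p_\alpha}^p\leq (1/|\phi'(a)|^{2+\alpha}+\epsilon)\|I-L_n\|^p$. Letting $n\to\infty$ (using $\limsup_n\|I-L_n\|\leq 1$) and then $\delta,\epsilon\to 0$ finishes the $A^p_\alpha$ case.

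The $H^p$ case is analogous, with area-change-of-variables replaced by arclength-change-of-variables on $\phi(r\partial\D)$: the Jacobian $1/|\phi'(\phi^{-1}(w))|$ tends to $1/|\phi'(a)|$, and the additional hypothesis $\lim_r\int_{\phi(r\partial\D)}|f|^p|dw|\leq 2\pi\|f\|_{H^p}^p$ supplies the global $H^p$-bound that here plays the role of the $A^p_\alpha$-norm. The main obstacle in both cases is the \emph{uniformity} (across all directions of boundary approach to $a$, not merely non-tangential) of the density estimate near $b$; it is precisely for this that the $C^1$-hypothesis on $\phi$ near $a$ in the statement is indispensable.
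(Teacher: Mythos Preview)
Your route differs from the paper's at the very first step. You split off $(\psi-\psi(a))C_\phi$ as compact and are then left to bound $\|C_\phi\|_{e,X\to X}$; the paper instead splits off $\eta C_\phi$ with
\[
\eta=\Bigl(\frac{\psi}{(\phi')^{s}}-\frac{\psi(a)}{\phi'(a)^{s}}\Bigr)(\phi')^{s}
\]
(univalence guarantees $\phi'\neq 0$ on $\D$, so $\eta\in H^\infty$ is continuous near $a$ with $\eta(a)=0$, and Theorem~\ref{thm:sufficientConditionForUCphiToBeCompact} applies). This reduces the problem to the \emph{norm} bound $\|(\phi')^{s}C_\phi\|_{X\to X}\leq 1$, not an essential-norm bound. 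For $A^p_\alpha$ that bound is a one-line consequence of the Schwarz--Pick inequality $|\phi'(z)|(1-|z|^2)\leq 1-|\phi(z)|^2$ together with the change of variables $w=\phi(z)$; for $H^p$ the additional hypothesis plays the role of the area change of variables. No $L_n$-approximation, no Julia inequality, and no asymptotic density analysis are needed. Your approach, by contrast, must manufacture compact approximants and control the boundary density $\rho$, which is considerably more work for the same conclusion.

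Two points in your argument require attention. First, the ``symmetric application'' of Julia's inequality to $\phi^{-1}\colon\phi(\D)\to\D$ is not legitimate as stated: Julia's inequality is for analytic self-maps of $\D$, and $\phi(\D)$ is in general a proper subdomain (for $\phi=\phi_t$ it is a disk internally tangent to $\partial\D$ at~$1$). Hence the lower bound on $(1-|z|^2)/(1-|\phi(z)|^2)$ that you correctly identify as needed when $-1<\alpha<0$ is not established; the $C^1$ expansion alone does not obviously control this ratio under tangential approach, since the $o(|z-a|)$ remainder can dominate $1-|z|^2\sim |z-a|^2$. Second, for $X=H^p$ you rely on the operators $(L_n)$ from Lemma~\ref{lem:AResultForALinearCombinationOfDilations}, but that lemma is proved only for $A^p_v$, $\BOAFS$ and $\BOAF$; an $H^p$ analogue would have to be supplied separately. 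The paper's decomposition sidesteps both issues entirely.
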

\begin{proof}
Since $\phi$ is univalent,  $\abs{\phi'(z)}>0$ for all $z\in\D$. Let $s\geq 0$ and $\eta= \left(  \frac{\psi}{                    ({\phi'})^s                } - \frac{\psi(a)}{              {\phi'(a)}^s               }   \right)({\phi'})^s $. Then $\eta C_\phi$ is compact according to Theorem \ref{thm:sufficientConditionForUCphiToBeCompact}. Therefore, we have
\begin{equation}\label{eq:EssRadiusOfWeightedCompOpBound}
\norm{\psi C_\phi}_{e,X\to X} = \norm{           \frac{        \psi(a){(\phi')}^s         }{             \phi'(a)^s        } C_\phi+\eta C_\phi         }_{e,X\to X} = \norm{           \frac{        \psi(a){(\phi')}^s         }{             \phi'(a)^s        } C_\phi     }_{e,X\to X} =  \abs{         \frac{\psi(a)}{     \phi' (a)^s}             }  \norm{   {(\phi')}^s C_\phi    }_{e,X\to X}.
\end{equation}
Moreover, for $X=A^p_\alpha$, we choose $s=\frac{2+\alpha}{p}$ to obtain
\begin{align*}
\pi \norm{(\phi')^s C_\phi (f) }_{ A^p_\alpha }^p &= \int_\D    \abs{        {\phi'(z)}^s C_\phi (f)(z)          }^p       (1+\alpha)(1-\abs{z}^2)^\alpha  dA(z)\\
&= \int_\D    \abs{ f\circ \phi(z)}^p      \abs{\phi'(z)}^\alpha (1+\alpha)(1-\abs{z}^2)^\alpha  \abs{\phi'(z)}^2 dA(z)\\
&\leq \int_{\D}    \abs{ f\circ \phi(z)}^p      (1+\alpha)(1-\abs{\phi(z)}^2)^\alpha  \abs{\phi'(z)}^2 dA(z)\\
&= \int_{\phi(\D)}    \abs{ f(w)}^p      (1+\alpha)(1-\abs{w}^2)^\alpha  dA(w)\leq  \pi\norm{f }_{A^p_\alpha}^p,
\end{align*}
where the first inequality is obtained  by the Schwarz–Pick lemma. The variable substitution, $w=\phi(z)$, holds due to $\phi$ being univalent. Therefore,
\[
\norm{(\phi')^s C_\phi}_{A_\alpha^p  \to A_\alpha^p} \leq 1.
\]
%If $X=H^\infty_\alpha$, then
%\begin{align*}
%\norm{(\phi')^s  C_\phi f}_{H^\infty_\alpha} &= \sup_{z\in \D} \abs{f(\phi(z))}  \abs{ \phi'(z)}^\alpha (1-\abs{z}^2)^\alpha %\\
 %&\leq   \sup_{z\in \D} \abs{f(\phi(z))}    (1-\abs{\phi(z)}^2)^\alpha \\
% & =  \sup_{z\in \phi(\D)} \abs{f(z)}   (1-\abs{z}^2)^\alpha \\
% & \leq  \norm{f}_{H^\infty_\alpha}
%\end{align*}
%and the result is obtained similarly to the Bergman case. 

For $X=H^p$ and $s = \frac{1}{p}$ we have
\begin{align*}
2\pi \norm{(\phi')^s C_\phi(f)}_{H^p}^p &= \sup_{r<1}\int_0^{2\pi} \abs{(\phi')^s C_\phi(f)(re^{it})}^p \abs{dt} = \sup_{r<1}\int_{r \partial \D}  \abs{(\phi')^s C_\phi(f)(z)}^p  \frac{|dz|}{r}.
\end{align*}
Therefore, using the additional assumption, we obtain
\begin{align*}
\int_{r \partial \D}  \abs{\phi'(z)^s C_\phi(f)(z)}^p  \abs{dz}  & =  \int_{r \partial \D}  \abs{ f(\phi(z))}^p  \abs{d\phi(z)}  =  \int_{\phi(r \partial \D)}  \abs{ f(w)}^p  \abs{dw} \leq   2\pi \norm{f}_{H^p}^p,
\end{align*}
and the   statement  follows as above.

\end{proof}

\begin{rem}
%$(a)$ Notice that it is enough for $\phi$ to be an analytic self-map (not necessarily univalent) in case of $X=H_\alpha^\infty$ for Lemma \ref{lem:UpperBoundForEssRadi} to hold. 

In this remark we give you some examples when $\lim_{r\to 1}\int_{\phi(r\partial \D)} \abs{   f(z)   }^p  \abs{  dz  } \leq 2\pi\norm{f}^p_{H^p}$ does not hold for every $f$. Let $\phi$ be a Riemann map that maps $\D$ onto a set $S$, where $S$ is a simply connected, nonconvex region, subset of $\D$ such that the length  $L(\partial S)>L(\partial \D)$. Then $\phi(\partial \D) = \partial  S$. It is now easy to see that, for $f\equiv1$, we have $2\pi \norm{f}^p_{H^p} = 2\pi < L(\partial S) = \lim_{r\to 1} L(\phi(r \partial \D)) = \lim_{r\to 1} \int_{\phi(r \partial \D)}  \abs{ f(w)}^p \frac{ \abs{dw}     }{r}$. 
Furthermore, to assume that $\phi(r\D)$ is convex for $r<1$ close to $1$ does not grant 
\[
\int_{\phi(r \partial \D)}  \abs{ f(w)}^p  \abs{dw} \leq   2\pi \norm{f}_{H^p}^p,
\]
although $L(\partial S)<L(\partial D)$. As an example, consider one of the curves 
\[
S_\theta = \{z=e^{it}:t\in \left(\theta,2\pi - \theta   \right) \}\cup \{\cos \theta +iy: y \in [-\sin \theta, \sin \theta ]  \}, \ \theta\in \left(0,  \frac{\pi}{2}\right)
\] 
and the function $(1-z)^\frac{2}{p}$.%MUST CHECK THIS!

\end{rem}

%The next lemma is Proposition 3.6 in \cite{GLW}.
%\begin{lem}\label{lowerBoundOfEssentialNorm}
%Let $\psi\in H^\infty$, $\phi$ be a self-map of $\D$ , $\norm{\delta_z}_{X^*}\to \infty$ as $\abs{z}\to 1^-$ and assume that the polynomials are dense in $X$. Then 
%\[
%\norm{\psi C_\phi}_e \geq \limsup_{\abs{z}\to 1} \frac{  \abs{\psi(z)}        \norm{\delta_{\phi(z)}}_{X^*}         }{           \norm{\delta_z}_{X^*}          }.
%\]
%\end{lem}

\begin{thm}\label{EssentialRaidusEqEssNorm}
Assume that $\phi$ is a self-map of $\D$ with a fixed point $a\in \partial\D$ such that $\limsup_{z\to w}\abs{\phi(z)}=1$ is equivalent to $w=a$ and that the angular derivative $\phi'(z)$ exists at $a$. Assume furthermore that both $\phi'$ and $\psi\in H^\infty$ are continuous on  $\D\cup\left(\closed{\D}\cap  B(a,\rho)\right)  $ for some 
$\rho>0$. 
\\
If $X=A^p_\alpha, \ \alpha>-1$ with $\ s=\frac{2+\alpha}{p}$ and $\phi$ is univalent or $X=H^\infty_\alpha, \ \alpha> 0$ with $s=\alpha$, %where the subscript $\alpha$ stands for the standard weight $(1-\abs{z}^2)^\alpha$ normalized such that $\norm{1}_X=1$. 
we have
\[
 \norm{\psi C_\phi}_{e, X \to X} =            \frac{           \abs{ \psi(a)  }       }{     \phi' (a)^s} = \limsup_{ z \to a} \frac{          \abs{\psi(z) }      (  1-\abs{z}^2  )^s  }{     (   1-\abs{\phi(z)}^2  )^s     } = \limsup_{\abs{\phi(z)}\to 1} \frac{          \abs{\psi(z) }      (  1-\abs{z}^2  )^s  }{     (   1-\abs{\phi(z)}^2  )^s     } .
\]
\end{thm}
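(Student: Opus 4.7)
The plan is to establish the equality in three stages: identify the three $\limsup$ expressions with $\abs{\psi(a)}/\phi'(a)^s$, deduce the upper bound on the essential norm, and then produce a matching lower bound via test functions concentrated at the fixed point.

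The identity among the three right-hand expressions follows from the Julia--Carathéodory theorem at $a$: the angular derivative existence gives $\lim_{z\to a}\frac{1-\abs{z}^2}{1-\abs{\phi(z)}^2}=1/\phi'(a)$ nontangentially, so combined with continuity of $\psi$ at $a$ the $\limsup$ equals $\abs{\psi(a)}/\phi'(a)^s$. The equivalence between $z\to a$ and $\abs{\phi(z)}\to 1$ inside the $\limsup$ follows from the standing hypothesis: any sequence with $\abs{\phi(z_n)}\to 1$ can only accumulate in $\closed{\D}$ at $a$. For the upper bound, Lemma \ref{lem:UpperBoundForEssRadi} applies verbatim when $X=A^p_\alpha$. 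For $X=H^\infty_\alpha$ I would replicate its factorization: write
\[
\psi C_\phi = \frac{\psi(a)}{\phi'(a)^\alpha}(\phi')^\alpha C_\phi + \eta C_\phi, \qquad \eta := \psi - \frac{\psi(a)}{\phi'(a)^\alpha}(\phi')^\alpha,
\]
so that $\eta(a)=0$ and $\eta C_\phi$ is compact by Theorem \ref{thm:sufficientConditionForUCphiToBeCompact}. It remains to check $\norm{(\phi')^\alpha C_\phi}_{H^\infty_\alpha \to H^\infty_\alpha}\leq 1$, which is a one-line consequence of the Schwarz--Pick inequality $\abs{\phi'(z)}(1-\abs{z}^2)\leq 1-\abs{\phi(z)}^2$.

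For the lower bound I would use test functions peaking at $w_n:=\phi(r_n a)$ with $r_n\to 1^-$. In the $H^\infty_\alpha$ case, set $f_n(z):=(1-\abs{w_n}^2)^\alpha/(1-\overline{w_n}z)^{2\alpha}$. The identity $\abs{1-\overline{w}z}^2\geq (1-\abs{w}^2)(1-\abs{z}^2)$ combined with $\abs{f_n(w_n)}(1-\abs{w_n}^2)^\alpha=1$ gives $\norm{f_n}_{H^\infty_\alpha}=1$, while $f_n\to 0$ uniformly on compacta, so $\norm{Lf_n}_{H^\infty_\alpha}\to 0$ for every compact $L$. Evaluating at $z_n=r_n a$ and applying Julia--Carathéodory yields
\[
\norm{\psi C_\phi f_n}_{H^\infty_\alpha}\geq \abs{\psi(z_n)}\,\frac{(1-\abs{z_n}^2)^\alpha}{(1-\abs{\phi(z_n)}^2)^\alpha}\longrightarrow \frac{\abs{\psi(a)}}{\phi'(a)^\alpha}.
\]
For $X=A^p_\alpha$ with $s=(2+\alpha)/p$ I would use normalized Bergman reproducing-kernel type functions $f_n(z):=c_n(1-\abs{w_n}^2)^{(2+\alpha)/p}/(1-\overline{w_n}z)^{2s}$ with $c_n$ chosen so that $\norm{f_n}_{A^p_\alpha}\to 1$; reflexivity makes $(f_n)$ weakly null. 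The change of variable $w=\phi(z)$, permitted by univalence, rewrites $\norm{\psi C_\phi f_n}^p_{A^p_\alpha}$ as an integral over $\phi(\D)$ where $\abs{(\phi^{-1})'(w)}^2(1-\abs{\phi^{-1}(w)}^2)^\alpha\to(1-\abs{w}^2)^\alpha \phi'(a)^{-\alpha-2}$ near $a$ and $\abs{\psi(\phi^{-1}(w))}\to\abs{\psi(a)}$, so concentration of $\abs{f_n}^p$ near $w_n$ delivers the limit $\abs{\psi(a)}^p/\phi'(a)^{sp}$.

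The main obstacle is the Bergman lower bound, where exact rather than merely equivalent constants are required: one must combine precise asymptotics for $\norm{f_n}_{A^p_\alpha}$ and for the integrals $\int_\D (1-\abs{w}^2)^\beta/\abs{1-\overline{w}z}^\gamma\,dA_\alpha(z)$ with uniform control of the Julia--Carathéodory quotient along nontangential approach to $a$, so that the concentration argument recovers $\abs{\psi(a)}^p/\phi'(a)^{sp}$ without spurious constants.
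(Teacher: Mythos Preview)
Your strategy matches the paper's for the two right-hand identities (Julia--Carath\'eodory plus the hypothesis characterizing where $\abs{\phi}\to 1$) and for the $A^p_\alpha$ upper bound (Lemma~\ref{lem:UpperBoundForEssRadi}). The differences are in the $H^\infty_\alpha$ case and in the $A^p_\alpha$ lower bound.

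For $X=H^\infty_\alpha$ the paper does not argue at all: it simply invokes Montes-Rodr\'iguez \cite[Theorem~2.1]{MR}, which gives the closed formula
\[
\norm{\psi C_\phi}_{e,H^\infty_\alpha\to H^\infty_\alpha}=\limsup_{\abs{\phi(z)}\to 1}\frac{\abs{\psi(z)}(1-\abs{z}^2)^\alpha}{(1-\abs{\phi(z)}^2)^\alpha}
\]
in one stroke. Your direct route is fine for the lower bound (the test functions $f_n$ lie in $H^0_\alpha$, so uniform convergence on compacta yields weak convergence), but your upper bound has a genuine gap: the factorization $\psi=\frac{\psi(a)}{\phi'(a)^\alpha}(\phi')^\alpha+\eta$ requires $(\phi')^\alpha$ to be a single-valued holomorphic function on $\D$, hence $\phi'\neq 0$ on $\D$. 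In Theorem~\ref{EssentialRaidusEqEssNorm} univalence is assumed only in the $A^p_\alpha$ case, not for $H^\infty_\alpha$, so this step is not justified as stated. The paper's citation of \cite{MR} sidesteps this issue entirely.

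For the $A^p_\alpha$ lower bound the paper again takes the short route, citing \cite[Proposition~3.6]{GLW} for the dual estimate
\[
\norm{\psi C_\phi}_{e,A^p_\alpha\to A^p_\alpha}\ \geq\ \limsup_{\abs{z}\to 1}\frac{\abs{\psi(z)}\,\norm{\delta_{\phi(z)}}_{(A^p_\alpha)^*}}{\norm{\delta_z}_{(A^p_\alpha)^*}},
\]
together with the exact identity $\norm{\delta_z}_{(A^p_\alpha)^*}=(1-\abs{z}^2)^{-s}$, so the quotient is exactly $(1-\abs{z}^2)^s/(1-\abs{\phi(z)}^2)^s$ with no stray constants. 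Your test functions $f_n$ are precisely the extremals for $\delta_{w_n}$, so you are re-proving this dual bound on the primal side; but your plan to compute $\norm{\psi C_\phi f_n}^p$ via the change of variable $w=\phi(z)$ and a concentration argument is needlessly delicate. The obstacle you flag disappears if instead you simply bound $\norm{\psi C_\phi f_n}_{A^p_\alpha}\geq \abs{\psi(z_n)f_n(\phi(z_n))}/\norm{\delta_{z_n}}$ and use $\abs{f_n(w_n)}=\norm{\delta_{w_n}}$: the multiplicative constants in $\norm{\delta_z}$ cancel in the quotient, which is exactly what the paper's approach exploits.
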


\begin{proof}
%Remember that $ \frac{1-\abs{z}   }{1-\abs{\phi(z)}  }  \leq  \frac{1+\abs{\phi(0)}    }{1-\abs{\phi(0)}}<\infty$. 
The last equality in the statement follows from the assumption that $\limsup_{z\to w}\abs{\phi(z)}=1$ if and only if $w=a$. Combining the assumptions with the Julia-Carathéodory Theorem \cite[Theorem 2.44]{CM} we have
\begin{align*}
 \limsup_{z\to a} \frac{          \abs{\psi(z) }      (  1-\abs{z}^2  )^s  }{     (   1-\abs{\phi(z)}^2  )^s     }   = \abs{\psi(a)} \left( \frac{  1+\abs{a} }{1+\abs{\phi(a)}      } \right)^s     \left( \limsup_{z\to a}  \frac{  1-\abs{z}    }{        1-\abs{\phi(z)}       }   \right)^s = \frac{      \abs{\psi(a)}     }{       \abs{\phi'(a)}^s   },
\end{align*}
which is the second equality in the statement. To obtain the essential norm we first consider the space $A^p_\alpha$. By combining the following standard approach (see  Proposition 3.6 in \cite{GLW})
\begin{align*}
\norm{\psi C_\phi}_{e, A^p_\alpha \to A^p_\alpha} &\geq \limsup_{\abs{z}\to 1} \frac{   \abs{\psi(z)}        \norm{\delta_{\phi(z)}     }_{(A^p_\alpha)^*}         }{           \norm{\delta_z}_{(A^p_\alpha)^*}          }  
 = \limsup_{\abs{z}\to 1} \frac{    \abs{ \psi(z)  }  (  1-\abs{z}^2  )^s  }{      (  1-\abs{\phi(z)}^2   )^s    } \\
&\geq  \limsup_{z\to a} \frac{          \abs{\psi(z) }      (  1-\abs{z}^2  )^s  }{     (   1-\abs{\phi(z)}^2  )^s     }  \\
\end{align*}
with Lemma  \ref{lem:UpperBoundForEssRadi} we obtain the first equality in the statement. Concerning $X=H^\infty_\alpha$, the result
\[
\norm{\psi C_\phi}_{e,H^\infty_\alpha \to H^\infty_\alpha}  = \limsup_{\abs{\phi(z)}\to 1} \frac{    \abs{ \psi(z)  }  (  1-\abs{z}^2  )^s  }{      (  1-\abs{\phi(z)}^2   )^s    }
\]
has  been proved in \cite[Theorem 2.1]{MR} by Montes-Rodriguez.  
\end{proof}

\begin{rem}
It is easy to see that
\[
\norm{\psi C_\phi}_{e, X \to X}\geq \max_{1\leq j\leq k}\{     \frac{      \abs{\psi(a_j)}     }{       \abs{\phi'(a_j)}^s   }       \}
\]
if there are a finite number of points $a_1,\ldots, a_k$ satisfying the assumptions of $a$ in Theorem \ref{EssentialRaidusEqEssNorm} except not necessarily being fixed points for $\phi$. 
\end{rem}

\section{Behavior of norm and essential norm of the Hilbert matrix operator}\label{sec:InterestingID}

As a consequence of our earlier results, we begin this section by pointing out an interesting identity that we can represent the essential norm of the operator $I_K$ on $A^p_\alpha$ and $H^\infty_\alpha$ as an integral average of essential norms of certain weighted composition operators. Moreover, we prove that this kind of equality does not hold for norms in general, using $\HH:A^p\to A^p,  p\in(2,2.7)$ as a counterexample. We also determine the exact norm of the Hilbert matrix operator on $H^\infty_{w_{\alpha}}$ for the  weight $w_\alpha(z) =(1-|z|)^\alpha$, which also coincides with its essential norm. Let us also point out that the previous results in this article concerning the weighted Bergman spaces and the weighted Banach spaces of analytic functions also hold for  the  weights   $w_\alpha$, where $ 0\leq \alpha <p-2<\infty$ and $0<\alpha<1 $, respectively.

 The respective main results are theorems \ref{remarkable} and \ref{weightedspaceresult}. We finish the paper with presenting a new approach on the problem of determining $\norm{\HH}_{A^p_\alpha\to A^p_\alpha}, \ p>2+\alpha \geq 2$.

First, let us  assume that all conditions regarding the function $K$ are satisfied. We also  assume that $T_t\in H^\infty, \ t\in(0,1)$. If
\[
\sup_{w\in\D} \sup_{z\in \D \setminus  B(1,\epsilon)  }\abs{K(z,w)}<\infty
\]
holds, then it follows from (\ref{eq:StrongCorrelationBetween_v_and_gA}) that $T_t\in H^\infty$. From Theorem \ref{EssentialRaidusEqEssNorm}  we get that 
\[
\norm{T_tC_{\phi_t}}_{e, A^p_\alpha \to A^p_\alpha} =  \lim_{z\to 1 }     T_t(z) \frac{t^s}{(1-t)^s}, \ s=\frac{2+\alpha}{p}.
\]
Now, we can use Corollary \ref{cor:EssentalNormOnA} to obtain  the remarkable result
\[
\norm{  \int_0^1 T_tC_{\phi_t}dt   }_{e, A^p_\alpha \to A^p_\alpha}    = \norm{I_K}_{e, A^p_\alpha \to A^p_\alpha} = \int_0^1 \norm{T_tC_{\phi_t}}_{e, A^p_\alpha \to A^p_\alpha} dt.
\]
Similarly, we obtain from Corollary \ref{cor:EssentalNormOnH} the corresponding result for $H^\infty_\alpha, \ 0<\alpha<1$,
\[
\norm{  \int_0^1 T_tC_{\phi_t}dt   }_{e, H^\infty_\alpha \to H^\infty_\alpha}    = \norm{I_K}_{e, H^\infty_\alpha \to H^\infty_\alpha} = \int_0^1 \norm{T_tC_{\phi_t}}_{e, H^\infty_\alpha \to H^\infty_\alpha} dt.
\]

In the case  of the Hilbert matrix operator on $A^p_\alpha$ we formulate this result as a part of the following interesting  theorem.

\begin{thm}\label{remarkable}   For the Hilbert matrix operator $\mathcal H: A^p_\alpha \to A^p_\alpha$, $p>2+\alpha\geq 2$, 
\[
\norm{\HH}_{e,  A^p_\alpha \to A^p_\alpha }=\int_0^1 \norm{w_t C_{\phi_t}}_{e, A^p_\alpha\to A^p_\alpha}dt %= B(\frac{2+\alpha}{p}, 1-\frac{2+\alpha}{p})
= \frac{\pi}{\sin\frac{(2+\alpha)\pi}{p}}.
\]
On the contrary, the norm 
$\norm{\HH}_{ A^p \to A^p}$ is strictly less than $\int_0^1 \norm{w_t C_{\phi_t}}_{A^p\to A^p} dt$ for some $p>2$.
\end{thm}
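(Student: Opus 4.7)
For the three-term equality, I would apply Theorem \ref{EssentialRaidusEqEssNorm} to each weighted composition operator $w_t C_{\phi_t}$ with $a = 1$, $\psi = w_t$, $\phi = \phi_t$, and $s = (2+\alpha)/p$. The M\"obius map $\phi_t$ is univalent with unique boundary point at which $\limsup \abs{\phi_t} = 1$, namely $1$, and has angular derivative $\phi_t'(1) = (1-t)/t$ there; the symbol $w_t$ extends analytically across a neighborhood of $\closed{\D}\setminus\{1/(1-t)\}$ with $w_t(1) = 1/t$. Hence
\[
\norm{w_t C_{\phi_t}}_{e,\,A^p_\alpha \to A^p_\alpha} = \frac{\abs{w_t(1)}}{\phi_t'(1)^s} = \frac{t^{s-1}}{(1-t)^s},
\]
and integration over $(0,1)$ gives $\beta(s,1-s) = \pi/\sin(s\pi) = \pi/\sin((2+\alpha)\pi/p)$, matching the value of $\norm{\HH}_{e,\,A^p_\alpha \to A^p_\alpha}$ computed in Section \ref{sec:Examples}. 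This proves the first line of the theorem.

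For the strict inequality I would rely on Bo\v{z}in--Karapetrovi\'c's sharp formula $\norm{\HH}_{A^p \to A^p} = \pi/\sin(2\pi/p)$ from \cite{BMBK}. Since $\norm{\cdot}_{A^p \to A^p}$ always dominates $\norm{\cdot}_{e,\,A^p \to A^p}$, the first part already yields $\int_0^1 \norm{w_t C_{\phi_t}}_{A^p \to A^p}\,dt \geq \pi/\sin(2\pi/p) = \norm{\HH}_{A^p \to A^p}$, so the task reduces to producing $p > 2$ on which this inequality is strict, which will follow from a pointwise strict inequality $\norm{w_t C_{\phi_t}}_{A^p \to A^p} > \norm{w_t C_{\phi_t}}_{e,\,A^p \to A^p}$ on a set of positive $t$-measure. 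I would test with $f \equiv 1$: since $\norm{1}_{A^p} = 1$ and $(w_t C_{\phi_t})\cdot 1 = w_t$, we get $\norm{w_t C_{\phi_t}}_{A^p \to A^p} \geq \norm{w_t}_{A^p}$.

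A standard polar-coordinate expansion followed by Gauss's ${}_2F_1$ summation gives
\[
\norm{w_t}_{A^p}^p = {}_2F_1\!\left(\tfrac{p}{2},\tfrac{p}{2};2;(1-t)^2\right) \sim \frac{\Gamma(p-2)\,2^{2-p}}{\Gamma(p/2)^2}\, t^{2-p} \qquad (t\to 0^+),
\]
while $\bigl(t^{s-1}/(1-t)^s\bigr)^p \to t^{2-p}$ in the same limit, so that $\norm{w_t}_{A^p}/\bigl(t^{s-1}/(1-t)^s\bigr) \to C_p := \bigl(\Gamma(p-2)\,2^{2-p}/\Gamma(p/2)^2\bigr)^{1/p}$ as $t \to 0^+$. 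Since $\Gamma(p-2) \to \infty$ as $p \to 2^+$ while the denominator remains bounded, $C_p > 1$ on some right-neighborhood $(2,p_0)$ of $2$, and a numerical evaluation of the gamma factors locates $p_0 \approx 2.7$. For such $p$, $\norm{w_t}_{A^p} > t^{s-1}/(1-t)^s$ on a right-neighborhood of $t=0$, hence
\[
\int_0^1 \norm{w_t C_{\phi_t}}_{A^p \to A^p}\,dt \geq \int_0^1 \max\!\left\{\norm{w_t}_{A^p},\,\frac{t^{s-1}}{(1-t)^s}\right\} dt > \int_0^1 \frac{t^{s-1}\,dt}{(1-t)^s} = \frac{\pi}{\sin(2\pi/p)},
\]
as required.

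The main obstacle is the hypergeometric identification of $\norm{w_t}_{A^p}^p$ as ${}_2F_1(p/2,p/2;2;(1-t)^2)$, the asymptotic analysis as $t \to 0^+$, and the verification that $C_p > 1$ on a nontrivial right-neighborhood of $p=2$ (explaining the range $(2,2.7)$ mentioned earlier). Everything else reduces to a direct application of Theorem \ref{EssentialRaidusEqEssNorm} together with Gauss's summation formula for ${}_2F_1$ at argument $1$.
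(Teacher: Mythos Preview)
Your proof is correct and follows the same overall strategy as the paper: the first part is identical (apply Theorem \ref{EssentialRaidusEqEssNorm} to $w_tC_{\phi_t}$ and integrate), and for the second part both you and the paper test $w_tC_{\phi_t}$ on $f\equiv 1$ and show that $\norm{w_t}_{A^p}^p$ strictly exceeds $\bigl(t^{2/p-1}/(1-t)^{2/p}\bigr)^p$ for small $t$ when $p$ is close to $2$.

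The only difference is the computation of the limiting constant. The paper performs the change of variables $w=\phi_t(z)$ to write $\norm{w_t}_{A^p}^p=\frac{t^{2-p}}{(1-t)^2}\,\B_t(1)$ with $\B_t(1)=\int_{\phi_t(\D)}\abs{z}^{p-4}\,dA(z)/\pi$, observes that $\B_t(1)$ is non-increasing in $t$, and evaluates the supremum as the explicit integral over $B(\tfrac12,\tfrac12)$, obtaining $\beta(\tfrac{p-1}{2},\tfrac12)/(\pi(p-2))$. You instead identify $\norm{w_t}_{A^p}^p={}_2F_1(p/2,p/2;2;(1-t)^2)$ and use the standard $x\to 1^-$ asymptotics to extract $C_p=\Gamma(p-2)2^{2-p}/\Gamma(p/2)^2$. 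These are the \emph{same} number: via the duplication formula and $\Gamma(p/2)=\tfrac{p-2}{2}\Gamma(\tfrac{p-2}{2})$ one checks $C_p=\Gamma(\tfrac{p-1}{2})/(\sqrt{\pi}(p-2)\Gamma(p/2))=\beta(\tfrac{p-1}{2},\tfrac12)/(\pi(p-2))$, which is why both approaches produce the same threshold $p_0\approx 2.703$. The paper's monotonicity argument gives strictness on all of $(0,t_p)$, while your asymptotic gives it on an unspecified right-neighborhood of $0$; either suffices for the strict integral inequality.
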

\begin{proof}
The first statement follows from the discussion above and the observation 
\begin{equation}\label{eq:EssNormOfHilbKern}
\norm{w_tC_{\phi_t}}_{e, A^p_\alpha \to A^p_\alpha} =  \frac{t^{\frac{2+\alpha}{p}-1}}{(1-t)^{\frac{2+\alpha}{p}}}. 
\end{equation}
It remains to prove the second statement of  Theorem  \ref{remarkable}. More precisely, we will show that regarding the Hilbert matrix operator $\mathcal H$  on $A^p$ we have
\[
\norm{w_t C_{\phi_t}}_{A^p \to A^p} > \frac{t^{\frac{2}{p}-1}  }{(1-t)^{\frac{2}{p}}}
\]
when $p$ is small enough $(<2.7)$ for $t \in (0,t_p)$ for some $t_p\in(0,1)$. After that, it follows from (\ref{eq:EssNormOfHilbKern}) that 
\[
\int_{t_p}^1\norm{w_tC_{\phi_t}}_{A^p_\alpha \to A^p_\alpha} dt \geq  \int_{t_p}^1 \frac{t^{\frac{2}{p}-1}}{(1-t)^{\frac{2}{p}} } dt
\] 
and hence, we obtain the strict inequality
\[
\norm{\HH}_{A^p} = \int_0^1 \frac{t^{\frac{2}{p}-1}  }{(1-t)^{ \frac{2}{p} }} dt  < \int_0^1 \norm{w_tC_{\phi_t}}_{A^p} dt.
\]
The upper bound on $p$ is obtained as follows.  We have 
\begin{align*}
\pi \norm{w_t C_{\phi_t}(f)}_{A^p} ^p &=  \int_\D \abs{w_t(z)}^p \abs{C_{\phi_t} (f)(z)}^p  dA(z) = \int_{\phi_t(\D)}  \frac{\abs{z}^p}{t^p} \abs{f(z)}^p \abs{({\phi_t^{-1}})'(z)}^2 dA(z)\\
&= \int_{\phi_t(\D)}  \frac{t^{2-p}}{(1-t)^2} \abs{z}^{p-4}\abs{f(z)}^p dA(z).
\end{align*}
Let us define
\[
\B_t :A^p \to \mathbb R, f\mapsto  \int_{\phi_t(\D)} \abs{z}^{p-4}\abs{f(z)}^p \frac{dA(z)}{\pi} , \ t\in (0,1), 
\]
and notice that for a fixed $f$ the function $\B_t(f)$ is non-increasing with respect to $t$ since   
\[
\phi_{t_2}(\D)\subset \phi_{t_1}(\D) ,  \ 0<t_1<t_2<1.
\] 
From the equality $\bigcup_{0<t<1} \phi_t(\D) = B(\frac{1}{2},\frac{1}{2})$ we now obtain 
\begin{align*}
\sup_{t\in(0,1)}\B_t(1) &= \int_{  B(\frac{1}{2},\frac{1}{2})  } \abs{z}^{p-4} \frac{dA(z)}{\pi}  = \int_0^1  \int\limits_{- \arccos r }^{     \arccos r }   r^{1+p-4}    d\theta \frac{dr}{\pi}\\
&= \left[   2\arccos r  \int_0^r s^{p-3}   \frac{ds}{\pi} \right]_{r=0}^1 + \frac{1}{p-2}\int_0^1   \frac{2}{\sqrt{   1-r^2   }}  r^{p-2} \frac{dr}{\pi}  =  \frac{ \beta (\frac{p-1}{2}, \frac{1}{2}) }{\pi(p-2)}.
\end{align*}
The zero, denoted by $p_0$, of the strictly decreasing function $h(p):= \frac{ \beta (\frac{p-1}{2}, \frac{1}{2}) }{\pi(p-2)} -1, \ p\in(2,\infty)$ can be  obtained numerically, $p_0\approx 2.703$ and for $p\in(2,p_0)$ we can now find a $t_p\in(0,1)$ such that
\[
\B_{t_p}(1)>1.
\]
We can conclude that $\B_{t}(1)>1$ holds true for $t\in(0,t_p)$, because $\B_t(1)$ is non-increasing. To finish the proof we observe that for $p\in(2,p_0)$ and $t\in(0,t_p)$ we have
\[
\frac{t^{\frac{2}{p}-1}}{(1-t)^{\frac{2}{p}} } < \frac{t^{\frac{2}{p}-1}}{(1-t)^{\frac{2}{p}} }\B_{t}(1)^{\frac{1}{p}} \leq \sup_{f\in B_{A^p}} \left(  \frac{t^{2-p}}{(1-t)^2 }\B_{t}(f) \right)^{\frac{1}{p}} = \sup_{f\in B_{A^p}} \norm{w_t C_{\phi_t}(f)}_{A^p}  =\norm{w_t C_{\phi_t}}_{A^p \to A^p}.
\]

\end{proof}

In  \cite{LMW} the authors  determined the exact  norm $\norm{\mathcal H}_{H^\infty_\alpha\to H^\infty_\alpha} = \pi/\sin(\alpha \pi)$,  when $ 0 < \alpha \le \frac{2}{3}$ with the weight $\BOAFW(z) = (1 - \abs{z}^2)^\alpha$. In the case $ \frac{2}{3}< \alpha <1$ a worse upper bound was obtained. Next, we show that $\norm{\mathcal H}_{H^\infty_{w_\alpha}\to H^\infty_{w_\alpha}} = \pi/\sin(\alpha \pi), 0 < \alpha < 1 $, where $w_\alpha$ is the equivalent weight  $w_\alpha(z) = (1 - \abs{z})^\alpha$. 
\begin{thm}\label{weightedspaceresult}  Let $0 < \alpha <1$ and consider the Hilbert matrix operator $\mathcal H: H^\infty_{w_\alpha}  \to H^\infty_{w_\alpha}$.
Then  
\[
\norm{\mathcal H}_{ H^\infty_{w_\alpha}  \to  H^\infty_{w_\alpha}} =\norm{\mathcal H}_{e, H^\infty_{w_\alpha}  \to  H^\infty_{w_\alpha}}= \frac{\pi}{\sin(\alpha \pi)}.
\]
\end{thm}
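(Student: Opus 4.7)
The plan is to sandwich $\norm{\mathcal H}_{H^\infty_{w_\alpha}\to H^\infty_{w_\alpha}}$ between its essential norm and a Minkowski-type upper bound, and show both equal $\pi/\sin(\alpha\pi)$.

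\textbf{Lower bound via essential norm.} Since the paper explicitly asserts that all prior essential-norm results carry over to the weight $w_\alpha(z)=(1-|z|)^\alpha$, I would invoke Corollary \ref{cor:EssentalNormOnH} to write
\[
\norm{\mathcal H}_{e, H^\infty_{w_\alpha}\to H^\infty_{w_\alpha}} = \int_0^1 \limsup_{z\to 1}\frac{w_\alpha(z)}{w_\alpha(\phi_t(z))}\, T_t(z)\,dt,
\]
where $T_t(z)=w_t(z)=1/(1-(1-t)z)$, so $\lim_{z\to 1}T_t(z)=1/t$. By the Julia--Carathéodory theorem, $\liminf_{z\to 1}(1-|\phi_t(z)|)/(1-|z|) = \phi_t'(1) = (1-t)/t$, hence $\limsup_{z\to 1}w_\alpha(z)/w_\alpha(\phi_t(z)) = (t/(1-t))^\alpha$. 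The integral becomes $\int_0^1 t^{\alpha-1}(1-t)^{-\alpha}\,dt = \beta(\alpha,1-\alpha) = \pi/\sin(\alpha\pi)$, giving the trivial lower bound $\norm{\mathcal H}\geq \norm{\mathcal H}_e = \pi/\sin(\alpha\pi)$.

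\textbf{Upper bound via the integral representation.} Starting from $\mathcal H(f)(z)=\int_0^1 w_t(z)f(\phi_t(z))\,dt$, Minkowski's inequality (for the $H^\infty_{w_\alpha}$-norm, viewed as a sup) yields
\[
\norm{\mathcal H}_{H^\infty_{w_\alpha}\to H^\infty_{w_\alpha}} \leq \int_0^1 \norm{w_t C_{\phi_t}}_{H^\infty_{w_\alpha}\to H^\infty_{w_\alpha}}\,dt.
\]
For the pointwise norm I would use the standard identity (for this class of multipliers) giving, for $f$ in the closed unit ball,
\[
|w_t(z)f(\phi_t(z))|\,w_\alpha(z) \leq \frac{|w_t(z)|\,w_\alpha(z)}{w_\alpha(\phi_t(z))} = \frac{(1-|z|)^\alpha}{|1-(1-t)z|^{1-\alpha}\bigl(|1-(1-t)z|-t\bigr)^\alpha},
\]
after using $1-|\phi_t(z)|=(|1-(1-t)z|-t)/|1-(1-t)z|$.

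\textbf{Computing the supremum (main technical step).} The crucial claim is that this quantity, as a function of $z\in\D$, is maximized along $z=r\in[0,1)$. Writing $s=|1-(1-t)z|$ for $z=re^{i\theta}$, the numerator $(1-r)^\alpha$ depends only on $r$, while the denominator $s^{1-\alpha}(s-t)^\alpha$ is a function of $s$ alone. A one-line derivative calculation,
\[
\frac{d}{ds}\bigl[s^{1-\alpha}(s-t)^\alpha\bigr] = s^{-\alpha}(s-t)^{\alpha-1}\bigl[s-(1-\alpha)t\bigr],
\]
is positive for all admissible $s>t$, so the expression decreases in $s$. Since $s$ is minimized (over the circle $|z|=r$) at $\theta=0$, yielding $s=1-(1-t)r$ and $s-t=(1-t)(1-r)$, the supremum collapses to
\[
\sup_{r\in[0,1)}\frac{1}{(1-(1-t)r)^{1-\alpha}(1-t)^\alpha} = \frac{1}{t^{1-\alpha}(1-t)^\alpha},
\]
attained as $r\to 1$. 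This is the step I expect to demand the most care: establishing the monotonicity rigorously for all $\alpha\in(0,1)$, and verifying that the radial direction genuinely dominates (which is why $w_\alpha$ behaves better here than $v_\alpha(z)=(1-|z|^2)^\alpha$, where algebraically the supremum need not lie on the diameter).

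\textbf{Conclusion.} Integrating the pointwise bound,
\[
\norm{\mathcal H}_{H^\infty_{w_\alpha}\to H^\infty_{w_\alpha}} \leq \int_0^1 \frac{dt}{t^{1-\alpha}(1-t)^\alpha} = \beta(\alpha,1-\alpha) = \frac{\pi}{\sin(\alpha\pi)},
\]
which matches the essential-norm lower bound. Hence
\[
\norm{\mathcal H}_{H^\infty_{w_\alpha}\to H^\infty_{w_\alpha}} = \norm{\mathcal H}_{e, H^\infty_{w_\alpha}\to H^\infty_{w_\alpha}} = \frac{\pi}{\sin(\alpha\pi)}.
\]
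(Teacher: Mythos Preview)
Your proposal is correct and follows essentially the same approach as the paper: the lower bound comes from Corollary~\ref{cor:EssentalNormOnH} (with $g(z)=(1-z)^{-\alpha}$ and the Julia--Carath\'eodory computation you give), and the upper bound comes from Minkowski plus the pointwise estimate $\norm{w_tC_{\phi_t}}_{H^\infty_{w_\alpha}\to H^\infty_{w_\alpha}}\leq t^{\alpha-1}(1-t)^{-\alpha}$ obtained by reducing to the radial direction. The only cosmetic difference is that the paper bounds $|1-(1-t)z|\geq 1-(1-t)|z|$ directly and observes that both factors $s^{1-\alpha}$ and $(s-t)^\alpha$ are trivially increasing in $s$, whereas you compute the derivative of their product; the resulting simplification $\frac{1-|z|}{1-(1-t)|z|-t}=\frac{1}{1-t}$ then makes the remaining supremum over $r$ immediate.
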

\begin{proof}
Let $0<\alpha<1$ and $g(z) = (1-z)^{-\alpha}$. Similarily to Section \ref{sec:Examples} it can be shown that conditions (\ref{eq:WeightCondUpperBoundWeakH}), (\ref{eq:WeightCondUpperBoundStrong}) and (\ref{eq:StrongCorrelationBetween_v_and_gH}) are satisfied. Since the norms $\norm{\cdot}_{H_{w_\alpha}^\infty}$ and $\norm{\cdot}_{H_\alpha^\infty}$ are equivalent, it is clear that (\ref{eq:EssentialNormBddH}) and (\ref{eq:ConditionForUpperBoundOfIKInfty}) also hold. Using Corollary \ref{cor:EssentalNormOnH} we now obtain
\[
 \norm{\mathcal H}_{H^\infty_{w_\alpha}  \to  H^\infty_{w_\alpha}}  \geq  \norm{\mathcal H}_{e, H^\infty_{w_\alpha}  \to  H^\infty_{w_\alpha}} = \frac{\pi}{\sin(\alpha \pi)}. 
\]

% For $z\in\D$, define the functions $f_\alpha(z) = (1-z)^{-\alpha}$ with  the  norm $||f_\alpha||_{H^\infty_{w_\alpha}}=1.$ Using the test functions $f_\alpha$, it follows that
% \[
%\norm{\mathcal H}_{ H^\infty_{w_\alpha}  \to  H^\infty_{w_\alpha}}\ge \norm{\mathcal H(f_\alpha)}_{ H^\infty_{w_\alpha}}\ge  \frac{\pi}{\sin(\alpha \pi)}.
%\]
However, for $\norm{f}_{ H^\infty_{w_\alpha}}\le 1,$ 
\begin{align*}
\norm{w_tC_{\phi_t}(f)}_{ H^\infty_{w_\alpha}}&= \sup_{z\in \D} \abs{w_t(z)} |f(\phi_t(z))| ( 1- |\phi_t(z)|)^\alpha  \Big(  \frac{1 - |z|}{ 1- |\phi_t(z)|  }   \Big)^\alpha
\le  \sup_{z\in \D} \abs{w_t(z)}  \Big(  \frac{1 - |z|}{ 1- |\phi_t(z)|  }   \Big)^\alpha \\
& \leq  \sup_{z\in \D} \Big(  \frac{1 }{ 1- (1-t)|z|}    \Big)^{1-\alpha} \Big(  \frac{1 - |z|}{ 1- (1-t)|z| -t }   \Big)^\alpha
=\frac{t^{\alpha-1}}{(1-t)^\alpha}.
\end{align*}
Therefore,
\[
\norm{\mathcal H}_{ H^\infty_{w_\alpha}  \to  H^\infty_{w_\alpha}}\le \int_0^1 \norm{w_t C_{\phi_t}}_{ H^\infty_{w_\alpha}}
=\int_0^1 \frac{t^{\alpha-1}}{(1-t)^\alpha} \ dt=\frac{\pi}{\sin(\alpha \pi)}.
\]

\end{proof}

It is still an open problem to determine the exact norm of  $\mathcal H$  on the weighted Bergman spaces for the standard weights when 
$2+\alpha < p < 2+\alpha +\sqrt{(2 + \alpha)^2-(\sqrt {2} - \frac{1}{2})(2 +\alpha)}.$ Therefore, it is of interest to  know when the norm and the essential norm of operators coincide  for such spaces. We conclude the article with a result stating that the pair $(A^p_v,A^q_v)$ has the weak maximizing property when $q\geq p >1$.

\begin{thm}\label{thm:WMP}
Let $T\in \mathcal L (A^p_v , A^q_v),\  q\geq p >1$%, where $A^p_v$ is a weighted Bergman space with weight $v:\D\to\mathbb R_{>0}$
. Then at  least one of the following holds: 
\begin{enumerate}
\item  There exists a sequence $(f_n)_n$ of unit vectors such that $f_n\to 0$ weakly (equivalently, in the topology of compact convergence) and
\[
\lim_{n\to \infty}\norm{   T(f_n)    }_{A^q_v} = \norm{T}_{  A^p_v \to A^q_v   }.
\]

\item There exists a function $f\in B_{A^p_v}$ such that
\[
\norm{   T(f)    }_{A^q_v} = \norm{T}_{  A^p_v \to A^q_v   }.
\]
\end{enumerate}
\end{thm}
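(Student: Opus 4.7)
The plan is to combine reflexivity of $A^p_v$ with two applications of the Brezis--Lieb type identity of Lemma~\ref{lem:AnLpIdentity}, one in $A^p_v$ and one in $A^q_v$, together with the elementary inequality $x^r+y^r\leq x+y$ for $x,y\in[0,1]$ and $r\geq 1$.

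Let $M=\norm{T}_{A^p_v\to A^q_v}$; the case $M=0$ is trivial, so assume $M>0$ and pick unit vectors $f_n\in A^p_v$ with $\norm{T f_n}_{A^q_v}\to M$. Reflexivity of $A^p_v$ lets us pass to a subsequence, still denoted $(f_n)$, with $f_n\to f$ weakly in $A^p_v$ for some $f\in B_{A^p_v}$. If $f=0$ then alternative (1) holds: weak convergence to $0$ of a bounded sequence in $A^p_v$ is equivalent to convergence in the topology of compact convergence, since point evaluations are bounded and such a sequence is a normal family in $\HOLO(\D)$.

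Assume henceforth $f\neq 0$. Because point evaluations belong to both $(A^p_v)^\ast$ and $(A^q_v)^\ast$, the weak convergences $f_n\to f$ in $A^p_v$ and $Tf_n\to Tf$ in $A^q_v$ yield pointwise convergence in $\D$. After a further subsequence so that the norm limits below exist, Lemma~\ref{lem:AnLpIdentity} gives
\begin{align*}
1 &= \norm{f}_{A^p_v}^p + \lim_{n\to\infty}\norm{f_n-f}_{A^p_v}^p, \\
M^q &= \norm{Tf}_{A^q_v}^q + \lim_{n\to\infty}\norm{T(f_n-f)}_{A^q_v}^q.
\end{align*}
Write $a=\norm{f}_{A^p_v}\in(0,1]$ and $b=\bigl(\lim_n\norm{f_n-f}_{A^p_v}^p\bigr)^{1/p}$, so that $a^p+b^p=1$. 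Feeding $\norm{Tf}_{A^q_v}\leq Ma$ and $\norm{T(f_n-f)}_{A^q_v}\leq M\norm{f_n-f}_{A^p_v}$ into the second identity yields $M^q\leq M^q(a^q+b^q)$.

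Since $a^p,b^p\in[0,1]$ and $q/p\geq 1$, one has $a^q+b^q=(a^p)^{q/p}+(b^p)^{q/p}\leq a^p+b^p=1$, so equality must hold throughout the chain; in particular $\norm{Tf}_{A^q_v}=Ma$. When $q=p$ the unit vector $f/a$ then attains the norm. When $q>p$ the equality $(a^p)^{q/p}+(b^p)^{q/p}=a^p+b^p$ with exponent strictly above $1$ forces $a^p,b^p\in\{0,1\}$; combined with $a>0$ and $a^p+b^p=1$ this gives $a=1$, so $f$ itself attains the norm. Alternative (2) therefore holds. The most delicate step is verifying the two Brezis--Lieb identities simultaneously along one subsequence, which rests on turning both weak convergences into pointwise ones via bounded point evaluations on the two spaces; the rest of the argument is purely algebraic manipulation of the resulting scalar identities.
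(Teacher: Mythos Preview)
Your proof is correct and follows the same approach as the paper: reflexivity to extract a weakly convergent maximizing subsequence, then Brezis--Lieb (Lemma~\ref{lem:AnLpIdentity}) applied in both $A^p_v$ and $A^q_v$, followed by the algebraic balancing argument. The paper in fact outsources the last step to \cite{PT}, whereas you spell out the computation $1\leq a^q+b^q\leq a^p+b^p=1$ explicitly; your case split $q=p$ versus $q>p$ is correct but unnecessary, since $\norm{Tf}_{A^q_v}=Ma$ already shows that $f/a$ attains the norm in either case.
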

\begin{proof}
The theorem holds if $\norm{T}_{  A^p_v \to A^q_v   }=0$. Assume now that $\norm{T}_{  A^p_v \to A^q_v   }>0$. Then $\norm{T}_{  A^p_v \to A^q_v   }= \lim_{n\to\infty}\norm{T(f_n)}_{A^q_v}$ for some sequence $(f_n)_n\subset A^p_v$ with $\norm{f_n}_{A^p_v}=1$ for all $n$. Since the closed unit ball $B_{A^p_v}, \ p>1$ of a reflexive space is weakly compact we may, by going to a subsequence, assume that the  sequence $(f_n)_n$ converges weakly   to some $f\in B_{A^p_v}$. Assume that $f\not\equiv 0$. Since $f_n-f\to 0$ weakly we have 
$T(f_n)\to T(f)$ weakly. Take $\epsilon>0$ and by Lemma \ref{lem:AnLpIdentity}, choose $N$ such that $n>N$ implies 
\[
 1- \norm{f_n-f}^p_{A^p_v} =  \norm{f_n}^p_{A^p_v} - \norm{f_n-f}^p_{A^p_v} \geq \norm{f}^p_{A^p_v} - \epsilon
\]
and
\[
  \norm{T(f_n)}^q_{A^q_v} - \norm{T(f_n)-T(f)}^q_{A^q_v} \leq  \norm{T(f)}^q_{A^q_v} + \epsilon.
\]
The rest of the proof will follow from similar computations as in \cite{PT}, which yield that the normalized weak limit $\frac{f}{\norm{f}_{A^p_v}}$ is a maximizing function for $T$.
\end{proof}

%\begin{cor}
%Using the terminology used in \cite{ref:MaximizingSeq1} we have proved that the pair $(A^p_v, A^q_v), \ q\geq p >1$ has the weak maximizing property, WMP. 
%\end{cor}

\begin{cor}
Let $p\geq \alpha+2\geq 2$ and consider $\HH:A^p_\alpha \to A^p_\alpha$. If $\HH$ does not attain its norm, that is, $\norm{\HH f}_{A^p_\alpha}<\norm{ \HH }_{  A^p_\alpha \to A^p_\alpha   }$ for all $f\in B_{A^p_\alpha}$. Then
$\norm{\HH}_{  A^p_\alpha \to A^p_\alpha   }=  \norm{\HH}_{e, A^p_\alpha \to A^p_\alpha   } = \frac{\pi}{\sin\frac{(2+\alpha)\pi}{p}} $.
\end{cor}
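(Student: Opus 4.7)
The plan is to combine Theorem \ref{thm:WMP} (the weak maximizing property for the pair $(A^p_\alpha,A^p_\alpha)$) with the standard fact that compact operators send weakly null sequences to norm null sequences, and then to identify the resulting common value via the essential norm formula already established in Section \ref{sec:Examples}.

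First, I would apply Theorem \ref{thm:WMP} with $T=\mathcal H$, $q=p$ and $v(z)=(1+\alpha)(1-|z|^2)^\alpha$. By hypothesis, $\mathcal H$ does not attain its norm on $B_{A^p_\alpha}$, so alternative (2) in Theorem \ref{thm:WMP} is excluded, and alternative (1) must hold. Hence there exists a sequence $(f_n)\subset A^p_\alpha$ with $\|f_n\|_{A^p_\alpha}=1$ for every $n$, $f_n\to 0$ weakly, and
\[
\lim_{n\to\infty}\|\mathcal H(f_n)\|_{A^p_\alpha}=\|\mathcal H\|_{A^p_\alpha\to A^p_\alpha}.
\]

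Second, I would use this weakly null sequence to pass to a lower bound on the essential norm. For every compact $L\in\mathcal L(A^p_\alpha)$, weak-to-norm continuity gives $\|L f_n\|_{A^p_\alpha}\to 0$, so by the reverse triangle inequality
\[
\|\mathcal H-L\|_{A^p_\alpha\to A^p_\alpha}\ge \limsup_{n\to\infty}\bigl(\|\mathcal H f_n\|_{A^p_\alpha}-\|L f_n\|_{A^p_\alpha}\bigr)=\|\mathcal H\|_{A^p_\alpha\to A^p_\alpha}.
\]
Taking the infimum over compact $L$ yields $\|\mathcal H\|_{e,A^p_\alpha\to A^p_\alpha}\ge \|\mathcal H\|_{A^p_\alpha\to A^p_\alpha}$, and since the reverse inequality is trivial (choose $L=0$), the two quantities coincide.

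Third, the identification of the common value is immediate from the Example in Section \ref{sec:Examples}, where it is shown that
\[
\|\mathcal H\|_{e,A^p_\alpha\to A^p_\alpha}=\frac{\pi}{\sin\frac{(2+\alpha)\pi}{p}}
\]
for $0\le\alpha<p-2$; the corollary's hypothesis $p\geq\alpha+2\geq 2$ (together with the boundedness of $\mathcal H$ on $A^p_\alpha$, which forces $p>2+\alpha$) places us in that range. There is no real obstacle in this argument — all the work has been done in Theorem \ref{thm:WMP} and in the Example — but the one point to double-check is that the range of exponents for which the essential norm was actually computed in Section \ref{sec:Examples} covers the range specified in the corollary, and that the weak-to-norm convergence step really only needs $f_n\to 0$ weakly in the reflexive space $A^p_\alpha$ together with compactness of $L$, both of which are standard.
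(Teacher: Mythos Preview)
Your proposal is correct and follows essentially the same route as the paper: invoke Theorem \ref{thm:WMP} to obtain a normalized weakly null maximizing sequence, use that compact operators send weakly null sequences to norm null sequences together with the triangle inequality to conclude $\norm{\HH}_{A^p_\alpha\to A^p_\alpha}\le \norm{\HH-L}_{A^p_\alpha\to A^p_\alpha}$ for every compact $L$, and then identify the resulting value via the essential norm computed in Section \ref{sec:Examples}. Your added remark about the range of exponents (that boundedness of $\HH$ forces $p>2+\alpha$, so one is indeed in the regime $0\le\alpha<p-2$ covered there) is a useful clarification the paper leaves implicit.
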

\begin{proof}
According to Theorem \ref{thm:WMP} we can assume that $(f_n)_n$ is a normalized maximizing weak null sequence for $\HH$. For a compact operator $L\in \mathcal L (A^p_\alpha , A^p_\alpha)$ we have
\[
\norm{\HH (f_n)}_{A^p_\alpha} \leq  \norm{ (\HH - L) (f_n)}_{A^p_\alpha} + \norm{ L (f_n)}_{A^p_\alpha} \leq \norm{\HH - L}_{  A^p_\alpha \to A^p_\alpha   } + \norm{ L (f_n)}_{A^p_\alpha}    
\]
Letting $n\to \infty$ we obtain $\norm{\HH }_{  A^p_\alpha \to A^p_\alpha   } \leq  \norm{\HH - L}_{  A^p_\alpha \to A^p_\alpha   } $ for all $L$, since compact operators transform weak null sequences into norm null sequences. Taking the infimum over compact operators $L$ we obtain the statement.
\end{proof}

\begin{rem}
It is clear that $\norm{T}_{A^p_v \to A^q_v   } = \norm{T}_{e, A^p_v \to A^q_v }$ holds also in the more general case when $T\in \mathcal L (A^p_v , A^q_v),\  q\geq p >1$ does not attain its norm. In particular, this result may also be useful in determining the norm of $\HH$ on $A^p_\alpha, p>2+\alpha$ when $-1<\alpha<0$.
\end{rem}

\subsection*{Acknowledgments} The first two authors were  partly supported by the Academy of Finland project 296718 and the second author also received support from the Emil Aaltonen Foundation.  The third author  is grateful for the financial support from the Doctoral Network in Information Technologies and Mathematics at \AA bo Akademi University.

\end{document}